\title{The KK-theory of Amalgamated Free Products}
\author{Pierre Fima and Emmanuel Germain}
\thanks{P.F. is partially supported by ANR grants OSQPI and NEUMANN.  E.G thanks CMI, Chennai for its support when part of this research was underway.}
\theoremstyle{plain}
\newtheorem{theorem}{Theorem}[section]
\newtheorem{proposition}[theorem]{Proposition}
\newtheorem{corollary}[theorem]{Corollary}
\newtheorem{lemma}[theorem]{Lemma}
\theoremstyle{definition}
\newtheorem{definition}[theorem]{Definition}
\newtheorem{example}[theorem]{Example}
\theoremstyle{remark}
\newtheorem{remark}[theorem]{Remark}
\DeclareMathOperator{\E}{E}
\newcommand{\EE}{\mathbb{E}}
\newcommand{\K}{\mathcal{K}}
\newcommand{\R}{\mathbb{R}}
\newcommand{\ot}{\otimes}
\newcommand{\id}{\text{id}}
\begin{document}

\begin{abstract}
In the presence of conditional expectations, we prove a long exact sequence in KK-theory for both the maximal and the vertex reduced amalgamated free product of unital C*-algebras that is valid even for non GNS-faithful conditional expectations. However, in the degenerated case, one has to introduce a new reduced amalgamated free product, that we call vertex-reduced. In the course of the proof we established the KK-equivalence between the full amalgamated free product and the vertex-reduced amalgamated free product. This results generalize and simplify the results obtained before by Germain and Thomsen. When the conditional expectations are extremely degenerated, i.e. when they are $*$-homomorphisms, our vertex-reduced amalgamated free product is isomorphic to the fiber direct sum. Hence our results also generalize a result of Cuntz.
\end{abstract}

\maketitle

\section{Introduction}

\noindent In 1982 J. Cuntz obtained a very elegant result about the full free product of unital C*-algebras with one-dimensional representations that leads to a conjectural long exact sequence for amalgamated free products in a general situation \cite{Cu82}. At about the same time M. Pimsner and D. Voiculescu computation of the $KK$-theory for some groups C*-algebras culminated in the computation of full and reduced crossed products by groups acting on trees \cite{Pi86} (or by the fundamental group of a graph of groups in Serre's terminology).  To go over the group situation has been difficult  and it relied heavily  on various generalizations of Voiculescu absorption theorem (see \cite{Th03} for the most general results in that direction).  Note also that G. Kasparov and G. Skandalis had another proof of Pimsner long exact sequence when studying KK-theory for buildings \cite{KS91}

\vspace{0.2cm}

\noindent  Section $2$ is a preliminary section in which we investigate the notion of reduced amalgamated free products of unital C*-algebras $A_1*_BA_2$ in the presence of non-necessary GNS-faithful conditional expectations. The usual reduced version, due to D. Voiculecscu, which is obtained by looking at the module over $B$, is often too small. Indeed, when the conditional expectations onto $B$ are both $*$-homomorphisms, the Voiculescu's reduced amalgamated free product is isomorphic to $B$ and all the information on $A_1$ and $A_2$ is lost. This is why we consider another reduced amalgamated free product, that we call vertex-reduced, which is obtained by looking at the two modules over $A_1$ and $A_2$ and is an intermediate quotient between the full amalgamated free product and the Voiculescu's reduced amalgamated free product. When the conditional expectations are GNS faithful, this two reduced amalgamated free products coincide and when the conditional expectations are $*$-homomorphisms the vertex reduced amalgamated free product is isomorphic to the fiber sum $A_1\oplus_B A_2$. Hence, even in the extreme degenerated case, the information on $A_1$ and $A_2$ is still contained in the vertex-reduced amalgamated free product. As the vertex-reduced free product is a new construction, we devote some time to show some of its properties.

\vspace{0.2cm}

\noindent Before proving our long exact sequence in KK-theory we start with an auxiliary and easy result in Section $3$. This result states that the full free product is always K-equivalent to  the vertex-reduced free product.
In particular, when the conditional expectations are morphisms, we get exactly Cuntz result \cite{Cu82}. This result also generalizes and simplifies the previous result obtained by the second author \cite{Ge96}. The proof is very natural, just a rotation trick. While finishing writing this paper, the authors have been aware that K. Hasegawa just obtained the same result in the particular case of GNS-faithful conditional expectations. By a remark by Ueda (\cite{Ue08}), this result also proves the K-equivalence between full and (vertex) reduced HNN extensions.

\vspace{0.2cm}

\noindent The main part and also the more difficult part of our paper comes in Section $4$. Under the presence of conditional expectation, we show that the full amalgamated free product $A_1\underset{B}{*} A_2$ with the algebra $D$ of continuous functions $f$ from $]-1,1[$ to the full free product such that $f(]-1,0])\subset A_1$, $f([0,1[)\subset A_2$ and $f(0)\in B$.  This is done by generalizing one of the author paper (\cite{Ge97}). Therefore the 
full amalgamated free product $A_1\underset{B}{*} A_2$ sits inside a long exact sequence for the computation of its $KK$-groups. Of course the vertex reduced free product has the same long exact sequence. Explicitly, if $C$ is any separable C*-algebra, then we have the two $6$-terms exact sequence,
$$\begin{array}{ccccc}
    KK^0(C,B)&\longrightarrow& KK^0(C,A_1)\bigoplus KK^0(C,A_2) &\longrightarrow &KK^0(C,A_1*_B A_2)\\
\uparrow& & & &\downarrow\\
KK^1(C,A_1*_BA_2)& \longleftarrow& KK^1(C,A_1)\bigoplus KK^1(C,A_2) &\longleftarrow& KK^1(C,B) \\
\end{array}
$$
and
$$\begin{array}{ccccc}
   KK^0(B,C)&\longleftarrow& KK^0(A_1,C)\bigoplus KK^0(A_2,C) &\longleftarrow& KK^0(A_1*_BA_2,C)\\
\downarrow& & & &\uparrow\\
KK^1(A_1*_BA_2,C)& \longrightarrow& KK^1(A_1,C)\oplus KK^1(A_2,C) &\longrightarrow&  KK^1(B,C) \\
\end{array}
$$

\noindent Again the HNN extension case follows using the isomorphism with an amalgamated free product. Note that this result greatly simplifies and generalizes the results of Thomsen \cite{Th03} about KK-theory for amalgamated free products which are valid only when the amalgam is finite dimensional.

\vspace{0.2cm}

\noindent Let us mention some applications. As a direct corollary, we obtain that the amalgamated free product of discrete quantum groups is $K$-amenable if and only if the initial quantum groups are K-amenable. This generalizes the result of Vergnioux \cite{Ve04} which was valid only for amenable discrete quantum groups and this also implies that a graph product of discrete quantum groups (see \cite{CF14}) is $K$-amenable if and only if the initial quantum groups are $K$-amenable. Finally, let us mention that this results will be applied in a future paper to deduce a long exact sequence in KK-theory for fundamental C*-algebras of graph of C*-algebras, generalizing and simplifying the results of Pimsner \cite{Pi86} and, as an application, the results of Fima-Freslon \cite{FF13}.

\section{Preliminaries}

\subsection{Notations and conventions} All C*-algebras and Hilbert modules are supposed to be separable. For a C*-algebra $A$ and a Hilbert $A$-module $H$ we denote by $\mathcal{L}_A(H)$ the C*-algebra of $A$-linear adjointable operators from $H$ to $H$ and by $\mathcal{K}_A(H)$ the sub-C*-algebra of $\mathcal{L}_A(H)$ consisting of $A$-compact operators. For $a\in A$, we denote by $L_A(a)\in\mathcal{L}_A(A)$ the left multiplication operator by $a$.

\subsection{Conditional expectations}\label{SectionCE}

Let $A$, $B$ be unital C*-algebras and $\varphi\,:\,A\rightarrow B$ be a unital completely positive map (ucp). A \textit{GNS construction} of $\varphi$ is a triple $(K,\rho,\eta)$, where $K$ is a Hilbert $B$-module, $\eta\in K$ and $\rho\,:\, A\rightarrow\mathcal{L}_B(K)$ is a unital $*$-homomorphism such that $K=\overline{\rho(A)\eta\cdot B}$ and $\langle\eta,\rho(a)\eta\rangle=\varphi(a)$ for all $a\in A$. A GNS construction always exists and is unique, up to a canonical isomorphism. Note that, if $B\subset A$ and $E\,:\,A\rightarrow B$ is a conditional expectation then, the Hilbert $B$-submodule $\eta\cdot B$ of $K$, where $(K,\rho,\eta)$ is a GNS construction of $E$, is complemented. Indeed, we have $K=\eta\cdot B\oplus K^\circ$, where $ K^\circ=\overline{\text{Span}}\{\rho(a)\eta\cdot b\,:\,a\in A^\circ\text{ and }b\in B\}$ and $A^\circ=\text{Ker}(E)$. Since $E$ is a conditional expectation onto $B$ we have $bA^\circ\subset A^\circ$ for all $b\in B$. It follows that $\rho(b)K^\circ\subset K^\circ$ for all $b\in B$. Hence, the restriction of $\rho$ to $B$ (and to $K^\circ$) gives a unital $*$-homomorphism $\rho\,:\,B\rightarrow\mathcal{L}_B(K^\circ)$.

\vspace{0.2cm}

\noindent A conditional expectation is called \textit{GNS-faithful} (or \textit{non-degenerate}) if for a given GNS construction (and hence for all GNS constructions) $(K,\rho,\eta)$, the homomorphism $\rho$ is faithful. In this paper we will consider reduced amalgamated free product with respect to non-necessary GNS-faithful conditional expectations. Actually, the degeneracy of the conditional expectations will naturally produce different types of reduced amalgamated free products. This is why we include the next proposition, which is well known to specialists but helps to understand the extreme degenerated case: when $E$ is an homomorphism. We include a complete proof for the convenience of the reader.

\begin{proposition}\label{PropDeg1} Let $B\subset A$ be a unital inclusion of unital C*-algebras and $E\,:\,A\rightarrow B$ be a conditional expectation with GNS construction $(K,\rho,\eta)$. The following are equivalent.
\begin{enumerate}
\item $E$ is an homomorphism.
\item $K\simeq B$ as Hilbert $B$-modules.
\item $K^\circ=\{0\}$.
\end{enumerate}
\end{proposition}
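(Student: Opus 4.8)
The plan is to prove the cycle of implications $(1)\Rightarrow(3)\Rightarrow(2)\Rightarrow(1)$; the first two are short computations with the GNS data $(K,\rho,\eta)$, and all the content sits in $(2)\Rightarrow(1)$, which I expect to be the delicate step. For $(1)\Rightarrow(3)$, assume $E$ is a $*$-homomorphism. For $a\in A^\circ=\text{Ker}(E)$, using $\langle\eta,\rho(x)\eta\rangle=E(x)$ together with multiplicativity and the fact that $E$ is $*$-preserving,
\[
\langle\rho(a)\eta,\rho(a)\eta\rangle=\langle\eta,\rho(a^*a)\eta\rangle=E(a^*a)=E(a)^*E(a)=0,
\]
so $\rho(a)\eta=0$, hence $\rho(a)\eta\cdot b=0$ for all $b\in B$. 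Since $K^\circ$ is by definition the closed linear span of these vectors, $K^\circ=\{0\}$.

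For $(3)\Rightarrow(2)$, if $K^\circ=\{0\}$ then $K=\eta\cdot B$, and the surjective right $B$-linear map $B\to K$, $b\mapsto\eta\cdot b$, is isometric because $\langle\eta b,\eta b'\rangle=b^*\langle\eta,\eta\rangle b'=b^*E(1)b'=b^*b'$; hence it is an isomorphism $B\simeq K$ of Hilbert $B$-modules. (A nearly identical argument yields $(3)\Rightarrow(1)$ directly, giving an alternative route: from $K=\eta\cdot B$ one gets $\rho(a)\eta=\eta\cdot E(a)$ by comparing $\langle\eta\cdot b,\,\cdot\,\rangle$ on both sides, and then $E(a^*b)=\langle\rho(a)\eta,\rho(b)\eta\rangle=E(a)^*E(b)$.)

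For $(2)\Rightarrow(1)$ the idea is to transport the GNS triple onto $B$ itself. Fix a Hilbert $B$-module isomorphism $\Phi\colon B\to K$ and set $\xi=\Phi(1)$; then $\langle\xi,\xi\rangle=1$, $\xi\cdot B=\Phi(B)=K$, and consequently $x\mapsto\xi\langle\xi,x\rangle$ equals $\id_K$. Because $K=\xi\cdot B$, each $\rho(a)\xi$ equals $\xi\cdot\pi(a)$ for a unique $\pi(a)\in B$, and one checks readily that $\pi\colon A\to B$ is a unital $*$-homomorphism. Writing $\eta=\xi\cdot c$ we obtain $c^*c=\langle\eta,\eta\rangle=1$; the relation $\rho(b)\eta=\eta\cdot b$ for $b\in B$ translates to $\pi(b)c=cb$; the cyclicity $\overline{\rho(A)\eta\cdot B}=K$ translates (using $cb=\pi(b)c$) to $\overline{\pi(A)c}=B$; and $E(a)=\langle\eta,\rho(a)\eta\rangle=c^*\pi(a)c$. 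The remaining point is to show that the isometry $c$ is in fact unitary — equivalently that $\eta\cdot B=K$, i.e. $K^\circ=\{0\}$: once this is known, $cc^*=1$ gives $E(ab)=c^*\pi(a)\pi(b)c=c^*\pi(a)cc^*\pi(b)c=E(a)E(b)$, so $E$ is a homomorphism. Establishing that $c$ is unitary is the step I expect to be the main obstacle; the data available are that $p:=cc^*$ is a projection commuting with $\pi(B)$ with $pc=c$, together with $\overline{\pi(A)c}=B$, from which one wants $p=1$. If one prefers, the cleanest way to run $(2)\Rightarrow(1)$ is to read it at the level of the full GNS triple: an isomorphism of $(K,\rho,\eta)$ with a triple in which the module is $B$, the cyclic vector is $1_B$, and $A$ acts by left multiplication through some $\psi\colon A\to B$ forces $\psi=\langle 1_B,\psi(\cdot)1_B\rangle=E$, and since the action $a\mapsto L_B(\psi(a))$ is a $*$-homomorphism and $L_B$ is an injective $*$-homomorphism, $\psi=E$ is a $*$-homomorphism.
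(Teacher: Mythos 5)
Your implications $(1)\Rightarrow(3)$ and $(3)\Rightarrow(2)$ are correct and essentially identical to what the paper does, and your parenthetical direct argument for $(3)\Rightarrow(1)$ (deduce $\rho(a)\eta=\eta\cdot E(a)$ from $K=\eta\cdot B$ and polarize) is a valid, slightly cleaner variant of the paper's route, which instead shows $E(a^*a)=E(a)^*E(a)$ for all $a\in A$ and invokes the multiplicative domain of the ucp map $E$.

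The problem is $(2)\Rightarrow(1)$, which you correctly flag as unfinished: you reduce it to showing that the isometry $c$ with $\eta=\xi\cdot c$ is unitary and you do not prove this, while your ``cleanest way'' at the end silently replaces hypothesis $(2)$ by the much stronger assumption that the isomorphism $K\simeq B$ carries $\eta$ to $1_B$ and intertwines $\rho$ with left multiplication. The gap cannot be closed under your literal reading of $(2)$ as an abstract isomorphism of Hilbert $B$-modules: all the data you extracted ($\pi$ a unital $*$-homomorphism, $c$ an isometry with $\pi(b)c=cb$ and $\overline{\pi(A)c B}=B$, $E=c^*\pi(\cdot)c$) can be realized with $cc^*\neq 1$. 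For instance, take $B=\mathcal{T}$ the Toeplitz algebra with generating isometry $v$, let $\phi$ be a character of $\mathcal{T}$ and $\sigma(b)=vbv^*+\phi(b)(1-vv^*)$, a unital endomorphism satisfying $\sigma(b)v=vb$; put $A=\mathcal{T}*_{\mathbb{C}}\mathcal{T}$ with $B$ the first copy, $\pi=\sigma*\mathrm{id}$ (which is onto $\mathcal{T}$), and $E=v^*\pi(\cdot)v$. Then $E$ is a conditional expectation whose GNS module is all of $B$ acting via $\pi$ with cyclic vector $\eta=v$, so $K\simeq B$ abstractly, yet $\eta\cdot B=vB$, $K^\circ\simeq(1-vv^*)B\neq\{0\}$, and $E$ is not multiplicative ($E(ww^*)=1\neq vv^*=E(w)E(w^*)$ for $w$ the generator of the second free factor). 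So $(2)$ must be read, as the paper implicitly does, as saying that the canonical isometry $b\mapsto\eta\cdot b$ is onto, i.e.\ $K=\eta\cdot B$; with that reading $(2)\Leftrightarrow(3)$ is immediate from the orthogonal decomposition $K=\eta\cdot B\oplus K^\circ$ (which is the entirety of the paper's argument for it), and the real content of the proposition is $(1)\Leftrightarrow(3)$, which you do have. I would restructure your write-up accordingly and drop the cycle through the abstract $(2)\Rightarrow(1)$.
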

 
\begin{proof}
Since $K=\eta\cdot B\oplus K^\circ$ the equivalence between $(2)$ and $(3)$ is obvious.
\vspace{0.2cm}

\noindent$(1)\Rightarrow(3)$. If $E$ is an homomorphism from $A$ to $B$ then, since $E$ is ucp, it is a unital $*$-homomorphism and we have for all $b\in B$ and all $a\in A^\circ$,
$$\langle\rho(a)\eta\cdot b,\rho(a)\eta\cdot b\rangle_{K}=b^*\langle\eta\cdot b,\rho(a^*a)\eta\rangle_{K}b=b^*E(a^*a)b=b^*E(a)^*E(a)b=0.$$

\noindent$(3)\Rightarrow(1)$. If $K^\circ=\{0\}$ then, for all $a\in A^\circ$, we have $E(a^*a)=\langle\rho(a)\eta,\rho(a)\eta\rangle_{K}=0$. Hence
$$E((a-E(a))^*(a-E(a)))=0=E(a^*a)-E(a^*)E(a)-E(a)^*E(a)+E(a)^*E(a)\quad\text{for all }a\in A.$$
It follows that, for all $a\in A$, we have $E(a^*a)=E(a)^*E(a)$. Hence, the multiplicative domain of the ucp map $E$ is equal to $A$ which implies that $E$ is an homomorphism.
\end{proof}

\subsection{The full and reduced amalgamated free products}

\noindent Let $A_1$, $A_2$ be two unital C*-algebras with a common C*-subalgebra $B\subset A_k$, $k=1,2$ and denote by $A_f$ the full amalgamated free product. To be more precise, we sometimes write $A_f=A_1\underset{B}{*}A_2$. It is well known that the canonical map from $A_k$ to $A_f$ is faithful for $k=1,2$. Hence, we will always view $A_1$ and $A_2$ as subalgebras of $A_f$.

\vspace{0.2cm}

\noindent We will now construct, in the presence of conditional expectations, two different reduced amalgamated free products. One of them, that we call the \textit{edge-reduced amalgamated free product} has been extensively studied and it is called, in the literature, the reduced amalgamated free product. The other one, that we call the \textit{vertex-reduced amalgamated free product}, does not seem to be known, even from specialists. As it will become gradually clear, the vertex-reduced amalgamated free product is actually much more natural than the edge-reduced amalgamated free product. It is an intermediate quotient of the full amalgamated free product and it is isomorphic to the edge-reduced amalgamated free product in the presence of GNS-faithful conditional expectations. This is the reason why it has not appear before in the literature since many authors only consider amalgamated free product in the presence of GNS-faithful conditional expectations. Since the vertex-reduced and the edge-reduced amalgamated free product are the foundations of our proofs we will now explain in great details their constructions.
 
 \vspace{0.2cm}
 
\noindent In the sequel, we always assume that, for $k=1,2$, there exists a conditional expectation $E_k\,:\, A_k\rightarrow B$. We write $A_k^\circ=\{a\in A_k\,:\, E_k(a)=0\}$, we denote by $(K_k,\rho_k,\eta_k)$ a GNS construction of $E_k$ and by $K_k^\circ$ the canonical orthogonal complement of $\eta_k\cdot B$ in $K_k$ as explain in section \ref{SectionCE}. Recall that the restriction of $\rho_k$ to $B$ (and to $K_k^\circ$) gives a unital $*$-homomorphism $\rho_k\,:\, B\rightarrow\mathcal{L}_B(K_k^\circ)$.

\vspace{0.2cm}

\noindent We denote by $I$ the subset of $\cup_{n\geq 1}\{1,2\}^n$ defined by
$$I=\{(i_1,\dots,i_n)\in\{1,2\}^n\,:\,n\geq 1\text{ and }i_k\neq i_{k+1}\text{ for all }1\leq k\leq n-1\},$$

\noindent Recall that an operator $x\in A_f$ is called \textit{reduced} if $x\neq 0$ and $x$ can be written as $x=a_1\dots a_n$ with $n\geq 1$ and $a_k\in A_{i_k}^\circ-\{0\}$ such that $\underline{i}=(i_1,\dots i_n)\in I$.
\subsubsection{The vertex-reduced amalgamated free products} For $\underline{i}=(i_1,\dots,i_n)\in I$, we define a $A_{i_1}$-$A_{i_n}$-bimodule $H_{\underline{i}}$. As Hilbert $A_{i_n}$-module we have:

$$H_{\underline{i}}=\left\{\begin{array}{lcl}K_{i_1}\underset{B}{\ot}K_{i_2}^\circ\underset{B}{\ot}\dots\underset{B}{\ot}K_{i_{n-1}}^\circ\underset{B}{\ot} A_{i_n}&\text{if}&n\geq 3,\\
K_{i_1}\underset{B}{\ot} A_{i_2}&\text{if}&n=2,\\
A_{i_1}&\text{if}&n=1.\end{array}\right.$$

\noindent The left action of $A_{i_1}$ on $H_{\underline{i}}$ is given by the unital $*$-homomorphism defined by
$$\lambda_{\underline{i}}\,:\,A_{i_1}\rightarrow\mathcal{L}_{A_{i_n}}(H_{\underline{i}});\quad\lambda_{\underline{i}}=\left\{\begin{array}{lcl}\rho_{i_1}\underset{B}{\ot}\id&\text{if}& n\geq 2,\\ L_{A_{i_1}}&\text{if}&n=1.\end{array}\right.$$
\noindent  We consider, for $k,l\in\{1,2\}$, the subset $I_{k,l}=\{\underline{i}=(i_1,\dots,i_n)\in I\,:\,i_1=k\text{ and }i_n=l\}$ and the $A_k$-$A_l$-bimodule defined by
$$H_{k,l}=\underset{\underline{i}\in I_{k,l}}{\bigoplus}H_{\underline{i}}\quad\text{and}\quad\lambda_{k,l}=\underset{\underline{i}\in I_{k,l}}{\bigoplus}\lambda_{\underline{i}}\,:\, A_k\rightarrow\mathcal{L}_{A_l}(H_{k,l}).$$

\noindent For $k\in\{1,2\}$ we denote by $\overline{k}$ the unique element in $\{1,2\}\setminus\{k\}$.

\begin{example}\label{ExDeg1}
If, for $k\in\{1,2\}$, $E_k$ is an homomorphism from $A_k$ to $B$ it follows from Proposition \ref{PropDeg1} that $K_k^\circ=\{0\}$. Hence, $H_{k,k}=A_k\oplus K_k\underset{B}{\ot} K_{\overline{k}}^\circ\underset{B}{\ot} A_k$ and $H_{\overline{k},k}=K_{\overline{k}}\underset{B}{\ot} A_k$. Note that, since $K_k\simeq B$, we have $H_{k,k}\simeq A_k\oplus K_{\overline{k}}^\circ\underset{B}{\ot} A_k\simeq K_{\overline{k}}\underset{B}{\ot} A_k= H_{\overline{k},k}$. Also we have $H_{k,\overline{k}}=K_k\underset{B}{\ot} A_{\overline{k}}$ and $H_{\overline{k},\overline{k}}=A_{\overline{k}}$. Again, $H_{k,\overline{k}}\simeq A_{\overline{k}}=H_{\overline{k},\overline{k}}$. Actually the isomorphism of Hilbert $A_l$-modules $H_{k,l}\simeq H_{\overline{k},l}$ is true in full generality as explained below.
\end{example}

\vspace{0.2cm}

\noindent For $k,l\in\{1,2\}$ we define a unitary $u_{k,l}\in\mathcal{L}_{A_l}(H_{k,l}, H_{\overline{k},l})$, by the following formula. Let $\underline{i}=(i_1,\dots,i_n)\in I$, with $i_1=k$ and $i_l=l$. For $\xi\in H_{\underline{i}}$ we define $u_{k,l}\xi\in H_{\overline{k},l}$ in the following way.
\begin{itemize}
\item If $n\geq 2$, write $\underline{i}=(k,\underline{i}')$, where $\underline{i}'=(i_2,\dots,i_n)\in I_{\overline{k},l}$. For $\xi=\rho_{k}(a)\eta_k\ot\xi'$, with $a\in A_k$ and $\xi'\in H_{\underline{i}'}$, we define $u_{k,l}\xi:=\left\{\begin{array}{lcl}\eta_{\overline{k}}\ot\xi&\text{if}&E_k(a)=0,\\
\lambda_{\underline{i}'}(a)\xi'&\text{if}&a\in B.\end{array}\right.$
\item If $n=1$ then $k=l$, $\underline{i}=(l)$ and $\xi\in A_l=H_{\underline{i}}$. We define $u_{k,l}\xi:=\eta_{\overline{k}}\ot \xi$.
\end{itemize}
It is easy to check that, for all $k,l\in\{1,2\}$, the operator $u_{k,l}$ commutes with the right actions of $A_l$ on $H_{k,l}$ and $H_{\overline{k},l}$ and extends to a unitary operators, still denoted $u_{k,l}$, in $\mathcal{L}_{A_l}(H_{k,l}, H_{\overline{k},l})$ such that $u_{k,l}^*=u_{\overline{k},l}$. Moreover, the definition of $u_{k,l}$ implies that,
\begin{equation}\label{EqVertexUnitary}u_{k,l}^*\lambda_{\overline{k},l}(b)u_{k,l}=\lambda_{k,l}(b)\quad\text{for all }b\in B.
\end{equation}

\begin{definition}
Let $k\in\{1,2\}$. The \textit{$k$-vertex-reduced amalgamated free product} is the C*-sub-algebra $A_{v,k}\subset\mathcal{L}_{A_k}(H_{k,k})$ generated by $\lambda_{k,k}(A_k)\cup u_{k,k}^*\lambda_{\overline{k},k}(A_{\overline{k}})u_{k,k}\subset\mathcal{L}_{A_k}(H_{k,k})$. To be more precise, we use sometimes the notation $A_{v,k}=A_1\overset{k}{\underset{B}{*}}A_2$.
\end{definition}

\noindent For a fixed $k\in\{1,2\}$ the relations $(\ref{EqVertexUnitary})$ imply the existence of a unique unital $*$-homomorphism $\pi_k\,:\,A_f\rightarrow A_{v,k}$ such that $\pi_k(a)=\left\{\begin{array}{lcl}\lambda_{k,k}(a)&\text{if}&a\in A_k,\\
u_{k,k}^*\lambda_{\overline{k},k}(a) u_{k,k}&\text{if}&a\in A_{\overline{k}}.\end{array}\right.$

\vspace{0.2cm}

\noindent In the sequel we will denote by $\xi_k$ the vector $\xi_k:=1_{A_k}\in A_k\subset H_{k,k}$. We summarize the fundamental properties of $A_{v,k}$ in the following proposition.

\begin{proposition}\label{PropkVertexReduced}
For all $k\in\{1,2\}$ the following holds.
\begin{enumerate}
\item The morphism $\pi_k$ is faithful on $A_k$.
\item If $E_{\overline{k}}$ is GNS-faithful then $\pi_k$ is faithful on $A_{\overline{k}}$.
\item There exists a unique ucp map $\EE_k\,:\, A_{v,k}\rightarrow A_k$ such that $\EE_k(\pi_k(a))=a$ $\forall a\in A_k$ and
$$\EE_k(\pi_k(a_1\dots a_n))=0\text{ for all }a=a_1\dots a_n\in A_f\text{ reduced with }n\geq 2\text{ or }n=1\text{ and }a=a_1\in A_{\overline{k}}^\circ.$$
\noindent Moreover, $\EE_k$ is GNS-faithful.
\item For any unital C*-algebra $C$ with unital $*$-homomorphisms $\nu_k\,:\,A_k\rightarrow C$ such that
\begin{itemize}
\item $\nu_1(b)=\nu_2(b)$ for all $b\in B$,
\item $C$ is generated, as a C*-algebra, by $\nu_1(A_1)\cup\nu_2(A_2)$,
\item $\nu_k$ is faithful and there exists a GNS-faithful ucp map $E\,:\,C\rightarrow A_k$ such that $E(\nu_k(a))=a$ for all $a\in A_k$ and 
$$E(\nu_{i_1}(a_1)\dots\nu_{i_n}(a_n))=0\text{ for all }a=a_1\dots a_n\in A_f\text{ reduced with }n\geq 2\text{ or }n=1\text{ and }a=a_1\in A_{\overline{k}}^\circ,$$
\end{itemize}
there exists a unique unital $*$-isomorphism $\nu\,:\,A_{v,k}\rightarrow C$ such that $\nu\circ\pi_k(a)=\nu_k(a)$ for all $a\in A_1\cup A_2$. Moreover, $\nu$ satisfies $E\circ\nu=\EE_k$.
\end{enumerate}
\end{proposition}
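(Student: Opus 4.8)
The plan is to take for $\EE_k$ the compression by the unit vector $\xi_k$: set $\EE_k(x):=\langle\xi_k,x\xi_k\rangle\in A_k$ for $x\in A_{v,k}\subset\mathcal{L}_{A_k}(H_{k,k})$. Since $\langle\xi_k,\xi_k\rangle=1_{A_k}$ this is automatically unital and completely positive, and items (1) and (2) are then immediate. For (1), $H_{(k)}=A_k$ is an orthogonal direct summand of $H_{k,k}$, and the restriction of $\pi_k|_{A_k}=\lambda_{k,k}|_{A_k}$ to it is $L_{A_k}$, which is faithful. For (2), if $E_{\overline k}$ is GNS-faithful then $\rho_{\overline k}$ is faithful; as $\pi_k|_{A_{\overline k}}$ is unitarily conjugate to $\lambda_{\overline k,k}|_{A_{\overline k}}$, and the latter has $\lambda_{(\overline k,k)}=\rho_{\overline k}\ot\id$ on the summand $H_{(\overline k,k)}=K_{\overline k}\ot_B A_k$ as a direct block, it suffices to observe that $\rho_{\overline k}(a)\ot\id_{A_k}=0$ forces $\langle\zeta,\rho_{\overline k}(a)\zeta'\rangle_{K_{\overline k}}=0$ in $B\subset A_k$ for all $\zeta,\zeta'\in K_{\overline k}$ (test against $\zeta\ot 1_{A_k}$ and $\zeta'\ot 1_{A_k}$ and use that $B\hookrightarrow A_k$ is injective), hence $\rho_{\overline k}(a)=0$, hence $a=0$.

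The heart of (3) is the moment computation. Write $H_{k,k}=\bigoplus_{p\ge 0}H^{(p)}$, where $H^{(p)}:=H_{\underline i}$ for $\underline i=(k,\overline k,k,\dots,k)$ of length $2p+1$ (these exhaust $I_{k,k}$), so $H^{(0)}=A_k$; for $p\ge 1$ split $H^{(p)}=H^{(p)}_E\oplus H^{(p)}_\circ$ according to whether the leftmost tensorand, which lies in $K_{i_1}=K_k$, lies in $\eta_k\cdot B$ or in $K_k^\circ$. The core lemma, proved by induction on the length $n$ of a reduced word $b=b_1\cdots b_n$ (with $b_j\in A_{i_j}^\circ$), is: (a) if $b_1\in A_k^\circ$ then $\pi_k(b)\xi_k\in\bigoplus_{p\ge 1}H^{(p)}_\circ$, except that for $n=1$ it equals $b_1\in H^{(0)}$; (b) if $b_1\in A_{\overline k}^\circ$ then $\pi_k(b)\xi_k\in\bigoplus_{p\ge 1}H^{(p)}_E$. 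Removing the first letter of $b$ lands one in the other case for the shorter word, so (a) and (b) feed each other. The only facts needed to run the induction, each a direct unwinding of the definitions of $\lambda_{k,k}$, $u_{k,k}$ and $\eta_k$, are: (i) for $a\in A_{\overline k}^\circ$ and $c\in H^{(0)}=A_k$ one has $\pi_k(a)c=\eta_k\ot\rho_{\overline k}(a)\eta_{\overline k}\ot c\in H^{(1)}_E$; (ii) for $a\in A_k^\circ$, $\lambda_{k,k}(a)=\rho_k(a)\ot\id$ maps $H^{(p)}_E$ into $H^{(p)}_\circ$ since $\rho_k(a)\eta_k\in K_k^\circ$; (iii) for $a\in A_{\overline k}^\circ$ and $v\in H^{(p)}_\circ$ with $p\ge 1$, $\pi_k(a)v=\eta_k\ot\rho_{\overline k}(a)\eta_{\overline k}\ot v\in H^{(p+1)}_E$. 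The point of the refinement into $H^{(p)}_E$ versus $H^{(p)}_\circ$ is precisely that only these three "pure creation / flip" moves ever occur — never a mixed annihilation–diagonal move — so the induction closes cleanly. Granting the lemma, $\EE_k(\pi_k(b))=\langle\xi_k,\pi_k(b)\xi_k\rangle=0$ whenever $n\ge 2$, or $n=1$ with $b\in A_{\overline k}^\circ$ (the vector then has no $H^{(0)}$-component), while $\EE_k(\pi_k(a))=\langle\xi_k,a\rangle=a$ for $a\in A_k$ because $\lambda_{k,k}(a)\xi_k=a\cdot 1_{A_k}=a\in H^{(0)}$. Uniqueness of $\EE_k$ is then formal: $\pi_k(A_f)$ is a C*-subalgebra of $A_{v,k}$ containing the generators $\pi_k(A_1)\cup\pi_k(A_2)$, hence $\pi_k$ is onto $A_{v,k}$; since $A_f$ is the closed linear span of $B$ together with the reduced operators, $A_{v,k}$ is the closed linear span of $\pi_k(A_k)$, $\pi_k(A_{\overline k}^\circ)$ and the $\pi_k$ of reduced words of length $\ge 2$, and the stated normalization determines any ucp map on this set.

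It remains to prove $\EE_k$ is GNS-faithful. The GNS module of $\EE_k$ identifies isometrically and $A_{v,k}$-equivariantly with the closed submodule $\overline{A_{v,k}\,\xi_k\,A_k}\subset H_{k,k}$ via $x\ot a\mapsto x\xi_k a$ (this map preserves the semi-inner products because $\langle x\xi_k a,y\xi_k a'\rangle_{H_{k,k}}=a^*\EE_k(x^*y)a'$); since $A_{v,k}$ is by construction a C*-subalgebra of $\mathcal{L}_{A_k}(H_{k,k})$, GNS-faithfulness of $\EE_k$ is equivalent to $\overline{A_{v,k}\,\xi_k\,A_k}=H_{k,k}$. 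Now $\overline{A_{v,k}\,\xi_k\,A_k}$ is stable under $\pi_k(A_1)$, $\pi_k(A_2)$ and under right multiplication by $A_k$, and it contains $H^{(0)}=A_k=\pi_k(A_k)\xi_k$. Running moves (i)–(iii) forwards — using that $\{\rho_{\overline k}(a)\eta_{\overline k}\cdot b:a\in A_{\overline k}^\circ,\ b\in B\}$ spans a dense subset of $K_{\overline k}^\circ$, and similarly for $K_k^\circ$ — one obtains by induction on $p$ that every $H^{(p)}_E$ and every $H^{(p)}_\circ$ lies in $\overline{A_{v,k}\,\xi_k\,A_k}$: by (i) and (iii) together with $A_k$-linearity one passes from $H^{(p)}_\circ$ (and from $H^{(0)}$) to $H^{(p+1)}_E$, and by (ii) one passes from $H^{(p)}_E$ to $H^{(p)}_\circ$. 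Hence $\overline{A_{v,k}\,\xi_k\,A_k}=H_{k,k}$, finishing (3).

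For (4), given $C$, $\nu_1,\nu_2$ and $E$ as in the statement, the universal property of $A_f$ yields a unital $*$-homomorphism $\widetilde\nu\colon A_f\to C$ extending $\nu_1$ and $\nu_2$, and it is onto since $\nu_1(A_1)\cup\nu_2(A_2)$ generates $C$. The composite $E\circ\widetilde\nu\colon A_f\to A_k$ satisfies $E(\widetilde\nu(a))=a$ for $a\in A_k$ and vanishes on all reduced words of length $\ge 2$ and on $A_{\overline k}^\circ$, by the hypotheses on $E$; hence it agrees with $\EE_k\circ\pi_k$ on $A_k$, on $A_{\overline k}^\circ$ and on reduced words of length $\ge 2$, and therefore, both maps being ucp (so continuous) and $A_f$ being the closed span of these, $E\circ\widetilde\nu=\EE_k\circ\pi_k$ on all of $A_f$. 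Since $\widetilde\nu$ and $\pi_k$ are surjective and $E$, $\EE_k$ are GNS-faithful, the GNS representation of $E\circ\widetilde\nu$ is canonically $\sigma_E\circ\widetilde\nu$ and that of $\EE_k\circ\pi_k$ is $\sigma_{\EE_k}\circ\pi_k$, with $\sigma_E$ and $\sigma_{\EE_k}$ faithful; by uniqueness of the GNS construction these two representations have equal kernels, so $\ker\widetilde\nu=\ker(\sigma_E\circ\widetilde\nu)=\ker(\sigma_{\EE_k}\circ\pi_k)=\ker\pi_k$. Thus $\widetilde\nu$ and $\pi_k$ are surjections with the same kernel, yielding a unique $*$-isomorphism $\nu\colon A_{v,k}\to C$ with $\nu\circ\pi_k=\widetilde\nu$, so $\nu(\pi_k(a))=\nu_k(a)$ on $A_1\cup A_2$; uniqueness of such a $\nu$ holds because $\pi_k(A_1\cup A_2)$ generates $A_{v,k}$, and $E\circ\nu=\EE_k$ follows from $E\circ\nu\circ\pi_k=E\circ\widetilde\nu=\EE_k\circ\pi_k$ by surjectivity of $\pi_k$. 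The one genuinely delicate point in all of this is the case bookkeeping in the core lemma of the second paragraph; everything else is formal or a routine unwinding of the module structures.
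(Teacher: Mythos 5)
Your proposal is correct and follows essentially the same route as the paper: define $\EE_k$ as the compression $x\mapsto\langle\xi_k,x\xi_k\rangle$, compute $\pi_k(a_1\cdots a_n)\xi_k$ explicitly for reduced words (your inductive bookkeeping with $H^{(p)}_E$ versus $H^{(p)}_\circ$ is just an organized derivation of the paper's closed formula for these vectors), deduce the moment conditions and the cyclicity $\overline{A_{v,k}\xi_k\cdot A_k}=H_{k,k}$, and obtain $(4)$ from uniqueness of the GNS construction. The only (harmless) local variants are in $(2)$, where you read off faithfulness from the direct summand $K_{\overline{k}}\underset{B}{\ot}A_k$ instead of using the identity $\EE_k\circ\pi_k\vert_{A_{\overline{k}}}=E_{\overline{k}}$, and in $(4)$, where you compare kernels of GNS representations rather than writing the intertwining unitary explicitly.
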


\begin{proof} By definition of $\pi_k$ we have, if $a\in A_k$, $\langle\xi_k,\pi_k(a)\xi_k\rangle=a$. It follows directly that $\pi_k$ is faithful on $A_k$. Moreover, the map $\EE_k\,:\,A_{v,k}\rightarrow A_k$, $x\mapsto\langle\xi_k,x\xi_k\rangle$ satisfies $\EE_k(\pi_k(a))=a$ $\forall a\in A_k$. By the definition of the unitaries $u_{k,l}$ we have, for all $k\in\{1,2\}$ and all reduced operator $x=a_1\dots a_n$ with $a_k\in A_k^\circ$ and $\underline{i}=(i_1,\dots,i_n)\in I$,
\begin{equation}\label{EqGNS}\pi_k(a_1\dots a_n)\xi_k=\left\{\begin{array}{lcl}\rho_{i_1}(a_1)\eta_1\ot\dots\ot\rho_{i_{n-1}}(a_{n-1})\eta_{n-1}\ot a_n&\text{if}&i_1= k\text{ and }i_n=k,\\
\eta_{k}\ot\rho_{i_1}(a_1)\eta_1\ot\dots\ot\rho_{i_{n-1}}(a_{n-1})\eta_{n-1}\ot a_n&\text{if}&i_1\neq k\text{ and }i_n=k,\\
\rho_{i_1}(a_1)\eta_1\ot\dots\ot\rho_{i_{n}}(a_{n})\eta_{n}\ot 1_{A_k}&\text{if}&i_1= k\text{ and }i_n\neq k,\\
\eta_{k}\ot\rho_{i_1}(a_1)\eta_1\ot\dots\ot\rho_{i_{n}}(a_{n})\eta_{n}\ot 1_{A_k}&\text{if}&i_1\neq k\text{ and }i_n\neq k.\\
\end{array}\right.\end{equation}

\vspace{0.2cm}

\noindent Hence we have $\EE_k(\pi_k(a_1,\dots a_n))=0$ for all $a=a_1\dots a_n\in A_f$ reduced with $n\geq 2$ or $n=1$ and $a=a_1\in A_{\overline{k}}^\circ$. It also follows easily from the previous set of equations that $\overline{\pi_k(A_f)\xi_k\cdot A_k}=H_{k,k}$. Hence the triple $(H_{k,k},\id,\xi_k)$ is a GNS construction for $\EE_k$. This shows that $\EE_k$ is GNS-faithful. Note that the uniqueness statement of the third assertion is obvious since $A_f$ is the linear span of $B$ and the reduced operators. Also, the second statement becomes now obvious since, by the properties of $\EE_k$ we have, for all $x\in A_{\overline{k}}$, $\EE_k(\pi_k(x))=\EE_k(\pi_k(x-E_{\overline{k}}(x)))+\EE_k(\pi_k(E_{\overline{k}}(x)))=\pi_k(E_{\overline{k}}(x))$. It follows easily from this equation that $\pi_k$ is faithful on $A_{\overline{k}}$ whenever $E_{\overline{k}}$ is GNS-faithful. Indeed, let $x\in A_{\overline{k}}$ such that $\pi_k(a)=0$. Then, for all $y\in A_{\overline{k}}$ we have $\pi_k(y^*x^*xy)=0$. Hence, $\pi_k\circ E_{\overline{k}}(y^*x^*xy)=\EE_k\circ\pi_k(y^*x^*xy)=0$ for all $y\in A_{\overline{k}}$. Since $\pi_k$ is faithful on $A_k$ we find $E_{\overline{k}}(y^*x^*xy)=0$, for all $y\in A_{\overline{k}}$. Since $E_{\overline{k}}$ is GNS-faithful we conclude that $x=0$. 
\vspace{0.2cm}

\noindent $(4)$. The proof is a routine. We write the argument for the convenience of the reader. Let $(K,\rho,\eta)$ be the GNS construction of $E$. Since $E$ is GNS-faithful we may and will assume that $\rho=\id$ and $C\subset\mathcal{L}_{A_k}(K)$. By the properties of $\EE_k$ and $E$, the map $U\,:\,H_{k,k}\rightarrow K$ defined by, for $x=a_1\dots a_n\in A_f$ reduced with $a_k\in A_{i_k}^\circ$, $U(\pi_k(x)\xi_k):=\nu_{i_1}(a_1)\dots\nu_{i_n}(a_n)\eta$ and, for $x=b\in B$, $U(\pi_k(b)\xi_k)=\nu_1(b)\eta=\nu_2(b)\eta$, is well defined and extends to a unitary $U\in\mathcal{L}_{A_k}(H_{k,k},K)$. By construction, the map $\nu(x):=UxU^*$, for $x\in A_{v,k}$, satisfies the claimed properties. The uniqueness is obvious.
\end{proof}

\begin{remark}
It is known that the canonical homomorphism from $A_k$ to $A_f$ is faithful for $k\in\{1,2\}$ without assuming the existence of conditional expectations from $A_k$ to $B$. However, assertion $(1)$ of Proposition \ref{PropkVertexReduced} gives a very simple proof of this fact, since it shows that the composition of the canonical homomorphism from $A_k$ to $A_f$ with the homomorphism $\pi_k$ is faithful, which implies that the canonical homomorphism from $A_k$ to $A_f$ itself is faithful.
\end{remark}

\begin{example}\label{ExDeg2}
Suppose that, for a given $k\in\{1,2\}$, $E_k$ is an homomorphism. Then, as observed in Example \ref{ExDeg1}, we have
$H_{\overline{k},\overline{k}}=A_{\overline{k}}$ (and $\lambda_{\overline{k},\overline{k}}=L_{A_{\overline{k}}}$). It follows from the definition of $\pi_{\overline{k}}$ that
$$\pi_{\overline{k}}(a)=\left\{\begin{array}{lcl}L_{A_{\overline{k}}}(a)&\text{if}&a\in A_{\overline{k}},\\
0&\text{if}&a\in A_k^\circ.\end{array}\right.$$
Hence, since $A_f$ the closed linear span of $A_{\overline{k}}$ and the reduced operators and $\pi_{\overline{k}}\,:\,A_f\rightarrow A_{v,\overline{k}}$ is surjective, we find that $A_{v,\overline{k}}=\pi_{\overline{k}}(A_{\overline{k}})$. Moreover, since $\pi_{\overline{k}}$ is faithful on $A_{\overline{k}}$ we conclude that the restriction of $\pi_{\overline{k}}$ to $A_{\overline{k}}$ gives an isomorphism $A_{\overline{k}}\simeq A_{v,\overline{k}}$. 
\end{example}

\begin{definition}
The \textit{vertex-reduced amalgamated free product} is the C*-algebra obtained by separation and completion of $A_f$ with respect to the C*-semi-norm $\Vert\cdot\Vert_v$ on $A_f$ defined by
$$\Vert x\Vert_v:=\text{Max}\{\Vert\pi_1(x)\Vert,\Vert\pi_2(x)\Vert\}\quad\text{for all }x\in A_f.$$
\end{definition}

\noindent We will note it $A_1\overset{v}{\underset{B}{*}} A_2$ or $A_v$ for simplicity  in the rest of this section and let $\pi\,:\,A_f\rightarrow A_v$ be the canonical surjective unital $*$-homomorphism. Note that, by construction of $A_v$, for all $k\in\{1,2\}$, there exists a unique unital (surjective) $*$-homomorphism $\pi_{v,k}\,:\,A_v\rightarrow A_{v,k}$ such that $\pi_{v,k}\circ\pi=\pi_k$. We describe the fundamental properties of the vertex-reduced amalgamated free product in the following proposition. We call a family of ucp maps $\{\varphi_i\}_{i\in I}$, $\varphi_i\,:\,A\rightarrow B_i$ GNS-faithful if $\cap_{i\in I}{\rm Ker}(\pi_i)=\{0\}$, where $(H_i,\pi_i,\xi_i)$ is a GNS-construction for $\varphi_i$. From Proposition \ref{PropkVertexReduced} and the definition of $A_v$ we deduce the following result.

\begin{proposition}\label{PropVertexReduced}
The following holds.
\begin{enumerate}
\item $\pi$ is faithful on $A_k$ for all $k\in\{1,2\}$.
\item For all $k\in\{1,2\}$, there is a unique ucp map $\EE_{A_k}\,:\,A_v\rightarrow A_k$ such that $\EE_{A_k}\circ\pi(a)=a$ for all $a\in A_k$ and all $k\in\{1,2\}$ and,
$$\EE_{A_k}(\pi(a_1\dots a_n))=0\text{ for all }a=a_1\dots a_n\in A_f\text{ reduced with }n\geq 2\text{ or }n=1\text{ and }a=a_1\in A_{\overline{k}}^\circ.$$
\noindent Moreover, the family $\{\EE_{A_1},\EE_{A_2}\}$ is GNS-faithful.
\item Suppose that $C$ is a unital C*-algebra with $*$-homomorphisms $\nu_k\,:\,A_k\rightarrow C$ such that
\begin{itemize}
\item $\nu_1(b)=\nu_2(b)$ for all $b\in B$,
\item $C$ is generated, as a C*-algebra, by $\nu_1(A_1)\cup\nu_2(A_2)$,
\item $\nu_1$ and $\nu_2$ are faithful and, for all $k\in\{1,2\}$, there exists a ucp map $E_{A_k}\,:\,C\rightarrow A_k$ such that $E_{A_k}\circ\nu_k(a)=a$ for all $a\in A_k$ and all $k\in\{1,2\}$ and,
$$E_{A_k}(\nu_{i_1}(a_1)\dots\nu_{i_n}(a_n))=0\text{ for all }a=a_1\dots a_n\in A_f\text{ reduced with }n\geq 2\text{ or }n=1\text{ and }a=a_1\in A_{\overline{k}}^\circ,$$
and the family $\{E_{A_1},E_{A_2}\}$ is GNS-faithful.
\end{itemize}
Then, there exists a unique unital $*$-isomorphism $\nu\,:\,A_{v}\rightarrow C$ such that $\nu\circ\pi(a)=\nu_k(a)$ for all $a\in A_k$ and all $k\in\{1,2\}$. Moreover, $\nu$ satisfies $E_{A_k}\circ\nu=\EE_{A_k}$, $k\in\{1,2\}$.
\end{enumerate}
\end{proposition}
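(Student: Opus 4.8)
The plan is to deduce Proposition \ref{PropVertexReduced} from Proposition \ref{PropkVertexReduced} applied to both $k=1$ and $k=2$, using the definition of $A_v$ as the completion of $A_f$ for the norm $\Vert x\Vert_v=\max\{\Vert\pi_1(x)\Vert,\Vert\pi_2(x)\Vert\}$ and the factorization $\pi_k=\pi_{v,k}\circ\pi$.

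\smallskip

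\noindent\textbf{Assertion (1).} Since $\pi_k=\pi_{v,k}\circ\pi$ and $\pi_k$ is faithful on $A_k$ by Proposition \ref{PropkVertexReduced}(1), the composition $\pi_{v,k}\circ\pi$ is injective on $A_k$; hence $\pi$ itself is injective on $A_k$. (Equivalently, for $a\in A_k$, $\Vert\pi(a)\Vert_v\geq\Vert\pi_k(a)\Vert=\Vert a\Vert$.)

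\smallskip

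\noindent\textbf{Assertion (2).} For each $k$, set $\EE_{A_k}:=\EE_k\circ\pi_{v,k}\,:\,A_v\rightarrow A_k$, where $\EE_k\,:\,A_{v,k}\rightarrow A_k$ is the GNS-faithful ucp map from Proposition \ref{PropkVertexReduced}(3). It is ucp as a composition of ucp maps. For $a\in A_k$ we get $\EE_{A_k}\circ\pi(a)=\EE_k\circ\pi_{v,k}\circ\pi(a)=\EE_k\circ\pi_k(a)=a$, and for a reduced word $a=a_1\dots a_n$ with $n\geq2$, or $n=1$ and $a_1\in A_{\overline k}^\circ$, the same computation gives $\EE_{A_k}(\pi(a))=\EE_k(\pi_k(a))=0$. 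Uniqueness is clear because $A_f$ is the closed linear span of $B$ and the reduced operators, and $\pi(A_f)$ is dense in $A_v$. For the GNS-faithfulness of the family $\{\EE_{A_1},\EE_{A_2}\}$: let $(H_{k,k},\id,\xi_k)$ be the GNS construction of $\EE_k$ identified in the proof of Proposition \ref{PropkVertexReduced}(3); then $(H_{k,k},\pi_{v,k},\xi_k)$ is a GNS construction for $\EE_{A_k}$, since $\EE_{A_k}(x)=\langle\xi_k,\pi_{v,k}(x)\xi_k\rangle$ and $\overline{\pi_{v,k}(A_v)\xi_k\cdot A_k}=\overline{\pi_k(A_f)\xi_k\cdot A_k}=H_{k,k}$. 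Thus $\mathrm{Ker}$ of the GNS representation of $\EE_{A_k}$ equals $\mathrm{Ker}(\pi_{v,k})$, and $\mathrm{Ker}(\pi_{v,1})\cap\mathrm{Ker}(\pi_{v,2})=\{0\}$ because an element of $A_v$ killed by both $\pi_{v,1}$ and $\pi_{v,2}$ has $\Vert\cdot\Vert_v$-norm zero (this is exactly the definition of $\Vert\cdot\Vert_v$, together with the fact that $\pi_{v,k}$ is the extension of $\pi_k$ to $A_v$).

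\smallskip

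\noindent\textbf{Assertion (3).} Given $C$ and $\nu_k\,:\,A_k\rightarrow C$ with the stated properties, apply Proposition \ref{PropkVertexReduced}(4) \emph{for each $k$} with the data $(C,\nu_1,\nu_2,E_{A_k})$: since $\{E_{A_1},E_{A_2}\}$ is GNS-faithful we cannot directly invoke (4), whose hypothesis is that a \emph{single} $E$ is GNS-faithful, so instead I factor through the $A_{v,k}$. Concretely, the universal property of $A_f$ gives a unital $*$-homomorphism $\nu_f\,:\,A_f\rightarrow C$ with $\nu_f|_{A_k}=\nu_k$; it is surjective since $\nu_1(A_1)\cup\nu_2(A_2)$ generates $C$. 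I claim $\nu_f$ factors through $A_v$, i.e.\ $\Vert\nu_f(x)\Vert\leq\Vert x\Vert_v$ for all $x\in A_f$. For this, let $(K_k,\rho_k,\eta_k)$ be the GNS construction of $E_{A_k}$; since $C$ acts on $\bigoplus_k K_k$ through $\bigoplus_k\rho_k$, which is faithful by GNS-faithfulness of the family, it suffices to bound each $\Vert\rho_k(\nu_f(x))\Vert$. Now the triple $(K_k,\rho_k\circ\nu_f,\,\eta_k)$, together with the maps $\nu_k$, satisfies all the hypotheses of Proposition \ref{PropkVertexReduced}(4) relative to the vertex $k$ — except that $\nu_k$ need not be faithful after composing with $\rho_k$; but this is not needed to \emph{build} the isometry. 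Indeed, repeating the argument of Proposition \ref{PropkVertexReduced}(4), the reduced-word formulas for $\EE_k$ and for $E_{A_k}$ show that the map $H_{k,k}\rightarrow K_k$, $\pi_k(x)\xi_k\mapsto\rho_k(\nu_f(x))\eta_k$ on reduced $x$ and $\pi_k(b)\xi_k\mapsto\rho_k(\nu_f(b))\eta_k$ on $b\in B$, is a well-defined isometry $W_k\in\mathcal{L}_{A_k}(H_{k,k},K_k)$ intertwining $\lambda_{k,k}$ with $\rho_k\circ\nu_f$ on generators. Hence $\Vert\rho_k(\nu_f(x))\Vert\geq\Vert W_k^*\rho_k(\nu_f(x))W_k\Vert=\Vert\pi_k(x)\Vert$ for $x$ in the $*$-algebra generated by the $A_k$ in $A_f$, and by density $\Vert\rho_k(\nu_f(x))\Vert=\Vert\pi_k(x)\Vert$ for all $x\in A_f$. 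Therefore $\Vert\nu_f(x)\Vert=\max_k\Vert\rho_k(\nu_f(x))\Vert=\max_k\Vert\pi_k(x)\Vert=\Vert x\Vert_v$, so $\nu_f$ descends to a $*$-isomorphism $\nu\,:\,A_v\rightarrow C$ (surjective as noted, isometric by the equality just proved). The intertwining $\nu\circ\pi=\nu_k$ on $A_k$ and $E_{A_k}\circ\nu=\EE_{A_k}$ follow by evaluating both sides on $\pi(A_f)$ and using density; uniqueness of $\nu$ is immediate since $\pi(A_f)$ is dense.

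\smallskip

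\noindent The main obstacle is the factorization step in (3): Proposition \ref{PropkVertexReduced}(4) is phrased for one GNS-faithful expectation, whereas here GNS-faithfulness is only a joint property of the pair $\{E_{A_1},E_{A_2}\}$. The fix is to not quote (4) as a black box but to re-run its construction to produce, for each $k$ separately, an \emph{isometry} $W_k\,:\,H_{k,k}\hookrightarrow K_k$ (rather than a unitary), which is all that is needed to get $\Vert\pi_k(x)\Vert\leq\Vert\rho_k(\nu_f(x))\Vert$; joint GNS-faithfulness then upgrades these inequalities, one per vertex, to the norm equality $\Vert\nu_f(x)\Vert=\Vert x\Vert_v$. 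Everything else is routine bookkeeping with the reduced-word expansion of $A_f$ and the already-established properties of $\EE_k$, $\pi_k$ and $\pi_{v,k}$.
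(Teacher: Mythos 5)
Your treatment of assertions (1) and (2) is correct and coincides with the paper's proof: $\EE_{A_k}=\EE_k\circ\pi_{v,k}$, the identification of $(H_{k,k},\pi_{v,k},\xi_k)$ as a GNS construction for $\EE_{A_k}$, and the reduction of joint GNS-faithfulness to $\mathrm{Ker}(\pi_{v,1})\cap\mathrm{Ker}(\pi_{v,2})=\{0\}$, which is the definition of $\Vert\cdot\Vert_v$. For (3) your route is also essentially the paper's (the paper applies Proposition \ref{PropkVertexReduced}(4) to the image $C_k$ of $C$ in the GNS representation of $E_{A_k}$ to identify $C_k\simeq A_{v,k}$ and $m_k\circ\nu_f=\pi_k$; you re-run the same unitary construction by hand), and your observation that (4) cannot be quoted verbatim because GNS-faithfulness is only a joint hypothesis is exactly the right point.

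There is, however, one genuine logical slip in (3). To descend $\nu_f$ through $A_v$ you need $\Vert\rho_k(\nu_f(x))\Vert\leq\Vert\pi_k(x)\Vert$, but an intertwining \emph{isometry} $W_k$ with $W_k^*\rho_k(\nu_f(x))W_k=\pi_k(x)$ only yields the opposite inequality $\Vert\pi_k(x)\Vert\leq\Vert\rho_k(\nu_f(x))\Vert$ (which is what you need for injectivity of $\nu$, not for its existence). Your claim that "joint GNS-faithfulness upgrades these inequalities to the norm equality" is not a valid step: joint GNS-faithfulness only identifies $\Vert\nu_f(x)\Vert$ with $\max_k\Vert\rho_k(\nu_f(x))\Vert$; it says nothing about comparing $\rho_k\circ\nu_f$ with $\pi_k$ in the missing direction. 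The correct (and easy) fix is to note that $W_k$ is automatically a \emph{unitary}: its range contains $\rho_k(\nu_f(A_f))\eta_k\cdot A_k$, which is dense in the GNS module of $E_{A_k}$ because $\nu_f(A_f)$ is dense in $C$; hence $\rho_k\circ\nu_f(\cdot)=W_k\,\pi_k(\cdot)\,W_k^*$ and the norms are equal. With that correction the argument closes, and the remaining verifications (surjectivity of $\nu$, the identity $E_{A_k}\circ\nu=\EE_{A_k}$, uniqueness) are routine as you say.
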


\begin{proof}
$(1)$. It is obvious since, by Proposition \ref{PropkVertexReduced}, $\pi_k$ is faithful on $A_k$ for $k=1,2$.
\vspace{0.2cm}

\noindent$(2)$. By Proposition \ref{PropkVertexReduced}, the maps $\EE_{A_k}=\EE_k\circ\pi_{v,k}$ satisfy the desired properties and it suffices to check that the family $\{\EE_{A_1},\EE_{A_2}\}$ is GNS-faithful. Let $x_0\in A_f$ be such that $x=\pi(x_0)\in A_v$ satisfies $\EE_{A_k}(y^*x^*xy)=0$ for all $y\in A_v$ and all $k\in\{1,2\}$. Then, for all $k\in\{1,2\}$ we have $\EE_k(y^*\pi_{v,k}(x^*x)y)=0$ for all $y\in A_{v,k}$. Since $\EE_k$ is GNS-faithful, this implies that $\pi_{v,k}(x)=\pi_k(x_0)=0$ for all $k\in\{1,2\}$. Hence, $\Vert x\Vert_{A_v}=\text{Max}(\Vert\pi_1(x_0)\Vert,\Vert\pi_2(x_0)\Vert)=0$.
\vspace{0.2cm}

\noindent$(3)$. The proof is a routine. We include it for the convenience of the reader. Let $(L_k,m_k,f_k)$ be the GNS construction of $E_{A_k}$. By the universal property of $A_{v,k}$, the C*-algebra $C_k$ generated by $m_k( C )\subset\mathcal{L}_{A_k}(L_k)$ is canonically isomorphic with $A_{v,k}$. Hence, in the remainder of the proof we suppose that $C_k=A_{v,k}$ and, by the universal property of $A_f$, we have a unital surjective $*$-homomorphism $\nu_f\,:\,A_f\rightarrow C$ such that $\nu_f\vert_{A_k}=\nu_k$. Note that, by the identification we made, $m_k\circ\nu_f=\pi_{k}$. Hence, by construction of $A_v$, there exists a unique unital (surjective) $*$-homomorphism $\nu\,:\,C\rightarrow A_v$ such that $\pi_{v,k}\circ\nu=m_k$ for all $k\in\{1,2\}$. The homomorphism $\nu$ satisfies all the claimed properties and it suffices to check that it is faithful. But it is obvious since, by the identity $\pi_{v,k}\circ\nu=m_k$, $k=1,2$, it follows that $\text{Ker}(\nu)\subset\text{Ker}(m_1)\cap\text{Ker}(m_2)=\{0\}$, since the pair $(E_{A_1},E_{A_2})$ is GNS-faithful.
\end{proof}

\begin{corollary}\label{CorDegVertexRed}
If both $E_1$ and $E_2$ are homomorphisms then there is a canonical isomorphism $A_v\simeq  A_1\underset{B}{\oplus}A_2$, where $A_1\underset{B}{\oplus}A_2:=\{(a_1,a_2)\in A_1\oplus A_2\,:\, E_1(a_1)=E_2(a_2)\}$.
\end{corollary}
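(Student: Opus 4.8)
The plan is to apply the universal property of the vertex-reduced amalgamated free product, Proposition~\ref{PropVertexReduced}$(3)$, to the candidate C*-algebra $C:=A_1\underset{B}{\oplus}A_2$. First I would check that $C$ is a unital (separable) C*-algebra: since a conditional expectation which is a homomorphism restricts to the identity on $B$, the set $C$ is the preimage of the diagonal $\{(b,b)\,:\,b\in B\}$ under the $*$-homomorphism $E_1\oplus E_2\,:\,A_1\oplus A_2\rightarrow B\oplus B$, hence a closed unital $*$-subalgebra of $A_1\oplus A_2$; stability under products uses that $E_1$ and $E_2$ are multiplicative. Then I would introduce $\nu_1\,:\,A_1\rightarrow C$, $\nu_1(a)=(a,E_1(a))$, and $\nu_2\,:\,A_2\rightarrow C$, $\nu_2(a)=(E_2(a),a)$. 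These take values in $C$ precisely because each $E_k$ fixes $B$ pointwise; they are unital and $*$-preserving; they are multiplicative because $E_1$ and $E_2$ are homomorphisms; they are obviously injective; and they agree on $B$ since $\nu_1(b)=(b,b)=\nu_2(b)$ for $b\in B$.

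Next I would verify the remaining hypotheses of Proposition~\ref{PropVertexReduced}$(3)$. For generation, using $A_k=A_k^\circ\oplus B$ one writes any $(a_1,a_2)\in C$ with $E_1(a_1)=E_2(a_2)=:b$ as $\nu_1(a_1-b)+\nu_2(a_2-b)+\nu_1(b)$, so $C$ is in fact the closed linear span of $\nu_1(A_1)\cup\nu_2(A_2)$. For the conditional expectations I take $E_{A_k}\,:\,C\rightarrow A_k$ to be the restriction of the $k$-th coordinate projection; these are unital $*$-homomorphisms, hence ucp, and $E_{A_k}\circ\nu_k=\id_{A_k}$. The vanishing on reduced words is then immediate from $\nu_1(A_1^\circ)\subset A_1\oplus\{0\}$ and $\nu_2(A_2^\circ)\subset\{0\}\oplus A_2$: for a reduced word $a_1\cdots a_n$ with $n\geq2$ the indices alternate, so a factor lying in $A_1\oplus\{0\}$ is adjacent to one lying in $\{0\}\oplus A_2$ and their product already vanishes in $C$; for $n=1$ with $a_1\in A_{\overline{k}}^\circ$ one has $E_{A_k}(\nu_{\overline{k}}(a_1))=E_{\overline{k}}(a_1)=0$.

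Finally I would check that $\{E_{A_1},E_{A_2}\}$ is GNS-faithful. Since each $E_{A_k}$ is a unital $*$-homomorphism, its GNS construction $(L_k,m_k,f_k)$ may be taken to be $(A_k,\,L_{A_k}\circ E_{A_k},\,1_{A_k})$, and since $L_{A_k}$ is faithful we get $\text{Ker}(m_k)=\text{Ker}(E_{A_k})$; as $E_{A_1}\oplus E_{A_2}$ is nothing but the inclusion $C\hookrightarrow A_1\oplus A_2$, this yields $\text{Ker}(m_1)\cap\text{Ker}(m_2)=\text{Ker}(E_{A_1})\cap\text{Ker}(E_{A_2})=\{0\}$. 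All hypotheses of Proposition~\ref{PropVertexReduced}$(3)$ being met, it delivers a unique unital $*$-isomorphism $\nu\,:\,A_v\rightarrow A_1\underset{B}{\oplus}A_2$ with $\nu\circ\pi=\nu_k$ on $A_k$ for $k=1,2$, which is the asserted canonical isomorphism. I do not expect a genuine obstacle here; the only step requiring a little care is the identification of the GNS data of the homomorphisms $E_{A_k}$ used to conclude GNS-faithfulness, and even this could be sidestepped by checking directly that no nonzero element of $A_f$ is annihilated by both $\pi_1$ and $\pi_2$.
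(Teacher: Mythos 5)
Your proposal is correct and follows essentially the same route as the paper: the same maps $\nu_1(x)=(x,E_1(x))$, $\nu_2(y)=(E_2(y),y)$, the same coordinate-projection expectations $E_{A_k}$, the same observation that $\nu_1(A_1^\circ)\nu_2(A_2^\circ)=\{0\}$ kills all reduced words of length at least two, and the same identification of GNS-faithfulness with ${\rm Ker}(E_{A_1})\cap{\rm Ker}(E_{A_2})=\{0\}$, all fed into Proposition~\ref{PropVertexReduced}$(3)$. Your write-up is merely slightly more explicit (on generation of $C$ and on the GNS data of a homomorphism) than the paper's.
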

\begin{proof}
We use the universal property of $A_v$ described in Proposition \ref{PropVertexReduced}. Define $\nu_k\,:\,A_k\rightarrow A_1\underset{B}{\oplus}A_2$ by $\nu_1(x)=(x,E_1(x))$ and $\nu_2(y)=(E_2(y),y)$. It is clear that $\nu_1$ and $\nu_2$ are both faithful unital $*$-homomorphisms such that $\nu_1(b)=\nu_2(b)$ for all $b\in B$. Define $E_{A_k}\,:\,A_1\underset{B}{\oplus}A_2\rightarrow A_k$ by $E_{A_1}(a_1,a_2)=a_1$ and $E_{A_2}(a_1,a_2)=a_2$. Then, for all $k\in\{1,2\}$, $E_k$ is a unital $*$-homomorphisms such that $E_{A_k}\circ\nu_k(a)=a$ for all $a\in A_k$. In particular both $E_1$ and $E_2$ are conditional expectations and, since ${\rm Ker}(E_{A_1})\cap{\rm Ker}(E_{A_2})=\{0\}$, the family $\{E_{A_1},E_{A_2}\}$ is GNS-faithful. Hence, it suffices to check the condition on the reduced operators. Since $\nu_1(A_1^\circ)=\{(x,0)\,:\,x\in A_1^\circ\}$ and $\nu_2(A_2^\circ)=\{(0,y)\,:\,y\in A_2^\circ\}$, we have $\nu_1(A_1^\circ)\nu_2(A_2^\circ)=\nu_2(A_2^\circ)\nu_1(A_1^\circ)=\{0\}$. Hence, it suffices to check the condition on elements $(a_1,a_2)\in\nu_1(A_1^\circ)\cup \nu_2(A_2^\circ)$ which is obvious.
\end{proof}
\subsubsection{The edge-reduced amalgamated free product}
In this section we show how the construction of the edge-reduced (or, in the literature, the reduced) amalgamated free product in full generality is related to
the vertex-reduced free product we just defined.
\vspace{0.2cm}

\noindent For $\underline{i}\in I$, we consider the $B$-$B$-module $K_{\underline{i}}=K_{i_1}^\circ\underset{B}{\ot}\dots\underset{B}{\ot}K_{i_n}^\circ$ as Hilbert $B$-module with the left action of $B$ given by the unital $*$-homomorphism $\rho_{\underline{i}}\,:\,B\rightarrow\mathcal{L}_B(K_{\underline{i}})$, $\rho_{\underline{i}}(b)=\rho_{i_1}(b)\underset{B}{\ot}\id$ for all $b\in B$ and we define the Hilbert $B$-bimodule $K=B\oplus\left(\bigoplus_{\underline{i}\in I}K_{\underline{i}}\right)$.

\begin{example}If, for some $k\in\{1,2\}$, $E_k$ is an homomorphism then $K=B\oplus K_{\overline{k}}^\circ\simeq K_{\overline{k}}$. Hence, if both $E_1$ and $E_2$ are homomorphisms then $K=B$.
\end{example}

\begin{proposition} There are isomorphisms  between $H_{k,k}\underset{E_k}{\ot} B$ and $K$ for $k=1,2$ implemented by some unitary $V_k$.
Moreover when we intertwine the representation $\pi_k\otimes 1$ by $V_k$ we get the classical representation of the full amalgamated free product on the space $K$.
\end{proposition}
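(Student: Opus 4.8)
The plan is to establish both assertions simultaneously, for $k=1$ and $k=2$, by recognising the two Hilbert $B$-modules as GNS constructions of one and the same ucp map $A_f\to B$. The unitary $V_k$ and the intertwining property then follow automatically from uniqueness of the GNS construction, and one never needs to decompose $H_{k,k}$ or $K$ into reduced words.

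First I would set up the module $H_{k,k}\underset{E_k}{\ot}B$, the interior tensor product of the Hilbert $A_k$-module $H_{k,k}$ with $B$ regarded as an $A_k$--$B$-bimodule via $E_k$. Since $\pi_k(x)$ commutes with the right $A_k$-action on $H_{k,k}$, amplification gives a unital $*$-homomorphism $\pi_k\ot 1_B\,:\,A_f\to\mathcal{L}_B(H_{k,k}\underset{E_k}{\ot}B)$, and for the vector $\xi_k\ot 1_B$ and any $x\in A_f$ one computes
$$\langle\xi_k\ot 1_B,(\pi_k(x)\ot 1_B)(\xi_k\ot 1_B)\rangle_B=E_k\big(\langle\xi_k,\pi_k(x)\xi_k\rangle_{A_k}\big)=E_k\big(\EE_k(\pi_k(x))\big)=:\varphi(x),$$
so $\varphi\,:\,A_f\to B$ is ucp. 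From $(\pi_k(x)\xi_k\cdot a)\ot b=\pi_k(x)\xi_k\ot E_k(a)b$ in the interior tensor product and the density $\overline{\pi_k(A_f)\xi_k\cdot A_k}=H_{k,k}$ established in the proof of Proposition \ref{PropkVertexReduced}, one gets at once $\overline{(\pi_k(A_f)\ot 1_B)(\xi_k\ot 1_B)\cdot B}=H_{k,k}\underset{E_k}{\ot}B$. Hence $(H_{k,k}\underset{E_k}{\ot}B,\pi_k\ot 1_B,\xi_k\ot 1_B)$ is a GNS construction of $\varphi$.

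Next I would do the same for $K$. The classical construction of the reduced amalgamated free product provides a unital $*$-homomorphism $\lambda\,:\,A_f\to\mathcal{L}_B(K)$; under the identification $K\simeq K_k\underset{B}{\ot}K(k)$, where $K(k)$ is the subsum of $K$ indexed by the words not beginning with $k$, $\lambda|_{A_k}$ corresponds to $\rho_k\underset{B}{\ot}\id$. Writing $\Omega=1_B\in B\subset K$, the standard computation (using $K_k\simeq\eta_k\cdot B\oplus K_k^\circ$ and induction on the length) gives $\lambda(b)\Omega=b$ for $b\in B$ and $\lambda(a_1\cdots a_n)\Omega=\rho_{i_1}(a_1)\eta_{i_1}\ot\cdots\ot\rho_{i_n}(a_n)\eta_{i_n}\in K_{\underline{i}}$ for every reduced word $a_1\cdots a_n$ with $a_\ell\in A_{i_\ell}^\circ$. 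In particular $\Omega$ is cyclic, $\overline{\lambda(A_f)\Omega\cdot B}=K$, so $(K,\lambda,\Omega)$ is a GNS construction of the canonical conditional expectation $E_{A_f}\,:\,A_f\to B$, $x\mapsto\langle\Omega,\lambda(x)\Omega\rangle$. It then remains to identify $\varphi$ with $E_{A_f}$: both are ucp maps on $A_f$, which is the closed linear span of $B$ and the reduced operators; they agree (as the identity) on $B$; and on a reduced word $a_1\cdots a_n$ they both vanish --- for $n\geq 2$ because $\EE_k(\pi_k(a_1\cdots a_n))=0$ and $\lambda(a_1\cdots a_n)\Omega\perp\Omega$, and for $n=1$, $a=a_1\in A_i^\circ$, because $\varphi(a)=E_k(a)=0$ if $i=k$ while $\varphi(a)=E_k(0)=0$ if $i=\overline{k}$, and $E_{A_f}(a)=0$ in either case. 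Thus $\varphi=E_{A_f}$, and by uniqueness of the GNS construction there is a unique unitary $V_k\in\mathcal{L}_B(H_{k,k}\underset{E_k}{\ot}B,K)$ with $V_k(\xi_k\ot 1_B)=\Omega$ and $V_k(\pi_k(x)\ot 1_B)V_k^*=\lambda(x)$ for all $x\in A_f$, which is exactly the statement (for $k=1,2$).

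The only genuinely non-formal points are the two cyclicity verifications. For $H_{k,k}\underset{E_k}{\ot}B$ this is a one-line consequence of the density of $\pi_k(A_f)\xi_k\cdot A_k$ already proved, so no work is needed. For $K$ it is the classical but slightly lengthy computation of the action of reduced words on the vacuum recalled above, together with the bookkeeping ensuring that $\lambda_1$ and $\lambda_2$ agree on $B$ so that $\lambda$ is well defined on $A_f$; I would simply invoke Voiculescu's construction for this. A more hands-on alternative --- decomposing $K_{i_1}\simeq\eta_{i_1}\cdot B\oplus K_{i_1}^\circ$ at the two ends of each summand $H_{\underline{i}}$ of $H_{k,k}$ and matching the resulting pieces bijectively with the $B$- and $K_{\underline{j}}$-summands of $K$ --- also works but is considerably more tedious, so I would keep it in reserve.
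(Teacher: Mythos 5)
Your route is genuinely different from the paper's and, modulo one incorrect identity, it works. The paper constructs $V_k$ by hand: it splits the outer factors of each summand $H_{\underline{i}}\underset{E_k}{\ot}B$ (using $A_{i_n}\underset{E_k}{\ot}B\simeq K_{i_n}\simeq \eta_{i_n}\cdot B\oplus K_{i_n}^\circ$ and likewise for $K_{i_1}$) into an explicit direct sum of the modules $K_{\underline{j}}$, matches these pieces bijectively with the summands of $K$, and then checks the intertwining relation $V_k(\pi_k(a)\ot 1)V_k^*=\rho(a)$ against the recalled classical representation. You instead exhibit $(H_{k,k}\underset{E_k}{\ot}B,\pi_k\ot 1,\xi_k\ot 1)$ and $(K,\lambda,\Omega)$ as GNS constructions of one and the same ucp map $A_f\to B$ and invoke uniqueness; this produces the unitary and the intertwining property in one stroke and replaces the combinatorial bookkeeping by the observation that $E_k\circ\EE_k\circ\pi_k$ and $x\mapsto\langle\Omega,\lambda(x)\Omega\rangle$ both vanish on reduced words and restrict to the identity on $B$. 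What each approach buys: the paper's decomposition is self-contained and makes the unitary completely explicit, while yours is shorter, symmetric in $k$, and localizes all the work in the two cyclicity statements.

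There is, however, one step that fails as written: the identity $(\xi\cdot a)\ot b=\xi\ot E_k(a)b$ does not hold in $H_{k,k}\underset{E_k}{\ot}B$, because this is the tensor product along the completely positive map $E_k$, not along a homomorphism. Taking $\xi=\xi_k$ and $b=1$, the left-hand side has squared norm $\Vert E_k(a^*a)\Vert$ and the right-hand side $\Vert E_k(a)^*E_k(a)\Vert$, which differ unless $a$ lies in the multiplicative domain of $E_k$ (e.g. $a\in B$). Fortunately the cyclicity you are after survives without this identity: since $\xi_k=1_{A_k}$ one has $\xi_k\cdot a=\pi_k(a)\xi_k$ for all $a\in A_k$, hence $\pi_k(A_f)\xi_k\cdot A_k=\pi_k(A_fA_k)\xi_k\subset\pi_k(A_f)\xi_k$, so $\pi_k(A_f)\xi_k$ is already dense in $H_{k,k}$; combining this with the continuity of $\xi\mapsto\xi\ot b$ and the totality of elementary tensors gives $\overline{(\pi_k(A_f)\ot 1)(\xi_k\ot 1)\cdot B}=H_{k,k}\underset{E_k}{\ot}B$ directly. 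With that one-line repair your argument is complete.
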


\begin{proof}
Note that, for $\underline{i}=(i_1,\dots,i_n)\in I$ with $i_1=i_n=k$ (hence $n$ is odd) we have, if $n=1$, $H_{\underline{i}}\underset{E_k}{\ot} B=A_k\underset{E_k}{\ot} B\simeq K_k\simeq K_k^\circ\oplus B$,
and, if $n\geq 3$, $H_{\underline{i}}\underset{E_k}{\ot} B=K_k\underset{B}{\ot}\left(K_{\overline{k}}^\circ\underset{B}{\ot}\dots\underset{B}{\ot} K_{\overline{k}}^\circ\right)\underset{B}{\ot} K_k\simeq K_{\underline{i}}\oplus K_{\underline{i}'}\oplus K_{\underline{i}''}\oplus K_{\underline{i}'''}$, where $\underline{i}'=(i_2,\dots,i_n)$, $\underline{i}''=(i_1,\dots,i_{n-1})$ and $\underline{i}'''=(i_2,\dots,i_{n-1})$. Hence the existence of $V_k\,:\,H_{k,k}\underset{E_k}{\ot} B\rightarrow K$. It is easy to check that $V_k$ satisfies $V_k(\pi_k(a)\ot 1)V_k^*=\rho(a)$ for all $a\in A_k$ and all $k\in\{1,2\}$ where $\rho$ is the (classical) reduced free product representation which we recall below for convenience. For $l\in\{1,2\}$ define $K(l)=B\oplus\left(\underset{\underline{i}\in I,\,i_1\neq l}{\bigoplus} K_{\underline{i}}\right)$ and note that we have a unital $*$-homomorphism $\rho_{l}\,:\,B\rightarrow\mathcal{L}_{B}(K(l))$ defined by $\rho_{l}=\underset{\underline{i}\in I_k,\,i_1\neq l}{\bigoplus}\rho_{\underline{i}}$. Let $U_l\in\mathcal{L}_{B}(K_l\underset{\rho_{l}}{\ot} K(l),K)$ be the unitary operator defined by

$$\begin{array}{llcl}
U_l\,:\,&K_l\underset{\rho_{l}}{\ot}K(l) &\longrightarrow& K\\
&\eta_l\underset{\rho_l}{\ot} B &\overset{\simeq}{\longrightarrow}&B\\
&K_l^\circ\underset{\rho_l}{\ot} B&\overset{\simeq}{\longrightarrow}&K_l^\circ\\
&\eta_l\underset{\rho_l}{\ot} H_{\underline{i}}&\overset{\simeq}{\longrightarrow}&H_{\underline{i}}\\
&K_l^\circ\underset{\rho_l}{\ot} H_{\underline{i}}&\overset{\simeq}{\longrightarrow}&H_{(l,\underline{i})}\\
\end{array}$$
where $(l,\underline{i})=(l,i_1,\dots,i_n)\in I$ if $\underline{i}=(i_1,\dots,i_n)\in I$ with $i_1\neq l$. We define the unital $*$-homomorphisms $\lambda_{l}\,:\,\mathcal{L}_B(K_l)\rightarrow\mathcal{L}_{B}(K)$ by $\lambda_{l}(x)=U_l(x\ot1)U_l^*$. By definition we have $\lambda_{1}(\rho_1(b))=\lambda_{2}(\rho_2(b))$ for all $b\in B$. It follows that there exists a unique unital $*$-homomorphism $\rho\,:\,A_f\rightarrow \mathcal{L}_{B}(K)$ such that $\rho(a)=\lambda_k(a)$ for $a\in A_k$, for all $k\in\{1,2\}$.
\end{proof}

\begin{definition} The \textit{edge-reduced} amalgamated free product is the C*-subalgebra $A_e\subset\mathcal{L}_B(K)$ generated by $\lambda_1(A_1)\cup\lambda_2(A_2)\subset\mathcal{L}_B(K)$. To be more precise, we use sometimes the notation $A_e=A_1\overset{e}{\underset{B}{*}} A_2$.
\end{definition}

\begin{example}
If, for some $k\in\{1,2\}$, $E_k$ is an homomorphism then $A_e$ is the C*-algebra $\overline{\rho_{\overline{k}}(A_{\overline{k}})}\subset\mathcal{L}_B(K_{\overline{k}})$. If both $E_1$ and $E_2$ are homomorphisms then $A_e\simeq B$.
\end{example}

\noindent The preceding example shows that the edge reduced amalgamated free product may forget everything about the initial C*-algebras $A_1$ and $A_2$ in the extreme degenerated case: it only remembers $B$. This shows that, in general, one should consider instead the vertex-reduced amalgamated free product. Indeed, even in the extreme degenerated case, the vertex reduced amalgamated free product correctly remembers the C*-algebras $A_1$ and $A_2$, as shown in corollary \ref{CorDegVertexRed}.

\vspace{0.2cm}

\noindent  In the following proposition we recall the properties of $A_e$. The results below are well known when $E_1$ and $E_2$ are GNS-faithful. The proof is similar to the proof of proposition \ref{PropkVertexReduced} and we leave it to the reader.

\begin{proposition}\label{PropEdgeReduced}
The following holds.
\begin{enumerate}
\item $\rho$ is faithful on $B$.
\item If $\E_k$ is GNS-faithful then $\rho$ is faithful on $A_k$.
\item There exists a unique ucp map $\EE\,:\, A_{e}\rightarrow B$ such that $\EE\circ\rho(b)=b$ for all $b\in B$ and,
$$\EE(\rho(a_1,\dots a_n))=0\text{ for all }a=a_1\dots a_n\in A_f\text{ reduced}.$$
\noindent Moreover, $\EE$ is GNS-faithful.
\item For any unital C*-algebra $C$ with unital $*$-homomorphisms $\nu_k\,:\,A_k\rightarrow C$ such that
\begin{itemize}
\item $\nu_1(b)=\nu_2(b)$ for all $b\in B$,
\item $C$ is generated, as a C*-algebra, by $\nu_1(A_1)\cup\nu_2(A_2)$,
\item $\nu_1\vert_B=\nu_2\vert_B$ is faithful and there exists a GNS-faithful ucp map $E\,:\,C\rightarrow B$ such that $E\circ\nu_k(b)=b$ for all $b\in B$, $k=1,2$, and,
$$E(\nu_{i_1}(a_1)\dots\nu_{i_n}(a_n))=0\text{ for all }a=a_1\dots a_n\in A_f\text{ reduced},$$
\end{itemize}
there exists a unique unital $*$-isomorphism $\nu\,:\,A_{e}\rightarrow C$ such that $\nu\circ\rho(a)=\nu_k(a)$ for all $a\in A_k$, $k\in \{1,2\}$. Moreover, $\nu$ satisfies $E\circ\nu=\EE$.
\end{enumerate}
\end{proposition}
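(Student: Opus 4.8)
The plan is to follow the template of the proof of Proposition~\ref{PropkVertexReduced}, the distinguished vector now being $\xi:=1_B\in B\subset K$ and the candidate conditional expectation being $\EE\,:\,A_e\rightarrow B$, $\EE(x):=\langle\xi,x\xi\rangle$. The first step is to establish the basic computation underlying everything. Writing $\lambda_l(x)=U_l(x\ot 1)U_l^*$ and using that $E_k(a)=0$ forces $\rho_k(a)\eta_k\in K_k^\circ$, I would prove by induction on $n$ that for a reduced word $a=a_1\dots a_n$ with $a_j\in A_{i_j}^\circ$ and $\underline{i}=(i_1,\dots,i_n)\in I$ one has
\begin{equation*}
\rho(a_1\dots a_n)\xi=\rho_{i_1}(a_1)\eta_{i_1}\underset{B}{\ot}\dots\underset{B}{\ot}\rho_{i_n}(a_n)\eta_{i_n}\in K_{\underline{i}},
\end{equation*}
while $\langle\xi,\rho(b)\xi\rangle=b$ for $b\in B$. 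In the inductive step one uses that $i_1\neq i_2$, so that the vector obtained at stage $n-1$ lies in the summand $K(i_1)$ of $K$ on which $U_{i_1}$ is modelled, and then that $(a_1\ot 1)$ prepends $\rho_{i_1}(a_1)\eta_{i_1}$, which lands in $K_{i_1}^\circ$ since $E_{i_1}(a_1)=0$.

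Granting this formula, statements (1) and (3) follow immediately. Since $K_{\underline{i}}\perp B$ we obtain $\EE(\rho(b))=b$ for $b\in B$ and $\EE(\rho(a_1\dots a_n))=0$ for every reduced operator; as $A_f$ is the closed linear span of $B$ and the reduced operators, this forces uniqueness of $\EE$, and $\EE\circ\rho|_B=\id_B$ shows that $\rho$ is faithful, hence isometric, on $B$. The same formula, combined with $K_{i_k}^\circ=\overline{\text{Span}}\{\rho_{i_k}(a)\eta_{i_k}\cdot b\}$ and $\xi\cdot B=B$, shows $\overline{\rho(A_f)\xi\cdot B}=K$, so $(K,\id,\xi)$ is a GNS construction of $\EE$; in particular $\EE$ is GNS-faithful. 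For (2), decomposing $a=(a-E_k(a))+E_k(a)$ for $a\in A_k$ gives $\EE(\rho(a))=E_k(a)$, so if $\rho(a)=0$ then $E_k(y^*a^*ay)=\EE(\rho(y^*a^*ay))=0$ for all $y\in A_k$; since $\|\rho_k(ay)\eta_k b\|^2=b^*E_k(y^*a^*ay)b$, this makes $\rho_k(a)$ vanish on a dense submodule, hence $\rho_k(a)=0$, hence $a=0$ by GNS-faithfulness of $E_k$.

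For (4) I would argue exactly as in the proof of Proposition~\ref{PropkVertexReduced}(4). Given $(C,\nu_k,E)$ satisfying the hypotheses, GNS-faithfulness of $E$ lets us assume $C\subset\mathcal{L}_B(L)$ with cyclic vector $f$, and the universal property of $A_f$ yields a surjective unital $*$-homomorphism $\nu_f\,:\,A_f\rightarrow C$ with $\nu_f|_{A_k}=\nu_k$. Since both $\EE\circ\rho$ and $E\circ\nu_f$ restrict to the identity on $B$ and annihilate reduced operators, they coincide on $A_f$; hence $U(\rho(x)\xi\cdot b):=\nu_f(x)f\cdot b$ is a well-defined $B$-linear isometry $K\rightarrow L$, surjective because $\nu_f$ is, so it extends to a unitary $U\in\mathcal{L}_B(K,L)$ with $U\xi=f$. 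Then $\nu(T):=UTU^*$ restricts to a $*$-isomorphism $A_e\rightarrow C$ with $\nu\circ\rho=\nu_k$ on $A_k$ and $E\circ\nu=\EE$, and uniqueness is clear.

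The only genuine content is the fundamental formula of the first step; everything else is formal manipulation with conditional expectations and GNS constructions. The main obstacle I anticipate is the bookkeeping in that induction: one has to keep track of the five defining pieces of each $U_l$ and of the internal tensor products over $B$, and in particular verify at each stage that the relevant vector sits in the correct summand $K(i_1)$ of $K$ so that $U_{i_1}$ can be applied. Once this is carefully set up, the argument is a routine variant of the vertex-reduced case, which is why the authors chose to leave it to the reader.
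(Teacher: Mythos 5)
Your proposal is correct and is exactly the adaptation the paper has in mind: the authors omit the proof of Proposition \ref{PropEdgeReduced}, stating only that it is "similar to the proof of Proposition \ref{PropkVertexReduced}", and your argument carries out precisely that parallel, with the formula $\rho(a_1\dots a_n)\xi=\rho_{i_1}(a_1)\eta_{i_1}\ot\dots\ot\rho_{i_n}(a_n)\eta_{i_n}\in K_{\underline{i}}$ playing the role of equation $(\ref{EqGNS})$ and the remaining assertions following by the same GNS and density arguments. I see no gaps.
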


\begin{proposition}
For all $k\in\{1,2\}$ there exists a unique unital $*$-homomorphism
$$\lambda_{v,k}\,:\,A_{v,k}\rightarrow A_e\quad\text{such that}\quad\lambda_{v,k}\circ\pi_k=\rho.$$
Moreover, $\lambda_{v,k}$ is faithful on $\pi_k(A_{\overline{k}})$ and, if $E_k$ is GNS-faithful, $\lambda_{k,v}$ is an isomorphism.
\end{proposition}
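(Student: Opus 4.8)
The plan is to use that $\pi_k\colon A_f\to A_{v,k}$ is surjective, so that a $*$-homomorphism $\lambda_{v,k}$ with $\lambda_{v,k}\circ\pi_k=\rho$ exists --- and is then automatically unique --- exactly when $\ker\pi_k\subseteq\ker\rho$. To get this inclusion I would compare the two ``free-product conditional expectations'' $P_k:=\EE_k\circ\pi_k\colon A_f\to A_k$ and $Q:=\EE\circ\rho\colon A_f\to B$ supplied by Propositions \ref{PropkVertexReduced} and \ref{PropEdgeReduced}. Checking on $B$ and on reduced operators one finds $Q=E_k\circ P_k$. Moreover, by the proof of Proposition \ref{PropkVertexReduced}, $(H_{k,k},\pi_k,\xi_k)$ is a GNS construction of $P_k$, and likewise $(K,\rho,1_B)$ is a GNS construction of $Q$; using that $\pi_k(x^*x)\ge0$ and that $\pi_k(A_f)\xi_k\cdot A_k$ is dense in $H_{k,k}$, one obtains for $x\in A_f$ that $\pi_k(x)=0$ iff $P_k(y^*x^*xy)=0$ for all $y\in A_f$, and similarly $\rho(x)=0$ iff $Q(y^*x^*xy)=0$ for all $y$. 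Since $Q=E_k\circ P_k$, the first condition implies the second, whence $\ker\pi_k\subseteq\ker\rho$ and $\lambda_{v,k}$ exists and is unique. (Alternatively, the unitary $V_k$ of the preceding proposition satisfies $V_k\bigl(\pi_k(x)\underset{E_k}{\ot}1_B\bigr)V_k^*=\rho(x)$, and $T\mapsto T\underset{E_k}{\ot}1_B$ is a contractive $*$-homomorphism, which yields $\|\rho(x)\|\le\|\pi_k(x)\|$ directly.)

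For the faithfulness of $\lambda_{v,k}$ on $\pi_k(A_{\overline{k}})$, I would observe that $\lambda_{v,k}\circ(\pi_k|_{A_{\overline{k}}})=\rho|_{A_{\overline{k}}}$, so the claim amounts to $\ker(\pi_k|_{A_{\overline{k}}})=\ker(\rho|_{A_{\overline{k}}})$, and I would show that both equal $\ker\rho_{\overline{k}}$, the kernel of the GNS representation of $E_{\overline{k}}$. On the vertex side, $\pi_k(a)=u_{k,k}^*\lambda_{\overline{k},k}(a)u_{k,k}$ and $\lambda_{\overline{k},k}=\bigoplus_{\underline{i}\in I_{\overline{k},k}}\lambda_{\underline{i}}$ with each $\lambda_{\underline{i}}$ of the form $\rho_{\overline{k}}\underset{B}{\ot}\id$, so $\lambda_{\overline{k},k}$ factors through $\rho_{\overline{k}}$; already the summand $\underline{i}=(\overline{k},k)$, for which $H_{\underline{i}}=K_{\overline{k}}\underset{B}{\ot}A_{k}$ contains $K_{\overline{k}}$ isometrically via $\xi\mapsto\xi\ot 1$, makes that factorisation injective, so $\ker(\pi_k|_{A_{\overline{k}}})=\ker\rho_{\overline{k}}$. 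On the edge side, by construction $\rho(a)=\lambda_{\overline{k}}(\rho_{\overline{k}}(a))$ for $a\in A_{\overline{k}}$, where $\lambda_{\overline{k}}$ is implemented by the unitary $U_{\overline{k}}$ on $K_{\overline{k}}\underset{B}{\ot}K(\overline{k})$ and is injective because the $B$-summand of $K(\overline{k})$ makes $K_{\overline{k}}$ embed isometrically via $\xi\mapsto\xi\ot 1$; hence $\ker(\rho|_{A_{\overline{k}}})=\ker\rho_{\overline{k}}$ as well.

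Finally, assume $E_k$ is GNS-faithful. Since $\ker\pi_k\subseteq\ker\rho$ and $\pi_k$ is onto, $\lambda_{v,k}$ is an isomorphism precisely when $\ker\rho\subseteq\ker\pi_k$. So take $x$ with $\rho(x)=0$, fix $y\in A_f$, and set $c:=P_k(y^*x^*xy)\in A_k$, which is positive. Using that $P_k$ is an $A_k$-bimodule map --- this is immediate from $\pi_k(a)\xi_k=a$ for $a\in A_k$, which gives $\EE_k(\pi_k(a)\,w\,\pi_k(a'))=a\,\EE_k(w)\,a'$ --- together with $Q=E_k\circ P_k$ and $\rho(x)=0$, one gets for every $z\in A_k$
\[E_k(z^*cz)=E_k\bigl(P_k((yz)^*x^*x(yz))\bigr)=Q\bigl((yz)^*x^*x(yz)\bigr)=0 .\]
So $E_k(z^*cz)=0$ for all $z\in A_k$, and GNS-faithfulness of $E_k$ forces $c^{1/2}=0$, i.e.\ $c=0$; hence $P_k(y^*x^*xy)=0$ for all $y$, so $\pi_k(x)=0$ by the criterion of the first paragraph, giving $\ker\rho\subseteq\ker\pi_k$.

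I expect the second paragraph to be the main obstacle: one must keep track of the various interior tensor products $K_{\overline{k}}\underset{B}{\ot}(\,\cdot\,)$ occurring in the summands of $H_{\overline{k},k}$ and in $K$, and verify uniformly that each inclusion $\xi\mapsto\xi\ot 1$ out of $K_{\overline{k}}$ is isometric, so that neither $\pi_k|_{A_{\overline{k}}}$ nor $\rho|_{A_{\overline{k}}}$ detects more than $\rho_{\overline{k}}$. Granting the bimodularity of $P_k$ and the identity $Q=E_k\circ P_k$, both the existence statement and the GNS-faithful case then reduce to a short diagram chase.
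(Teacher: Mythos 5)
Your proof is correct, but it is organised rather differently from the paper's, which is essentially a three-line argument. For existence, the paper simply conjugates by the unitary $V_k$ of the preceding proposition, setting $\lambda_{v,k}(x)=V_k(x\ot 1)V_k^*$ --- this is exactly the route you relegate to a parenthesis; your primary argument, comparing $Q=\EE\circ\rho$ with $E_k\circ P_k$ where $P_k=\EE_k\circ\pi_k$ and using the GNS criteria for vanishing, is a valid substitute but does more work than necessary given that $V_k$ is already available. For the faithfulness of $\lambda_{v,k}$ on $\pi_k(A_{\overline{k}})$ both arguments reduce to $\ker\rho_{\overline{k}}$; the paper gets there by applying $\EE\circ\rho$ to $y^*x^*xy$ for $y\in A_{\overline{k}}$ to conclude $E_{\overline{k}}(y^*x^*xy)=0$, whereas you identify $\ker(\pi_k|_{A_{\overline{k}}})$ and $\ker(\rho|_{A_{\overline{k}}})$ with $\ker\rho_{\overline{k}}$ structurally via the isometric embeddings $\xi\mapsto\xi\ot 1$ of $K_{\overline{k}}$ into the relevant summands --- a legitimate variant, and your worry that this is the delicate point is justified but the verification goes through. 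The real divergence is in the last claim: the paper invokes the universal property of $A_e$ (Proposition \ref{PropEdgeReduced}(4)), observing that the ucp map $E_k\circ\EE_k\,:\,A_{v,k}\rightarrow B$ is GNS-faithful whenever $E_k$ is, while you prove $\ker\rho\subseteq\ker\pi_k$ by hand using the $A_k$-bimodularity of $P_k$ and GNS-faithfulness of $E_k$. Your computation is in effect the proof of the paper's unproved assertion that $E_k\circ\EE_k$ is GNS-faithful, so your version is more self-contained and more elementary, at the cost of length; the paper's is shorter because it leans on the universal-property machinery already set up in Proposition \ref{PropEdgeReduced}.
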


\begin{proof}
The formulae $\lambda_{v,k}(x)=V_k(x\ot 1)V_k^*$ defines a unital $*$-homomorphism $\lambda_{v,k}\,:\,A_{v,k}\rightarrow A_e$  satisfying $\lambda_{v,k}\circ\pi_k=\rho$. The uniqueness of $\lambda_{v,k}$ is obvious. Let us check that $\lambda_{v,k}$ is faithful on $\pi_k(A_{\overline{k}})$. Suppose that $x\in A_{\overline{k}}$ and $\lambda_{v,k}(\pi_k(x))=0$. Then, for all $y\in A_{\overline{k}}$, we have $\rho(y^*x^*xy)=\lambda_{v,k}(\pi_k(y^*x^*xy))=0$. Hence, $0=\EE\circ\rho(y^*x^*xy)=\EE\circ\rho(E_{\overline{k}}(y^*x^*xy))=E_{\overline{k}}(y^*x^*xy)$. It follows that $x\in\text{Ker}(\rho_{\overline{k}})$ hence, $\lambda_{\overline{k},k}(x)=\oplus_{\underline{i}\in I_{\overline{k},k}}\rho_{\overline{k}}(x)\ot 1=0$ which implies that $\pi_k(x)=u_{k,k}^*\lambda_{\overline{k},k}(x)u_{k,k}=0$. The last statement follows from the universal property of $A_e$ since the ucp map $E_k\circ\EE_k\,:\,A_{v,k}\rightarrow B$ is GNS-faithful whenever $E_k$ is GNS-faithful.
\end{proof}

\noindent In the next proposition, we study some associativity properties between the edge-reduced and the vertex-reduced amalgamated free product. The result is interesting in itself and it will be used to easily obtain ucp radial multipliers on the vertex-reduced amalgamated free product.

\begin{proposition}\label{CorVertex-EdgeReduced}
Let $A_1,A_2,A_3$ be unital C*-algebras with a common unital C*-subalgebra $B$ and conditional expectations $\E_k\,:\,A_k\rightarrow B$. After identification of $A_1$ with a C*-subalgebra of both $A_1\overset{1}{\underset{B}{*}}A_2$ and $A_1\overset{1}{\underset{B}{*}}A_3$, the canonical GNS-faithful ucp maps $A_1\overset{1}{\underset{B}{*}}A_2\rightarrow A_1$ and $A_1\overset{1}{\underset{B}{*}}A_3\rightarrow A_1$ become conditional expectations and, with respect to this GNS-faithful conditional expectations, we have canonical isomorphisms
\begin{itemize}
\item $\left(A_1\overset{1}{\underset{B}{*}}A_2\right)\overset{e}{\underset{A_1}{*}}\left( A_1\overset{1}{\underset{B}{*}}A_3\right)\simeq A_1\overset{1}{\underset{B}{*}}\left(A_2\overset{e}{\underset{B}{*}}A_3\right)$.
\item $\left(A_1\overset{2}{\underset{B}{*}}A_2\right)\overset{e}{\underset{A_2}{*}}\left( A_3\overset{2}{\underset{B}{*}}A_2\right)\simeq \left(A_1\overset{e}{\underset{B}{*}}A_3\right)\overset{2}{\underset{B}{*}}A_2$.
\end{itemize}
\end{proposition}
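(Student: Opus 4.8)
The strategy is to verify that both sides satisfy the universal property of the relevant edge-reduced or vertex-reduced amalgamated free product, as recorded in Propositions \ref{PropkVertexReduced} and \ref{PropEdgeReduced}. The first identification will be carried out as follows. First I would check that the GNS-faithful ucp map $\EE_1\,:\,A_1\overset{1}{\underset{B}{*}}A_2\rightarrow A_1$ is in fact a conditional expectation: by the Choi multiplicative domain argument this amounts to verifying $\EE_1(\pi_1(b)x)=b\,\EE_1(x)$ for $b\in B\subset A_1$, which is immediate from the defining formulas in Proposition \ref{PropkVertexReduced}(3) applied to reduced words (left multiplication by $b$ only alters the first letter and preserves reducedness). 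Next, write $C_k:=A_1\overset{1}{\underset{B}{*}}A_k$ for $k=2,3$, with canonical faithful embeddings of $A_1$ and of $A_k$, and with conditional expectation $\EE_1^{(k)}\,:\,C_k\rightarrow A_1$. Form the edge-reduced free product $D:=C_2\overset{e}{\underset{A_1}{*}}C_3$ with respect to these, and the canonical GNS-faithful conditional expectation $\EE\,:\,D\rightarrow A_1$ from Proposition \ref{PropEdgeReduced}(3).

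To show $D\simeq A_1\overset{1}{\underset{B}{*}}\left(A_2\overset{e}{\underset{B}{*}}A_3\right)=:A_{v,1}'$, I would invoke the universal property of the $1$-vertex-reduced product (Proposition \ref{PropkVertexReduced}(4)) with the two C*-algebras playing the roles of $A_1$ and $A_2\overset{e}{\underset{B}{*}}A_3$. The data needed are: faithful unital $*$-homomorphisms $\nu_1\,:\,A_1\rightarrow D$ (the composite $A_1\hookrightarrow C_2\hookrightarrow D$) and $\nu_2\,:\,A_2\overset{e}{\underset{B}{*}}A_3\rightarrow D$; the condition $\nu_1|_B=\nu_2|_B$; generation of $D$; and a GNS-faithful ucp map $E\,:\,D\rightarrow A_1$ killing the appropriate reduced words. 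For $\nu_2$, one uses the universal property of $A_2\overset{e}{\underset{B}{*}}A_3$ (Proposition \ref{PropEdgeReduced}(4)): the embeddings $A_k\hookrightarrow C_k\hookrightarrow D$ agree on $B$, $D$ is generated by them together with $A_1$, and the relevant map $D\rightarrow B$ is $E_1^{A_1}\circ\EE$ where $E_1^{A_1}\,:\,A_1\rightarrow B$ is the original expectation — one checks it is GNS-faithful (composite of GNS-faithful maps, using that the GNS module of the composite is built from the tensor product of the two GNS modules) and kills reduced words in $A_2^\circ,A_3^\circ$ by a direct computation tracing a reduced word of $D$ through $\EE$. The map $E$ for the outer vertex-reduced product is then $\EE$ itself, and one checks faithfulness of $\nu_1,\nu_2$ — faithfulness of $\nu_2$ being the only point requiring care, via Proposition \ref{PropEdgeReduced}(2) applied to the product $A_2\overset{e}{\underset{B}{*}}A_3$ together with the faithfulness of $C_k\hookrightarrow D$ on $C_k$. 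The condition on reduced words for $E=\EE$ is verified by expanding a reduced word $a_1\cdots a_n$ with $a_j$ alternately in $A_1^\circ$ and $(A_2\overset{e}{\underset{B}{*}}A_3)^\circ$, rewriting each middle letter as an $A_1$-alternating word inside $C_2$ or $C_3$, and observing that the result is a reduced word of $D$ over $A_1$, hence killed by $\EE$.

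The second identification is entirely parallel: set $C_2:=A_1\overset{2}{\underset{B}{*}}A_2$, $C_3:=A_3\overset{2}{\underset{B}{*}}A_2$ with conditional expectations onto $A_2$ (again a conditional expectation by the multiplicative-domain argument), form $C_2\overset{e}{\underset{A_2}{*}}C_3$, and match it with $\left(A_1\overset{e}{\underset{B}{*}}A_3\right)\overset{2}{\underset{B}{*}}A_2$ via Proposition \ref{PropkVertexReduced}(4) with the roles of "$A_k$" and "$A_{\overline{k}}$" taken by $A_2$ and $A_1\overset{e}{\underset{B}{*}}A_3$ respectively; the inner edge-reduced product is handled by Proposition \ref{PropEdgeReduced}(4) exactly as before. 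The main obstacle, and the only place where real bookkeeping is unavoidable, is the verification that the candidate expectation annihilates all reduced words: one must set up a clean normal-form description of a reduced word on the left-hand side, push it through the iterated expectations, and recognize the image as a reduced word (or a sum of such) on the right-hand side, using repeatedly that $A_1^\circ=\ker\E_1$ sits inside $\ker$ of the relevant larger expectation. Once the reduced-word conditions and GNS-faithfulness are in place, both isomorphisms, together with the intertwining of the canonical conditional expectations, follow immediately from the cited universal properties, and uniqueness is automatic since $A_f$ is the closed linear span of $B$ and the reduced operators.
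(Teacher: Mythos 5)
Your outline runs the identification in the opposite direction from the paper's, and this is precisely where a genuine gap appears. The paper starts from the right-hand side $\widetilde{A}=A_1\overset{1}{\underset{B}{*}}\bigl(A_2\overset{e}{\underset{B}{*}}A_3\bigr)$, locates copies of $A_1\overset{1}{\underset{B}{*}}A_2$ and $A_1\overset{1}{\underset{B}{*}}A_3$ inside it, and only at the end invokes the universal property of the edge-reduced product over $A_1$; the point of that ordering is that every GNS-faithful ucp map it needs is a restriction of the single canonical map $\widetilde{\EE}:\widetilde{A}\rightarrow A_1$, and the entire content of the paper's Claim is that these restrictions remain GNS-faithful, proved by exploiting the free position of $A_3$. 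You instead start from your $D=C_2\overset{e}{\underset{A_1}{*}}C_3$ and must first produce $\nu_2:A_2\overset{e}{\underset{B}{*}}A_3\rightarrow D$ via Proposition \ref{PropEdgeReduced}(4). That universal property demands a GNS-faithful ucp map onto $B$ defined on the subalgebra $Y\subset D$ generated by the images of $A_2$ and $A_3$, and your candidate is $\E_1\circ\EE$. The gap: the proposition makes no GNS-faithfulness assumption on $\E_1$ (handling degenerate expectations is the whole point of the section), so $\E_1\circ\EE$ is in general not GNS-faithful on $D$, and your justification (``composite of GNS-faithful maps, tensor product of GNS modules'') does not apply --- that argument needs the intermediate map $\E_1$ to be GNS-faithful.

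What you actually need is that the \emph{restriction} of $\E_1\circ\EE$ to $Y$ is GNS-faithful. This is true, but it is exactly the delicate phenomenon the paper flags (``it is not true, in general, that the restriction of a GNS-faithful ucp map to a subalgebra is again GNS-faithful'') and it cannot be obtained softly: knowing $\E_1\bigl(\EE(y^*x^*xy)\bigr)=0$ for all $y\in Y$, or even for all $y\in D$, does not yield $\EE(y^*x^*xy)=0$ when $\E_1$ is degenerate, so the GNS-faithfulness of $\EE$ on $D$ cannot be brought to bear directly. An honest repair requires an argument of the same type as the paper's Claim --- extend from test vectors in $Y$ to all of $D$ using the moment conditions and the free position of $A_1$ relative to $Y$ --- at which point you have essentially reconstructed the paper's proof in reverse order, with the extra burden that your base algebra is $B$ rather than $A_1$. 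Two minor further points: Proposition \ref{PropkVertexReduced}(4) only requires the distinguished map into the $A_1$-slot to be faithful, so your appeal to Proposition \ref{PropEdgeReduced}(2) for faithfulness of $\nu_2$ is both unnecessary and unavailable (that assertion also presupposes GNS-faithfulness of the $\E_k$); the reduced-word bookkeeping you defer is indeed routine once the embeddings exist, and the remaining steps (generation, the bimodule property of $\EE_1$, the final universal property) are fine.
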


\begin{proof}
We prove the first point. The proof of the second point is similar. We write $\widetilde{A}=A_1\overset{1}{\underset{B}{*}}\left(A_2\overset{e}{\underset{B}{*}}A_3\right)$. Let $\rho\,:\,A_2\underset{B}{*}A_3\rightarrow A_1\overset{e}{\underset{B}{*}}A_3$ and $\widetilde{\pi}\,:\,A_1\underset{B}{*}\left( A_2\overset{e}{\underset{B}{*}}A_3\right)\rightarrow \widetilde{A}$ be the canonical surjections and $\widetilde{\EE}\,:\,\widetilde{A}\rightarrow A_1$ the canonical GNS-faithful ucp map. Define, for $k=1,2$, $\nu_k\,:\,A_k\rightarrow D$ by $\nu_1=\widetilde{\pi}\vert_D$ and $\nu_2=\widetilde{\pi}\circ\rho\vert_{A_2}$. By definition, $\nu_1(b)=\nu_2(b)$ for all $b\in B$ and $\nu_1$ is faithful. Let $C$ be the C*-subalgebra of $\widetilde{A}$ generated by $\nu_1(A_1)\cup\nu_2(A_2)$. We claim that the exists a (unique) unital faithful $*$-homomorphism $\nu\,:\,A_1\underset{B}{\overset{1}{*}}A_2\rightarrow \widetilde{A}$ such that $\nu\circ\pi_1\vert_{A_k}=\nu_k$ for $k=1,2$, where $\pi_1\,:\,A_1\underset{B}{*}A_2\rightarrow A_1\underset{B}{\overset{1}{*}}A_2$ is the canonical surjection. By the universal property of the $1$-vertex-reduced amalgamated free product, it suffices to show the following claim, where $E=\widetilde{\EE}\vert_C\,:\,C\rightarrow A_1$.

\vspace{0.2cm}

\noindent\textbf{Claim.}\textit{ The ucp map $E$ is GNS-faithful and satisfies $E\circ\nu_1=\id_{A_1}$ and, for all $a=a_1\dots a_n\in A_f$ reduced with $a_k\in A_{i_k}^\circ$,
$E(\nu_{i_1}(a_1)\dots\nu_{i_n}(a_n))=0$ whenever $n\geq 2$ or $n=1$ and $a=a_1\in A_2^\circ$.}
\vspace{0.2cm}

\noindent\textit{Proof of the claim.} The fact the $E$ vanishes on the reduced operators (not in $A_1^\circ$) is obvious, since $\widetilde{\EE}$ satisfies the same property. The only non-trivial property to check is the fact that $E$ is GNS-faithful: indeed, it is not true, in general, that the restriction of a GNS-faithful ucp map to a subalgebra is again GNS-faithful. So suppose that there exists $x\in C$ such that $E(y^*x^*xy)=0$ for all $y\in C$ and let us show that $x$ must be zero. Since $\widetilde{\EE}\,:\,\widetilde{A}\rightarrow A_1$ is GNS-faithful, it suffices to show that $\widetilde{\EE}(y^*x^*xy)=0$ for all $y\in\widetilde{A}$. By hypothesis, we know that it is true for all $y\in C$. Since $\widetilde{A}$ is the closed linear span of $\widetilde{\pi}(A_1)$ and $\widetilde{\pi}(z)$, for $z\in A_1\underset{B}{*}\left( A_2\overset{e}{\underset{B}{*}}A_3\right)$ a reduced operator not in $A_1^\circ$ and since $\widetilde{\pi}(A_1)\cup\widetilde{\pi}\circ\rho(A_2)\subset C$, it suffices to show that $\widetilde{\EE}(y^*x^*xy)=0$ for $y=\widetilde{\pi}(z)$ and  $z=z_1\dots z_n\in A_1\underset{B}{*}\left( A_1\overset{e}{\underset{B}{*}}A_3\right)$ a reduced operator with letters $z_k$ alternating from $A_1^\circ$, $\rho(A_2^\circ)$ and $\rho(A_3^\circ)$ and containing at least one letter in $\rho(A_3^\circ)$. Since one of the $z_k$ is in  $\rho(A_3^\circ)$ and $x\in C$ we have, by the property of $\widetilde{\EE}$, $\widetilde{\EE}(y^*(x^*x-\widetilde{\EE}(x^*x))y)=0$. Hence, $\widetilde{\EE}(y^*x^*xy)=\widetilde{\EE}(y^*\widetilde{\EE}(x^*x)y)=\widetilde{\EE}(y^*E(x^*x)y)=0$, since $E(x^*x)=0$.

\vspace{0.2cm}

\noindent\textit{End of the proof of the proposition.} Define, for $k=1,3$, the unital $*$-homomorphism $\eta_k\,:\,A_k\rightarrow \widetilde{A}$ by $\eta_1=\widetilde{\pi}\vert_{A_1}=\nu_1$ and $\eta_3=\widetilde{\pi}\circ\rho\vert_{A_3}$. Using the universal property of the $1$-vertex-reduced amalgamated free product one can show, using exactly the same arguments we used to construct the homomorphism $\nu$, that there exists a (necessarily unique) unital faithful $*$-homomorphism $\eta\,:\, A_1\underset{B}{\overset{1}{*}}A_3\rightarrow\widetilde{A}$ such that $\eta\circ\pi_1'\vert_{A_k}=\eta_k$ for $k=1,3$, where $\pi_1'\,:\,A_1\underset{B}{*}A_3\rightarrow A_1\underset{B}{\overset{1}{*}}A_3$ is the canonical surjection. Note that $\nu(b)=\eta(b)$ for all $b\in B$ and $\widetilde{A}$ is generated, as a C*-algebra, by $\nu(A_1\underset{B}{\overset{1}{*}}A_2)\cup\eta(A_1\underset{B}{\overset{1}{*}}A_3)$. Since the GNS-faithful ucp map $\widetilde{\EE}\,:\,\widetilde{A}\rightarrow A_1$ obviously satisfies the condition on the reduced operators we may use the universal property of the edge-reduced amalgamated free product to conclude that there exists a canonical $*$-isomorphism
$$\left(A_1\overset{1}{\underset{B}{*}}A_2\right)\overset{e}{\underset{A_1}{*}}\left( A_1\overset{1}{\underset{B}{*}}A_3\right)\rightarrow\widetilde{A}.$$
\end{proof}

\noindent Using the previous identifications one can prove the following result about completely positive radial multipliers. For $\underline{i}=(i_1,\dots,i_n)\in I$ and $l\in\{1,2\}$ we define the number
$$\underline{i}_l=\vert\{s\in\{1,\dots,n\}\,:\, i_s=l\}\vert.$$
\begin{proposition}\label{PropMultipliers}
For all $k,l\in \{1,2\}$ and all $0< r\leq 1$ there exists a unique ucp map $\varphi_r\,:\,A_{v,k}\rightarrow  A_{v,k}$ such that $\varphi_r(\pi_k(b))=\pi_k(b)$ for all $b\in B$ and,
$$\varphi_r(\pi_k(a_1\dots a_n))=r^{\underline{i}_l}\pi_k(a_1\dots a_n)\text{ for all }a_1\dots a_n\in A_f\text{ reduced with }a_k\in A_{i_k}^\circ\text{ and }\underline{i}=(i_1,\dots,i_n).$$
\end{proposition}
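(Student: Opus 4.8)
The plan is to realize the radial multiplier $\varphi_r$ as a composition of canonical maps between iterated vertex- and edge-reduced free products, using the associativity isomorphism of Proposition \ref{CorVertex-EdgeReduced} together with an auxiliary free factor on which a ``radial-type'' conditional expectation visibly lives. Concretely, fix $k,l$ and $0<r\le 1$. The key observation is that the length function $\underline i\mapsto \underline i_l$ counting the number of letters from $A_l$ is additive under concatenation of reduced words, so $r^{\underline i_l}$ behaves multiplicatively; hence it is a natural candidate for the restriction to $A_{v,k}$ of a conditional expectation coming from a free product with a commutative algebra. The model case is $l=k$: one wants a ucp map on $A_{v,k}$ that contracts each $A_k^\circ$-letter by $r$ and fixes everything from $A_{\overline k}$ and from $B$.

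First I would treat the generating case. Introduce the free factor $C(\mathbb{T})$ (or $C^*(\mathbb{Z})$) amalgamated with $B$ over $\mathbb{C}\cdot 1$, i.e. consider $B\otimes C(\mathbb{T})$ with its canonical conditional expectation $\mathrm{id}\otimes\tau$ onto $B$, where $\tau$ is integration against Haar measure; better, to get the contraction by $r$ rather than by $0$, I would instead use the Poisson-type ucp map $P_r$ on $C^*(\mathbb{Z})$ sending the generator $g^m$ to $r^{|m|}g^m$, which is the compression by the Poisson kernel and is well known to be ucp. Form the vertex-reduced free product $\widetilde A=A_{v,k}\,{}^{k}_{A_k}\!*\,\bigl(A_k\otimes C^*(\mathbb{Z})\bigr)$ (amalgamated over $A_k$, with respect to the GNS-faithful conditional expectation $\mathbb{E}_k$ on the first factor and $\mathrm{id}\otimes\tau$ on the second). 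Inside $\widetilde A$ sits a unitary $u$ implementing $C^*(\mathbb{Z})$, and conjugation by $u$ — or rather the map $x\mapsto$ the ``$u$-twisted'' copy — lets one build a ucp map on $A_{v,k}$ by sandwiching with $u$ and applying the slice map $\mathrm{id}\otimes P_r$, then using the conditional expectation $\widetilde A\to A_{v,k}$ to land back. The point is that each letter from $A_k^\circ$, when conjugated appropriately, acquires exactly one factor of the $C^*(\mathbb{Z})$-generator, so the slice map multiplies it by $r$, while $B$ and $A_{\overline k}$ are untouched; the conditional expectation from $\widetilde A$ onto $A_{v,k}$ is GNS-faithful and its restriction to the twisted subalgebra recovers the reduced words with the claimed scalars. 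Uniqueness is immediate because $A_f$ is the closed linear span of $B$ and the reduced operators, so a ucp map is determined by its values there.

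Second, I would reduce the general $l$ to the case $l=k$. If $l=\overline k$, one symmetrizes the construction by tensoring the $A_{\overline k}$-side: view $A_{v,k}\simeq A_{v,\overline k}$ after applying the reindexing unitary $u_{k,k}$ (the two vertex-reduced free products are canonically isomorphic as C*-algebras, only the distinguished conditional expectation differs), and then run the argument with $\overline k$ in place of $k$. The scalar attached to a reduced word $a_1\cdots a_n$ with type $\underline i$ is then $r^{\underline i_{\overline k}}$, as desired. Because the two cases $l=k$ and $l=\overline k$ together cover all $k,l\in\{1,2\}$, this finishes the construction, and complete positivity is automatic throughout since every building block ($P_r$, slice maps, conditional expectations, $*$-isomorphisms) is ucp.

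**The main obstacle.** The delicate point is verifying that the ucp map produced by the sandwich construction actually has the stated effect $a_1\cdots a_n\mapsto r^{\underline i_l}\,a_1\cdots a_n$ on \emph{every} reduced word, and not, say, an effect that depends on the word beyond its type — this requires a careful bookkeeping of how conjugation by the auxiliary unitary $u$ interacts with the free-product word decomposition in $\widetilde A$, i.e. that conjugating a length-$n$ reduced word threads in precisely $\underline i_l$ copies of the $C^*(\mathbb{Z})$-generator and that the intervening $B$-amalgamation does not absorb or create extra copies. Equivalently, one must check that the composite slice-then-expect map, restricted to $\pi_k(A_f)$, is given by the claimed diagonal formula; I expect this to follow from a direct computation on reduced words using the explicit GNS formulas $(\ref{EqGNS})$, but it is the one step where the combinatorics must be done honestly rather than invoked. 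An alternative, possibly cleaner, route — which I would try in parallel — is to bypass the auxiliary algebra and instead define $\varphi_r$ directly on the GNS space $H_{k,k}$ as a compression by a diagonalizable operator that scales the $\underline i$-summand by $r^{\underline i_l}$, then prove complete positivity by exhibiting it as a limit/integral of conjugations by unitaries of the form $\mathrm{diag}(r^{\underline i_l})$-valued Poisson averages; Proposition \ref{CorVertex-EdgeReduced} would then serve only to organize the argument conceptually. Either way, the existence half is the substance and the uniqueness half is a one-line density argument.
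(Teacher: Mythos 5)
Your overall strategy (adjoin a commutative free factor, conjugate by a unitary from it, come back down with the conditional expectation onto $A_{v,k}$, check the formula by induction on reduced words, and get uniqueness from density) is the same skeleton as the paper's proof, and the uniqueness half is indeed a one-line density argument. But the existence half, which you correctly identify as the substance, has concrete defects. First, the auxiliary factor is attached in the wrong place. In $\widetilde A=A_{v,k}\overset{k}{\underset{A_k}{*}}(A_k\otimes C^*(\Z))$ the unitary $u=1\otimes g$ commutes with the amalgam $A_k$ and is free from $A_{v,k}$ over $A_k$; consequently conjugation by $u$ fixes every $A_k^\circ$-letter (contradicting your claim that each such letter ``acquires exactly one factor of the generator'') and treats an entire reduced word $a_1\dots a_n\in\ker\EE_k$ as a single letter, so the expectation returns one scalar factor for the whole word, never $r^{\underline i_l}$. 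To make conjugation act letter-by-letter you must place the auxiliary algebra at the level of the $B$-amalgamated decomposition and invoke the re-association of Proposition \ref{CorVertex-EdgeReduced}: the paper rewrites $A_{v,1}\overset{e}{\underset{A_1}{*}}\bigl(A_1\overset{1}{\underset{B}{*}}(C([0,1])\otimes B)\bigr)$ as $A_1\overset{1}{\underset{B}{*}}\bigl(A_2\overset{e}{\underset{B}{*}}(C([0,1])\otimes B)\bigr)$, so that the conjugation is the free product $\id_{A_1}*\mathrm{Ad}(u_t)$ over $B$ and threads one $u_t(\cdot)u_t^*$ pair around each individual $A_2^\circ$-letter.

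Second, your scalar-producing mechanism does not produce $r$. The generator of $C^*(\Z)$ is a Haar unitary, $\tau(u)=0$, so writing $u=(u-\tau(u))+\tau(u)$ and applying the expectation kills every conjugated letter: the factor per letter is $|\tau(u)|^2=0$. The Poisson map $P_r$ does not repair this: there is no canonical ucp extension of $\id\otimes P_r$ to the free product (that would be a Boca-type free-product-of-ucp-maps theorem, which is not available off the shelf for the vertex-reduced product with degenerate expectations and would beg the question), and the natural extension obtained by averaging the gauge automorphisms $g\mapsto zg$ fixes $uxu^*$, so the composite is still $0$. The paper's solution is simply to choose a unitary with nonzero mean, $v_t(x)=e^{2i\pi tx}\in C([0,1])$, so that the expectation alone yields $|\tau(v_t)|^2=|\sin(\pi t)/\pi t|^2$ per letter, which sweeps out all of $(0,1]$. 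Finally, your reduction of general $l$ to $l=k$ via ``$A_{v,k}\simeq A_{v,\overline k}$ by the reindexing unitary'' is false: $u_{k,l}$ intertwines $H_{k,l}$ with $H_{\overline k,l}$ for the \emph{same} right index $l$, and Example \ref{ExDeg2} shows $A_{v,\overline k}\simeq A_{\overline k}$ can be a proper quotient of $A_{v,k}$ when $E_k$ is a homomorphism. Both values of $l$ must be realized as maps on the same algebra $A_{v,k}$, which the paper achieves by choosing which of the two free factors gets conjugated by $u_t$ in the automorphism $\alpha_t$.
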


\begin{proof}
We first prove the proposition for $k=1$. We separate the proof in two cases.
\vspace{0.2cm}

\noindent\textbf{Case 1: $l=2$.} Since $\pi_1$ is faithful on $A_1$, we may and will view $A_1\subset A_{v,1}$. After this identification, the canonical GNS-faithful ucp map $\EE_1\,:\,A_{v,1}\rightarrow A_1$ becomes a conditonal expectation. Consider the conditional expectation $\tau\ot \id\,:\, C([0,1])\ot B\rightarrow B$, where $\tau$ is the integral with respect to the normalized Lebesgue measure on $[0,1]$. We will also view $A_1\subset A_1\underset{B}{\overset{1}{*}}(C([0,1])\ot B)$ so that the canonical GNS-faithful ucp map $\widetilde{\EE}_1\,:\,A_1\underset{B}{\overset{1}{*}}(C([0,1])\ot B)\rightarrow A_1$ is a conditional expectation. Define $\widetilde{A}=A_{v,1}\overset{e}{\underset{A_1}{*}}\left(A_1\overset{1}{\underset{B}{*}}(C([0,1])\ot B)\right)$ with respect to the conditional expectations $\EE_1$ and $\widetilde{\EE}_1$. Since $\EE_1$ and $\widetilde{\EE}_1$ are GNS-faithful, the edge-reduced and the $k$-vertex-reduced amalgamated free products coincides for $k=1,2$. Hence, we may and will view $A_{v,1}\subset A_1\overset{1}{\underset{B}{*}}(C([0,1])\ot B)\subset \widetilde{A}$ and we have a canonical GNS-faithful conditional expectation $\widetilde{E}\,:\, \widetilde{A}\rightarrow A_{v,1}$. Also, by the first assertion of proposition \ref{CorVertex-EdgeReduced} we have a canonical identification $\widetilde{A}= A_1\overset{1}{\underset{B}{*}}\widetilde{A}_2$, where $\widetilde{A}_2=A_2\overset{e}{\underset{B}{*}}(C([0,1])\ot B)$. Let $\widetilde{\rho}_2\,:\,A_2\underset{B}{*} C([0,1])\ot B\rightarrow \widetilde{A}_2$ be the canonical surjection from the full to the edge-reduced amalgamated free product and $\widetilde{\pi}\,:\,A_1\underset{B}{*}\widetilde{A}_2\rightarrow A_1\overset{1}{\underset{B}{*}}\widetilde{A}_2=\widetilde{A}$  be the canonical surjection from the full to the vertex-reduced amalgamated free product. Fix $t\in\R$ and define the unitary $v_t\in C([0,1])$ by $v_t(x)=e^{2i\pi tx}$. Let $\rho_t=\vert\tau(v_t)\vert^2$ and $u_t=\widetilde{\pi}\circ\widetilde{\rho}_2(v_t\ot 1_B)\in \widetilde{A}$. Define the unital $*$-homomorphisms $\nu_1=\widetilde{\pi}\vert_{A_1}\,:\, A_1\rightarrow\widetilde{A}$ and $\nu_2\,:\,\widetilde{A}_2\rightarrow\widetilde{A}$ by $\nu_2(x)=u_t\widetilde{\pi}(x)u_t^*$. Note that $\nu_1$ is faithful. To simplify the notations we put $\widetilde{A}_1:=A_1$.
\vspace{0.2cm}

\noindent\textbf{Claim.}\textit{ For all $x=x_1\dots x_n\in A_1\underset{B}{*}\widetilde{A}_2$ reduced with $a_k\in \widetilde{A}_{i_k}^\circ$ and $\underline{i}=(i_1,\dots,i_n)$ one has:
$$\widetilde{\EE}(\nu_{i_1}(x_1)\dots\nu_{i_n}(x_n))=\left\{\begin{array}{lcl}
\rho_t^{\underline{i}_l}\widetilde{\pi}(x_1\dots x_n)&\text{if}&\widetilde{\pi}(x)\in A_{v,1},\\
0&\textit{if}&\widetilde{\EE}(\widetilde{\pi}(x))=0.\end{array}\right.$$}

\noindent\textit{Proof of the claim.} Note that $\widetilde{\pi}(x)\in A_{v,1}$ if and only if the letters $x_k$ of $x$ are alternating from $A_1^\circ$ and $\widetilde{\rho}_2(A_2^\circ)$ and $\widetilde{\EE}(\widetilde{\pi}(x))=0$ if and only if one of the letters of $x$ comes from $\rho_2((C([0,1])\ot B)^\circ)$. We prove the formula by induction on $n$. If $n=1$ we have either $x\in A_1^\circ$ in that case $\widetilde{\EE}(\nu_1(x))=\widetilde{\EE}(\widetilde{\pi}((x))=\widetilde{\pi}(x)$ or $x\in \widetilde{\rho}_2(\widetilde{A}_2^\circ)$ and

\begin{eqnarray*}
\widetilde{\EE}(\nu_2(x))&=&\widetilde{\EE}(u_t\widetilde{\pi}(x)u_t^*)\\
&=&\widetilde{\EE}((u_t-\tau(v_t))\widetilde{\pi}(x)(u_t^*-\overline{\tau(v_t)}))+\tau(v_t)\widetilde{\EE}(\widetilde{\pi}(x)(u_t^*-\overline{\tau(v_t)}))\\
&&+\overline{\tau(v_t)}\widetilde{\EE}((u_t-\tau(v_t))\widetilde{\pi}(x))+\vert\tau(v_t)\vert^2\widetilde{\EE}(\widetilde{\pi}(x))\\
&=&\vert\tau(v_t)\vert^2\widetilde{\EE}(\widetilde{\pi}(x))=\rho_t\widetilde{\EE}(\widetilde{\pi}(x)).
\end{eqnarray*}
\noindent Hence, $\widetilde{\EE}(\nu_2(x))=\left\{\begin{array}{lcl}\rho_t\widetilde{\pi}(x)&\text{if}&\widetilde{\pi}(x)\in A_{v,1},\\
0&\text{if}&\widetilde{\EE}(\widetilde{\pi}(x))=0.\end{array}\right.$

\noindent This proves the formula for $n=1$. Suppose that the formulae holds for a given $n\geq 1$. Let $x=x_1\dots x_{n+1}$ be reduced with $x_k\in \widetilde{A}_{i_k}^\circ$ and define $x'=x_1\dots x_n$ and $z=\nu_{i_1}(x_1)\dots\nu_{i_n}(x_n)$. Let $\underline{i}=(i_1,\dots ,i_{n+1})$ and $\underline{i}'=(i_1,\dots,i_{n})$.

\vspace{0.2cm}

\noindent Suppose that $x_{n+1}\in A_1^\circ$. Then $\underline{i}_2=\underline{i}'_2$ and,
$$\widetilde{\EE}(\nu_{i_1}(x_1)\dots\nu_{i_n}(x_n)\nu_{i_{n+1}}(x_{n+1}))=\widetilde{\EE}(\nu_{i_1}(x_1)\dots\nu_{i_n}(x_n)\widetilde{\pi}(x_{n+1}))=\widetilde{\EE}(z)\widetilde{\pi}(x_{n+1}).$$
Hence, if $\widetilde{\pi}(x)\in A_{v,1}$ then also $\widetilde{\pi}(x')\in A_{v,1}$ and we have, by the induction hypothesis, 
$$\widetilde{\EE}(\nu_{i_1}(x_1)\dots\nu_{i_n}(x_n)\nu_{i_{n+1}}(x_{n+1}))=\rho_t^{\underline{i}'_2}\widetilde{\pi}(x')\widetilde{\pi}(x_{n+1})=\rho_t^{\underline{i}_2}\widetilde{\pi}(x).$$
If $\widetilde{\EE}(\widetilde{\pi}(x))=0$ then also $\widetilde{\EE}(\widetilde{\pi}(x'))=0$ and we have, by the induction hypothesis, $\widetilde{\EE}(z)=0$ so $\widetilde{\EE}(\nu_{i_1}(x_1)\dots\nu_{i_n}(x_n)\nu_{i_{n+1}}(x_{n+1}))=0$.

\vspace{0.2cm}

\noindent Suppose now that $x_{n+1}\in \widetilde{A}_2^\circ$ then $x_n\in A_1^\circ$ and we have,
\begin{eqnarray*}
\widetilde{\EE}(z\nu_{i_{n+1}}(x_{n+1}))&=&\widetilde{\EE}(zu_t\widetilde{\pi}(x_{n+1})u_t^*)\\
&=&\widetilde{\EE}(z(u_t-\tau(v_t))\widetilde{\pi}(x_{n+1})(u_t^*-\overline{\tau(v_t)}))+\tau(v_t)\widetilde{\EE}(z\widetilde{\pi}(x_{n+1})(u_t^*-\overline{\tau(v_t)}))\\
&&+\overline{\tau(v_t)}\widetilde{\EE}(z(u_t-\tau(v_t))\widetilde{\pi}(x_{n+1}))+\vert\tau(v_t)\vert^2\widetilde{\EE}(z\widetilde{\pi}(x_{n+1}))\\
&=&\vert\tau(v_t)\vert^2\widetilde{\EE}(z\widetilde{\pi}(x_{n+1}))=\rho_t\widetilde{\EE}(z\widetilde{\pi}(x_{n+1})).
\end{eqnarray*}
\noindent Hence, if $\widetilde{\pi}(x)\in A_{v,1}$ then also $\widetilde{\pi}(x')\in A_{v,1}$ and $x_{n+1}\in A_2^\circ$ so $\widetilde{\pi}(x_{n+1})\in A_{v,1}$ and $\underline{i}_2=\underline{i}_2'+1$. By the preceding computation and the induction hypothesis we find:
$$\widetilde{\EE}(z\nu_{i_{n+1}}(x_{n+1}))=\rho_t\widetilde{\EE}(z\widetilde{\pi}(x_{n+1}))=\rho_t\widetilde{\EE}(z)\widetilde{\pi}(x_{n+1})=\rho_t\rho_t^{\underline{i}_2'}\widetilde{\pi}(x')\widetilde{\pi}(x_{n+1})=\rho_t^{\underline{i}_2}\widetilde{\pi}(x).$$
\noindent Finally, if $\widetilde{\EE}(\widetilde{\pi}(x))=0$, we need to prove that $\widetilde{\EE}(z\widetilde{\pi}(x_{n+1}))=0$. Note that, since $x_n\in A_1^\circ$, we have $z=\nu_{i_1}(x_1)\dots\nu_{i_{n-1}}(x_{n-1})x_n$. Hence, if $\widetilde{\EE}(\widetilde{\pi}(x'))=0$ so by the induction hypothesis we have $\widetilde{\EE}(z)=0$, $z$ may be written as a sum of reduced operators, containing at least one letter from $\widetilde{\rho}_2(( C([0,1])\ot B)^\circ)$ and ending with a letter from $A_1^\circ$. It follows that $z\widetilde{\pi}(x_{n+1})$ may be written as a sum of reduced operators, containing at least one letter from $\widetilde{\rho}_2(( C([0,1])\ot B)^\circ)$. Hence, $\widetilde{\EE}(z\widetilde{\pi}(x_{n+1}))=0$. Eventually, if $\widetilde{\EE}(\widetilde{\pi}(x))=0$ and $\widetilde{\EE}(\widetilde{\pi}(x'))\in A_{v,1}$ then, $x_1,\dots x_n\in A_1^\circ\cup A_2^\circ$ but $\widetilde{\EE}(\widetilde{\pi}(x_{n+1}))=0$. It follows that $z=\nu_{i_1}(x_1)\dots\nu_{i_{n-1}}(x_{n-1})x_n$ may be written as a sum of reduced operators ending with a letter from $A_1^\circ$. Hence, $z\widetilde{\pi}(x_{n+1})$ be be written as a sum of reduced operators containing at least one letter from $\widetilde{\rho}_2(( C([0,1])\ot B)^\circ)$. Hence, $\widetilde{\EE}(z\widetilde{\pi}(x_{n+1}))=0$.

\vspace{0.2cm}

\noindent\textit{End of the proof of the proposition.} By the claim, $\EE_1\circ\widetilde{\EE}(\nu_{i_1}(x_1)\dots\nu_{i_n}(x_n))=0$ for all reduced operators $x=x_1\dots x_n\in A_1\underset{B}{*}\widetilde{A}_2$ which are not in $A_1$ and, we obviously have,  $\EE_1\circ\widetilde{\EE}\circ\nu_1=\id_{A_1}$. Viewing $\widetilde{A}= A_1\overset{1}{\underset{B}{*}}\widetilde{A}_2$ and using the universal property of the vertex-reduced amalgamated free product, there exists, for all $t\in\R$, a unique unital $*$-isomorphism $\alpha_t\,:\,\widetilde{A}\rightarrow\widetilde{A}$ such that $\alpha_t(\widetilde{\pi}(a))=\widetilde{\pi}(a)$ if $a\in A_1$ and $\alpha_t((\widetilde{\pi}(x))=u_t\widetilde{\pi}(x)u_t^*$ if $x\in A_2\overset{e}{\underset{B}{*}}(C([0,1])\ot B)$. In particular, it follows from the claim that $\widetilde{\EE}\circ\alpha_t\vert_{A_{v,1}}\,:\,A_{v,1}\rightarrow A_{v,1}$, which is a ucp map, satisfies the properties of the map $\varphi_r$ described in the statement of the proposition, with $r=\rho_t=\left\vert\frac{\sin(\pi t)}{\pi t}\right\vert^2$. This concludes the proof. 
\vspace{0.2cm}

\noindent\textbf{Case 2: $l=1$.} The proof is similar. This time, the automorphism $\alpha_t\,:\,\widetilde{A}\rightarrow\widetilde{A}$ is defined, by the universal property, starting with the maps $\nu_1=\widetilde{\pi}\vert_{A_1}\,:\, A_1\rightarrow\widetilde{A}$ and $\nu_2\,:\,\widetilde{A}_2\rightarrow\widetilde{A}$ defined by $\nu_1(a)=u_t\widetilde{\pi}(a)u_t^*$ and $\nu_2(x)=\widetilde{\pi}(x)$. The reminder of the proof is the same.

\vspace{0.2cm}

\noindent The proof for $k=2$ is the same, using the second assertion of proposition \ref{CorVertex-EdgeReduced}.
\end{proof}

\section{$K$-equivalence between the full and reduced amalgamated free products}
\noindent Let $A_1$, $A_2$ be two unital C*-algebra with a common C*-subalgebra $B\subset A_k$, $k=1,2$ and denote by $A_f$ the full amalgamated free product.

\vspace{0.2cm}

\noindent Let $A:=A_1\overset{v}{\underset{B}{*}} A_2$ be the vertex-reduced amalgamated free product. For $k=1,2$, let $E_{A_k}$ (resp. $E_B$) be the canonical conditional expectation from $A$ to $A_k$ (reps. from $A$ to $B$). We will denote by the same symbol $\mathcal{A}$ the set of reduced operators viewed in $A$ or in $A_f$. Recall that the linear span of $\mathcal{A}$ and $B$ is a weakly dense unital $*$-subalgebra of $A$ (resp. $A_f$).

\vspace{0.2cm}

\noindent We denote by $\lambda\,:\,A_f\rightarrow A$ the canonical surjective unital $*$-homomorphism which is the identity on $\mathcal{A}$. In this section we prove the following result.

\begin{theorem}\label{TheoremKequivalence}
$[\lambda]\in {\rm KK}(A_f,A)$ is invertible.
\end{theorem}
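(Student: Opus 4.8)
The plan is to follow the classical strategy used by Cuntz and Germain: construct an explicit path algebra that contains both $A_f$ and $A$ as (co)retracts up to homotopy, and exhibit the surjection $\lambda$ as a deformation retraction in $KK$-theory via a rotation trick. Concretely, I would introduce the ``mapping cylinder'' type algebra
\[
D=\left\{f\in C([0,1],A_f)\,:\, f(0)\in B\right\},
\]
or rather a version built from the free product $A_1*A_2$ of the two inclusions, where $f$ is required to take values in $A_1$ near $0$ and in $A_2$ near $1$ after an appropriate reparametrization, as sketched in the introduction for Section~4. There is an obvious evaluation map $\mathrm{ev}\colon D\to A_f$ (gluing the two halves) and a natural $*$-homomorphism $\iota\colon A_f\to D$ obtained from the two inclusions $A_k\hookrightarrow A_f$. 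The composition $\mathrm{ev}\circ\iota$ is the identity on $A_f$, so $[\iota][\mathrm{ev}]=1$ in $KK(A_f,A_f)$; the content of the argument is that $[\mathrm{ev}][\iota]=1$ in $KK(D,D)$, which is a homotopy statement: one writes down a path of $*$-homomorphisms $D\to D$ from $\iota\circ\mathrm{ev}$ to $\mathrm{id}_D$ using the extra interval coordinate. This shows $[\mathrm{ev}]\in KK(D,A_f)$ is invertible, hence $A_f$ is $KK$-equivalent to $D$.

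The next step is to show that the same algebra $D$, equipped with the canonical surjection $A_f\twoheadrightarrow A$, descends to a $KK$-equivalence with $A$ — i.e. that the analogous construction performed with $A$ in place of $A_f$ yields a $KK$-equivalent algebra and that $\lambda$ intertwines the two. Here is where I would invoke the machinery already set up in the excerpt: by Proposition~\ref{PropVertexReduced}(1), $\pi$ (and hence $\lambda$) is faithful on each $A_k$, and by Proposition~\ref{PropVertexReduced}(2) the pair $\{\EE_{A_1},\EE_{A_2}\}$ is GNS-faithful. The idea is that the only thing $\lambda$ collapses lives ``between'' the vertex algebras, and the rotation homotopy in $D$ never mixes the two vertices — it only moves the interval parameter — so the homotopy is compatible with passing to the quotient $A$. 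Thus one gets a commuting square of evaluation maps whose vertical $\lambda$-maps are all $KK$-equivalences if one of them is, reducing everything to the single statement $[\mathrm{ev}_{A_f}]$ invertible, which was the previous paragraph.

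A cleaner route, which I would actually prefer, is to avoid re-running the argument twice: build one path algebra $\mathcal D$ sitting over $A$ (using the inclusions $A_k\hookrightarrow A$, which are faithful by Proposition~\ref{PropVertexReduced}(1)) together with the canonical map $\mathcal D\to A$, show $[\mathrm{ev}_{A}]\in KK(\mathcal D,A)$ is invertible by the same rotation homotopy, and then observe that $\lambda$ lifts to a $*$-homomorphism $\Lambda\colon D\to\mathcal D$ between the two path algebras making the square
\[
\begin{array}{ccc}
D&\xrightarrow{\ \mathrm{ev}_{A_f}\ }&A_f\\
\Lambda\downarrow& &\downarrow\lambda\\
\mathcal D&\xrightarrow{\ \mathrm{ev}_{A}\ }&A
\end{array}
\]
commute. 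Both $D$ and $\mathcal D$ are built from the \emph{same} free product $A_1*_{\!}A_2$ of the (faithful) inclusions — the vertex-reduced structure only affects the norm on the amalgamated free product, not on the pieces $A_1,A_2,B$ that genuinely appear in the fibres of the path algebra — so in fact $D$ and $\mathcal D$ should be literally isomorphic, and $\Lambda$ an isomorphism; then $[\lambda]=[\mathrm{ev}_A][\Lambda][\mathrm{ev}_{A_f}]^{-1}$ is a product of invertibles, hence invertible.

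The main obstacle I anticipate is the second paragraph's claim that $D\cong\mathcal D$ (equivalently, that the rotation homotopy is well defined at the reduced level): one must check carefully that the relevant path algebra genuinely depends only on the data $(A_1,A_2,B)$ and the conditional expectations, and not on which completion of $A_f$ one starts from — in other words, that $C([0,1])$-valued functions supported near the endpoints, where values lie in $A_1$ or $A_2$ respectively, impose the \emph{same} relations whether ambient norm is $\|\cdot\|_f$ or $\|\cdot\|_v$. Making this precise is exactly the place where one needs $\lambda|_{A_k}$ faithful and the compatibility of the rotation with the quotient; everything else (the existence of $\iota$, the retraction identity $\mathrm{ev}\circ\iota=\mathrm{id}$, and the contractibility homotopy on the interval) is formal. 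Once that identification is in place, Theorem~\ref{TheoremKequivalence} follows immediately.
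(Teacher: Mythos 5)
Your proposal has two genuine gaps, one structural and one analytic.

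Structurally, the path algebra you describe does not relate to $A_f$ in the way you need. Your first candidate, $\{f\in C([0,1],A_f): f(0)\in B\}$, deformation retracts onto $B$ (shrink the interval parameter), so it is not $KK$-equivalent to $A_f$ in general. The paper's actual $D$ (functions on $]-1,1[$ valued in $A_1$ on the left, $A_2$ on the right, $B$ at $0$) is the mapping cone of $i_1\oplus i_2\colon B\to A_1\oplus A_2$; there is no $*$-homomorphism $\iota\colon A_f\to D$ splitting an evaluation map (evaluation at a negative point would yield a $*$-homomorphism $A_f\to A_1$ restricting to the identity on $A_1$, which does not exist in general), and there is no single ``gluing'' map $D\to A_f$ either -- the natural map is the inclusion $j\colon D\to S\otimes A_f$ into the \emph{suspension}. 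The statement that $[j]$ is invertible is Theorem \ref{bigthm} of Section 4, and its proof explicitly uses Proposition \ref{sub-K-equivalence}, i.e.\ half of the proof of Theorem \ref{TheoremKequivalence}; so deriving Theorem \ref{TheoremKequivalence} from the path-algebra equivalence is circular as you have set it up. (Your observation that $D\cong\mathcal D$ is correct precisely because the fibres of $D$ only involve $A_1$, $A_2$ and $B$, on which $\lambda$ is isometric -- but for the same reason that isomorphism carries no information about $\lambda$.)

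Analytically, the step you label ``formal'' -- that the rotation homotopy is compatible with passing to the quotient $A$ -- is exactly the hard point of the theorem, and it does not follow from faithfulness of $\lambda$ on the $A_k$ plus GNS-faithfulness of $\{\EE_{A_1},\EE_{A_2}\}$ alone. The paper's proof constructs an explicit $\alpha\in {\rm KK}(A,A_f)$ from the GNS modules of $\EE_{A_1},\EE_{A_2},\EE_B$ and partial isometries $F_1,F_2$, and proves $[\lambda]\otimes_A\alpha=[\id_{A_f}]$ by a rotation trick; there the rotated representation $\pi_t$ is defined on $A_f$ by universality, which is the easy direction. For $\alpha\otimes_{A_f}[\lambda]=[\id_A]$ the same rotation produces representations $\pi_t$ of $A_f$ on an $A$-module, and one must prove $\ker(\lambda)\subset\ker(\pi_t)$ for every $t$. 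This is done by computing the matrix coefficients $\langle u_t^*(\xi_1\otimes 1),\pi_t(a_1\cdots a_n)\,u_t^*(\xi_1\otimes 1)\rangle=\sin^{2k}(t)\,a_1\cdots a_n$ on reduced words and recognizing them as values of the completely positive radial multipliers of Proposition \ref{PropMultipliers}, which gives the norm estimate $\Vert\varphi_t(\pi_t(a))\Vert_A\leq\Vert\lambda(a)\Vert$ and hence the factorization. Your proposal contains no substitute for this estimate; without it nothing prevents the rotated representation (or the analogous homotopy in your path algebra) from detecting elements of $\ker(\lambda)$, and $\lambda$ is very far from injective in general. This is the missing idea, and it is where the specific structure of the vertex-reduced free product (via Propositions \ref{CorVertex-EdgeReduced} and \ref{PropMultipliers}) genuinely enters.
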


\noindent The following lemma is well known (see \cite[Lemma 3.1]{Ve04}). We include a proof for the convenience of the reader.

\begin{lemma}\label{LemmaCE}
Let $n\geq 1$, $a_k\in A_{l_k}^{\circ}$ for $1\leq k\leq n$, and $a=a_1\dots a_n\in A$ a reduced word. One has
$$E_{A_k}(a^*a)=E_B(a^*a)\quad\text{whenever}\quad l_n\neq k.$$
\end{lemma}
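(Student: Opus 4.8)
The statement asserts that for a reduced word $a = a_1 \cdots a_n$ with $a_k \in A_{l_k}^\circ$ and $l_n \neq k$, one has $E_{A_k}(a^*a) = E_B(a^*a)$. The plan is to work concretely in the GNS picture of the vertex-reduced free product, using the formulas recalled in the proof of Proposition \ref{PropkVertexReduced}. Recall that $\EE_{A_k} = \EE_k \circ \pi_{v,k}$, where $\EE_k(x) = \langle \xi_k, x\, \xi_k\rangle$ with $\xi_k = 1_{A_k} \in A_k \subset H_{k,k}$; and that $E_B$ can be obtained by composing $E_k$ with $E_{A_k}$ (since $B \subset A_k$ and $E_k : A_k \to B$ is a conditional expectation), so it suffices to compute $E_{A_k}(a^*a)$ and check it already lies in $B$, i.e.\ that $E_{A_k}(a^*a) \in B$ and equals its own image under $E_k$ composed appropriately — more precisely, one wants $E_{A_k}(a^*a) = E_B(a^*a)$, and since $E_B = E_k \circ E_{A_k}$, this is equivalent to showing $E_{A_k}(a^*a) \in B$ (because $E_k$ restricts to the identity on $B$). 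So the real content is: \emph{$E_{A_k}(a^*a) \in B$ when $l_n \neq k$.}

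First I would reduce to the case $l_1 = l_n$ by the standard trick: write $a^*a = a_n^* \cdots a_1^* a_1 \cdots a_n$; if $l_1 \neq l_n$ this is already reduced (no middle cancellation), hence $E_{A_k}(a^*a) = 0 \in B$ and we are done immediately. So assume $l_1 = l_n =: l \neq k$. Then in $a^*a$ the two innermost letters $a_1^* a_1$ both lie in $A_l$; write $a_1^* a_1 = E_l(a_1^* a_1) + (a_1^* a_1)^\circ$ with $(a_1^* a_1)^\circ \in A_l^\circ$. The term involving $(a_1^* a_1)^\circ$ produces a reduced word of length $2n-1$ with outer letters in $A_l^\circ$, $l \neq k$, so $\EE_{A_k}$ kills it. We are left with $E_l(a_1^*a_1) \in B$ sitting between $a_{n}^*\cdots a_2^*$ and $a_2 \cdots a_n$; absorb $E_l(a_1^*a_1)$ into $a_2$ (replacing $a_2$ by $a_2 E_l(a_1^*a_1) a_2^*$-type expression) or, cleanly, induct: $a_n^* \cdots a_2^*\, E_l(a_1^*a_1)\, a_2 \cdots a_n$ is of the same form with $n-1$ in place of $n$ (after re-expanding $a_2 E_l(a_1^*a_1)$ into its $E_l$-part and $A_l^\circ$-part and peeling off the reduced pieces). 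I would set this up as a clean downward induction on $n$, with the base case $n = 1$: then $a = a_1 \in A_l^\circ$ with $l \neq k$, $a^*a = a_1^*a_1 \in A_l$, and $E_{A_k}(a_1^*a_1) = \pi_k\big(E_l(a_1^*a_1)\big)$ because, expanding $a_1^* a_1 = E_l(a_1^*a_1) + (a_1^*a_1)^\circ$, the reduced part is killed by $\EE_{A_k}$ (length-one reduced word in $A_l^\circ$, $l = \overline k$), leaving exactly $E_l(a_1^*a_1) \in B$.

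The cleanest way to actually verify the vanishing claims is via the explicit GNS formulas $(\ref{EqGNS})$: apply $\pi_k(a^*a)$ to $\xi_k$, which is a sum of simple tensors in $H_{k,k}$, and take the inner product with $\xi_k = 1_{A_k}$. Since $\xi_k$ lives in the length-one summand $A_k \subset H_{k,k}$, only the components of $\pi_k(a^*a)\xi_k$ lying in that summand contribute, and by the reduction above those are exactly the ones coming from fully-contracted terms, which land in $B \subset A_k = H_{(k)}$. The main obstacle — really a bookkeeping obstacle rather than a conceptual one — is handling the repeated expansion $a_j\, b\, a_j^* = E_l(a_j b a_j^*) + (\cdots)^\circ$ and keeping track of which summands remain reduced words with outer index $\neq k$; one must be careful that after absorbing a scalar-from-$B$ factor, a letter $a_j$ could leave $A_l^\circ$, but subtracting its $E_l$-part restores reducedness and the correction term is again of the inductively-controlled form. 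Once the combinatorics is organized as a single induction on the word length, the result follows.
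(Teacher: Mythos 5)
Your overall strategy (reduce to showing $E_{A_k}(a^*a)\in B$ via $E_B=E_k\circ E_{A_k}$, then peel off $a_1$ by writing $a_1^*a_1=E_{l_1}(a_1^*a_1)+(a_1^*a_1)^\circ$ and induct on $n$) is sound and is essentially the paper's argument. But there is a genuine error in your first reduction: you claim that if $l_1\neq l_n$ then $a^*a=a_n^*\cdots a_1^*\,a_1\cdots a_n$ is ``already reduced (no middle cancellation)'' and hence $E_{A_k}(a^*a)=0$. This is false. The two innermost letters $a_1^*$ and $a_1$ always lie in the \emph{same} algebra $A_{l_1}$, whatever the relation between $l_1$ and $l_n$, so $a^*a$ is never reduced as written for $n\geq 1$. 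Concretely, take $n=2$, $a=a_1a_2$ with $a_1\in A_1^\circ$, $a_2\in A_2^\circ$ and $k=1$: then $E_{A_1}(a^*a)=E_{A_1}\bigl(a_2^*E_1(a_1^*a_1)a_2\bigr)=E_2\bigl(a_2^*E_1(a_1^*a_1)a_2\bigr)$, which is a generally nonzero element of $B$ (it equals $E_B(a^*a)$, consistent with the lemma, but it is not $0$).

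This is not a dispensable side case: your induction immediately falls into it. After one peeling step you replace $a$ by $x=(ba_2)a_3\cdots a_n$ with $b=E_{l_1}(a_1^*a_1)^{1/2}$, whose first index is $l_2\neq l_1$; so even if you start with $l_1=l_n$, the word $x$ has first index different from its last index, and your argument then ``disposes'' of it with the false claim above. The fix is simply to drop the case distinction on $l_1$ versus $l_n$ altogether: for every $n\geq 2$ the correction term $a_n^*\cdots a_2^*(a_1^*a_1)^\circ a_2\cdots a_n$ is a reduced word of length $2n-1\geq 3$, hence killed by both $E_{A_k}$ and $E_B$, and one gets $E_{A_k}(a^*a)=E_{A_k}(x^*x)$ and $E_B(a^*a)=E_B(x^*x)$ with $x$ reduced of length $n-1$ still ending in $A_{l_n}^\circ$, $l_n\neq k$. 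The induction then closes (this is exactly the paper's proof, which tracks $E_{A_k}$ and $E_B$ in parallel rather than passing through your --- otherwise correct --- observation that $E_B=E_k\circ E_{A_k}$).
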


\begin{proof}
We prove it for $k=1$ by induction on $n$. The proof for $k=2$ is the same.

\vspace{0.2cm}

\noindent It's obvious for $n=1$. Suppose that $n\geq 2$, define $b=E_B(a_1^*a_1)^{\frac{1}{2}}$, $x=(ba_2)\dots a_n$. One has:
$$E_{A_1}(a^*a)=E_{A_1}(a_n^*\dots a_1^*a_1\dots a_n)=E_{A_1}(a_n^*\dots a_2^*E_B(a_1^*a_1)a_2\dots a_n)=E_{A_1}(x^*x)=E_B(x^*x),$$
where we applied the induction hypothesis to get the last equality. Since the same computation gives $E_B(a^*a)=E_B(x^*x)$, this concludes the proof.
\end{proof}

\noindent We denote by $(H_k,\pi_k,\xi_k)$ (resp. $(K,\rho,\eta)$) the GNS construction of $E_{A_k}$ (resp. $E_B$). We may and will assume that $A\subset\mathcal{L}_{A_k}(H_k)$ and $\pi_k=\id$.

\vspace{0.2cm}

\noindent Observe that the Hilbert $A_k$-module $\xi_k.A_k\subset H_k$ is orthogonally complemented i.e. $H_k=\xi_k.A_k\oplus H_k^{\circ}$, as Hilbert $A_k$-modules, where $H_k^{\circ}$ is the closure of $\{a\xi_k\,:\,a\in A,\,\,E_{A_k}(a)=0\}$.

\vspace{0.2cm}

\noindent We now define a partial isometry $F_k\in\mathcal{L}_{A_k}(H_k,K\underset{B}{\ot} A_k)$ in the following way. First we put $F_k(\xi_k.a)=0$ for all $a\in A_k$. Then, it follows from lemma \ref{LemmaCE} that we can define an isometry $F_k\,:\, H_k^{\circ}\rightarrow K\underset{B}{\ot} A_k$ by the following formula:
$$F_k(a_1\dots a_n\xi_k)=\left\{\begin{array}{lcl}
\rho(a_1\dots a_n)\eta\underset{B}{\ot} 1&\text{if}&l_n\neq k\\
\rho(a_1\dots a_{n-1})\eta\underset{B}{\ot} a_n&\text{if}& l_n=k\end{array}\right.\quad\text{for all}\quad a_1\dots a_n\in A\,\,\text{a reduced operator}.$$

\noindent Hence, $F_k\in\mathcal{L}_{A_k}(H_k,K\underset{B}{\ot} A_k)$ is a well defined partial isometry such that $1-F_k^*F_k$ is the orthogonal projection onto $\xi_k.A_k$ and, $1-F_kF_k^*$ is the orthogonal projection onto
$$(\eta\ot 1).A_k\oplus\overline{\text{Span}}\{\rho(a_1\dots a_n)\eta\ot 1\,:\,a=a_1\dots a_n\in A\,\,\text{reduced with}\,\,l_n= k\}.A_k.$$

\noindent We will denote in the sequel $q_0$ the orthogonal projection of $K$ onto $\eta.B$, and for $l=1,2$ $q_l$ the projection in $K$ such that $F_lF_l^*=q_l\otimes_{A_l} 1$.
It is clear that $1=q_1+q_2+q_0$ and that all the projections commutes.  Define also $\overline{F}_l=F_l+\theta_{\eta\otimes_B 1, \xi_l}$. It is again clear that $\overline F_l$ is
an isometry and $\overline F_l \overline F_l^*=q_l+q_0=1-q_k$ for $k\neq l$.

\begin{lemma}\label{LemmaCompactCommutation}
For $k=1,2$ the following holds.
\begin{enumerate}
\item $\rho(a)F_k=F_ka\in\mathcal{L}_{A_k}(H_k,K\underset{B}{\ot} A_k)$ for all $a\in A_k$.
\item ${\rm Im}(\rho(a)F_k-F_ka)\subset(\rho(a)\eta\underset{B}{\ot}1).A_k\oplus(\eta\underset{B}{\ot}1).A_k$ for all $a\in A_l^{\circ}$ with $l\neq k$.
\item $\rho(x)F_k-F_kx\in\mathcal{K}_{A_k}(H_k,K\underset{B}{\ot} A_k)$ for all $x\in A$.
\item $\rho(a)\overline F_k=\overline F_k a$ $\forall a\in A_l$ with $l\neq k$ and $\rho(x)\overline F_k-\overline F_kx\in\mathcal{K}_{A_k}(H_k,K\underset{B}{\ot} A_k)$ $\forall x\in A$.

\end{enumerate}
\end{lemma}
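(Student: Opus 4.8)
The plan is to verify the four assertions essentially by direct computation on reduced operators, using Lemma \ref{LemmaCE} and the explicit formula for $F_k$ already recorded. First I would treat assertion $(1)$: for $a\in A_k$ and a reduced operator $x = a_1\dots a_n$, I compute $\rho(a)F_k(x\xi_k)$ and $F_k(ax\xi_k)$ separately. When $l_n\neq k$, $F_k(x\xi_k)=\rho(a_1\dots a_n)\eta\otimes 1$, so $\rho(a)F_k(x\xi_k)=\rho(ax)\eta\otimes 1$; on the other side $ax\xi_k$ is (a sum of) reduced operators with the same last letter $a_n\in A_{l_n}^\circ$ (note $a\in A_k$ and $l_1$ may or may not equal $k$, but the right-hand end is unchanged), so applying $F_k$ gives the same vector. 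When $l_n=k$, $F_k(x\xi_k)=\rho(a_1\dots a_{n-1})\eta\otimes a_n$ and $\rho(a)F_k(x\xi_k)=\rho(a_1\dots a_{n-1})\eta\otimes a_n a$ — wait, more carefully, since $F_k$ is $A_k$-linear one checks $\rho(a)F_k$ and $F_k a$ agree on $\xi_k.A_k$ (both zero) and on $H_k^\circ$ by the above case analysis, multiplying $a$ on the left and tracking how the leftmost letter interacts; the point is that $ax$ reduces to a combination of reduced words whose $F_k$-image matches $\rho(a)$ applied to the image of $x$. The identity $\langle\xi_k,\pi_k(a)\xi_k\rangle = a$ and Lemma \ref{LemmaCE} guarantee the inner products match so that $F_k$ is well defined and the equality of bounded operators holds.

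For assertion $(2)$, take $a\in A_l^\circ$ with $l\neq k$ and a reduced operator $x=a_1\dots a_n$. The product $ax$ is again reduced provided $l_1\neq l$, in which case $F_k(ax\xi_k)$ matches $\rho(a)F_k(x\xi_k)$ by the same bookkeeping as in $(1)$, so the difference vanishes; the only ``defect'' comes from the case $l_1=l$, where $a a_1 = E_B(aa_1) + (aa_1)^\circ$ produces a shorter word (length $n-1$) plus a reduced word of length $n$, and comparing with $\rho(a)F_k(x\xi_k)$ the leftover terms land in the span of $\rho(a)\eta\otimes 1 . A_k$ and $\eta\otimes 1.A_k$ — these are exactly the two ``boundary'' summands of $K\otimes_B A_k$ one picks up when the leftmost letter is absorbed or split. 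I would also separately handle $n=1$ (where $x\in A_k^\circ$ or $x\in B$, and $ax$ has length $\le 2$). Assertion $(3)$ then follows: the linear span of $B$ and the reduced operators is dense in $A$, the maps $x\mapsto\rho(x)F_k-F_kx$ are norm-continuous and linear, so it suffices to prove compactness on this span; on $B$ and on each reduced $x$ the difference has, by $(1)$ and $(2)$, image contained in a fixed finitely-generated (hence $A_k$-compact) submodule of $K\otimes_B A_k$ — namely a finite orthogonal sum of rank-one-type pieces $(\rho(w)\eta\otimes 1).A_k$ — and an operator with finitely-generated image between Hilbert $A_k$-modules is compact.

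Assertion $(4)$ is obtained from $(1)$–$(3)$ by the perturbation $\overline F_k = F_k + \theta_{\eta\otimes_B 1,\ \xi_k}$. For $a\in A_l$ with $l\neq k$: on $H_k^\circ$ we use $(1)$ applied with the roles swapped — actually $\rho(a)F_k = F_k a$ need not hold for $a\notin A_k$, so instead I argue directly that $\rho(a)\overline F_k$ and $\overline F_k a$ agree, using that $\overline F_k$ carries $\xi_k.A_k$ onto $(\eta\otimes 1).A_k$ isometrically and that on $H_k^\circ$ the extra rank-one term contributes nothing when $a\in A_l^\circ$ while $\rho(a)F_k\xi_k.a' = F_k(a\xi_k a')$ matches because $a\xi_k = \xi_k a$ only when... here one must be careful: for $a\in B$, $a\xi_k=\xi_k a$ and the rank-one piece intertwines; for $a\in A_l^\circ$, $a\xi_k\in H_k^\circ$, $\overline F_k(a\xi_k.a') = F_k(a\xi_k.a')=\rho(a)\eta\otimes a'$ while $\rho(a)\overline F_k(\xi_k.a')=\rho(a)(\eta\otimes a')$, and these agree. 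Combining over $a\in A_l = B + A_l^\circ$ gives $\rho(a)\overline F_k=\overline F_k a$. The compactness statement $\rho(x)\overline F_k-\overline F_k x\in\mathcal K_{A_k}$ for all $x\in A$ follows from $(3)$ since $\theta_{\eta\otimes_B 1,\xi_k}$ is already compact, so the difference of the two compact perturbations is compact, and then $\rho(x)F_k - F_k x$ compact gives the claim by adding back the rank-one correction terms (which are manifestly compact).

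\textbf{Main obstacle.} The routine-looking but genuinely delicate point is assertion $(2)$ (and the corresponding bookkeeping inside $(1)$): one must carefully track, for $a\in A_l^\circ$ with $l\neq k$, all the ways $a\cdot(a_1\dots a_n)$ fails to be a single reduced word — the collapse when $l_1=l$, the further collapse if then $E_B(aa_1)a_2$ meets $a_2$ with matching index, etc. — and confirm that every resulting correction term lands in the two specified boundary summands and nowhere else. Once $(2)$ is pinned down with the correct finite list of summands, $(3)$ and $(4)$ are short formal consequences (density + continuity for $(3)$; a compact rank-one perturbation for $(4)$).
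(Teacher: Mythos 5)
Your overall route is the paper's own: direct computation on reduced words, splitting $aa_1$ into $(aa_1)^{\circ}+E_B(aa_1)$ when the leftmost indices match, then formal deductions for $(3)$ and $(4)$. But two of the steps you defer or assert are precisely where the content of the lemma sits, and one of them is asserted backwards. The ``main obstacle'' you flag in $(2)$ has a one-line resolution that you must supply: since conditional expectations are $B$-bimodular, $E_B(aa_1)a_2\in A_{l_2}^{\circ}$, so the collapsed word $E_B(aa_1)a_2\dots a_n$ is again reduced, of length $n-1$, with the same last letter --- there is no cascade. (With that in hand, note that a correction term also survives for $n=2$ with $l_1=\overline{k}$, $l_2=k$: there $E_B(aa_1)a_2$ is a single letter of $A_k^{\circ}$, hence lies in $\xi_k.A_k=\ker F_k$, and the defect $(\eta\otimes 1)E_B(aa_1)a_2$ appears; it lands in $(\eta\otimes 1).A_k$, so $(2)$ is still true, but your accounting of which words contribute is incomplete.) In $(3)$, the principle ``adjointable operator with finitely generated image is compact'' is not a theorem you can invoke; the safe conclusion, and the one the paper draws, is that the computation in $(2)$ exhibits $\rho(a)F_k-F_ka$ \emph{explicitly} as a finite sum of rank-one operators (for $k=1$ and $a\in A_2^{\circ}$ it equals $-\theta_{\rho(a)\eta\otimes_B 1,\,\xi_1}+\theta_{\eta\otimes_B 1,\,a^*\xi_1}$), which is compact by definition; compactness for general $x\in A$ then follows because $\{x:\rho(x)F_k-F_kx\in\mathcal{K}\}$ is a closed subalgebra containing $A_1\cup A_2^{\circ}$.

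The genuine error is in $(4)$. You claim that on $H_k^{\circ}$ the rank-one correction $\theta_{\eta\otimes_B 1,\,\xi_k}$ ``contributes nothing''. It is true that this operator vanishes on $H_k^{\circ}$, but $\overline{F}_k a$ applies it to $a\zeta$, and left multiplication by $a\in A_{\overline{k}}^{\circ}$ does not preserve $H_k^{\circ}$: for $x\in A_{\overline{k}}^{\circ}$ one gets $\theta_{\eta\otimes_B 1,\,\xi_k}(ax\xi_k)=(\eta\otimes 1)E_B(ax)$, which is exactly the defect $(\rho(a)F_k-F_ka)x\xi_k$ computed in $(2)$. The entire reason $\overline{F}_k$ commutes \emph{exactly} with $A_{\overline{k}}$ while $F_k$ does not is that this correction term acts nontrivially on $H_k^{\circ}$ and absorbs those defects; your verification treats only $\xi_k.A_k$, so the exact commutation --- which the paper later needs, e.g.\ to get $G_1^2=1$ on the nose in the proof of Lemma \ref{lemma-vertex} --- is not established by your argument. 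The compactness half of $(4)$ is fine as a compact perturbation of $(3)$.
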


\begin{proof}
We prove the lemma for $k=1$. The proof for $k=2$ is the same.

\vspace{0.2cm}

\noindent $(1)$. When $a\in B$ the commutation is obvious hence we may and will assume that $a\in A_1^{\circ}$. One has $F_1a\xi_1=0=\rho(a)F_1\xi_1$. Let now $n\geq 1$ and  $x=a_1\dots a_n\in A$, $a_k\in A_{l_k}^{\circ}$, be a reduced operator with $E_{A_1}(x)=0$. It suffices to show that $F_1ax\xi_1=\rho(a) F_1x\xi_1$. If $n=1$ we must have $x\in A_2^{\circ}$ and $F_1ax\xi_1=\rho(ax)\eta\ot 1 =\rho(a)F_1x\xi_1$. Suppose that $n\geq 2$. If $l_1=2$ then $ax$ is reduced and ends with a letter from $A_{l_n}^{\circ}$. It follows that $F_1ax\xi_2=\rho(a)F_1x\xi_2$. It $l_1=1$ then we can write $ax=(aa_1)^{\circ}a_2\dots a_n+ E_B(aa_1)a_2\dots a_n$. Since $a_2\dots a_n$ is reduced and ends with $l_n$ we find again that $F_1ax\xi_1=\rho(a)F_1x\xi_1$.

\vspace{0.2cm}

\noindent $(2)$. Let $a\in A_2^{\circ}$ and put $X_a=(\rho(a)\eta\underset{B}{\ot}1).A_k\oplus(\eta\underset{B}{\ot}1).A_k$. We have $F_1a\xi_1=\rho(a)\eta\ot 1$ and $\rho(a)F_1\xi_1=0$ hence, $(\rho(a)F_1-F_1a)\xi_1=-\rho(a)\eta\ot 1\in X_a$. Let now $n\geq 1$ and  $x=a_1\dots a_n\in A$, $a_k\in A_{l_k}^{\circ}$, be a reduced operator with $E_{A_1}(x)=0$. If $n=1$ we must have $x\in A_2^{\circ}$. It follows that $F_1ax\xi_1=F_1(ax)^{\circ}\xi_1+F_1E_B(ax)\xi_1=\rho((ax)^{\circ})\eta\ot 1$ and $\rho(a)F_1x\xi_1=\rho(ax)\eta\ot 1$. Hence, $(\rho(a)F_1-F_1a)x\xi_1=E_B(ax)\eta\ot 1=(\eta\ot 1).E_B(ax)\in X_a$. If $n\geq 2$, arguing as in the proof of $(1)$, we see that $F_1ax\xi_1=\rho(a)F_1x\xi_1$. Hence, ${\rm Im}(\rho(a)F_k-F_ka)\subset X_a$.
\vspace{0.2cm}

\noindent $(3)$. It is obvious since $A$ is generated, as a C*-algebra, by $A_1$ and $A_2^\circ$ and, by assertions $(1)$ $\rho(a)F_k-F_ka=0$ if $a\in A_k$ and the computation of $(2)$ shows that for $a\in A_2^\circ$, $\rho(a)F_k-F_ka= \theta(a)$ where $\theta(a)$ is the "rank one" operator that sends $\zeta \in H_k$ to $-\rho(a)\eta\otimes 1<\xi_1,\zeta>_{H_k}$. Hence  $\rho(a)F_k-F_ka\in\mathcal{K}_{A_k}(H_k,K\underset{B}{\ot} A_k)$ for all $a\in A_1\cup A_2^\circ$ and therefore for all $a\in A$.
\vspace{0.2cm}

\noindent $(4)$. The second part is obvious in view of $(3)$ as $\overline F_1$ is a compact perturbation of $F_1$, so let's concentrate on the exact commutation. Let $a\in A_2^{\circ}$. Clearly 
$\overline F_1a\xi_1=F_1a\xi_1=\rho(a)\eta\ot 1$ and $\rho(a)\overline F_1\xi_1=\rho(a)\eta\otimes 1$. 
 Let now $n\geq 1$ and  $x=a_1\dots a_n\in A$, $a_k\in A_{l_k}^{\circ}$, be a reduced operator with $E_{A_1}(x)=0$. 
 If $n=1$ we must have $x\in A_2^{\circ}$. It follows that $\overline F_1ax\xi_1=F_1(ax)^{\circ}\xi_1+\theta_{\eta\otimes 1, \xi_1}E_B(ax)\xi_1=\rho((ax)^{\circ})\eta\ot 1+E_B(ax)\eta\ot 1 $ and $\rho(a)\overline F_1x\xi_1=F_1x\xi_1=\rho(ax)\eta\ot 1$. If $n\geq 2$, arguing as in the proof of $(1)$, we see that $\overline F_1ax\xi_1=F_1ax\xi_1=\rho(a)F_1x\xi_1=\rho(a)\overline F_1x\xi_1$.
 \end{proof}
\noindent We define the following Hilbert $A_f$-modules:
$$H_m=H_1\underset{A_1}{\ot}A_f\oplus H_2\underset{A_2}{\ot}A_f\quad\text{and}\quad
K_m=K\underset{B}{\ot}A_f=\left(K\underset{B}{\ot}A_k\right)\underset{A_k}{\ot} A_f,$$
with the canonical representations $\pi\,:\,A\rightarrow\mathcal{L}_{A_f}(H_m)$, $\pi(x)=x\underset{A_1}{\ot}1_{A_f}\oplus x\underset{A_2}{\ot}1_{A_f}$ and $\bar\rho\,:\,A\rightarrow\mathcal{L}_{A_f}(K)$, $\bar\rho(x)=\rho(x)\underset{B}{\ot}1_{A_f}$.
We consider, for $k=1,2$, the partial isometry
$$F_k\underset{A_k}{\ot}1_{A_f}\in\mathcal{L}_{A_f}(H_k\underset{A_k}{\ot}A_f,(K\underset{B}{\ot}A_k)\underset{A_k}{\ot} A_f).$$
Observe that $F_1\underset{A_1}{\ot}1_{A_f}$ and $F_2\underset{A_2}{\ot}1_{A_f}$ have orthogonal images. Indeed, the image of $F_k\underset{A_k}{\ot}1_{A_f}$ is the closed linear span of $\{\rho(a_1\dots a_n)\eta\underset{B}{\ot} y\,:\,y\in A_f\,\,\text{and}\,\,a_1\dots a_n\in A\,\,\text{reduced with}\,\,a_n\notin A_{k}^{\circ}\}$. Hence the operator $F\in\mathcal{L}_{A_f}(H_m,K_m)$ defined by $F=F_1\underset{A_1}{\ot}1_{A_f}\oplus F_2\underset{A_2}{\ot}1_{A_f}$ is a partial isometry such that $1-FF^*$ is the orthogonal projection onto $(\eta\underset{B}{\ot} 1_{A_f}).A_f$ and $1-F^*F$ is the orthogonal projection onto $(\xi_1\underset{A_1}{\ot}1_{A_f}).A_f\oplus(\xi_2\underset{A_2}{\ot}1_{A_f}).A_f$. In particular $1-F^*F, 1-FF^*\in\mathcal{K}_{A_f}(H_m,K_m)$. Moreover, it follows from lemma \ref{LemmaCompactCommutation} that $F\pi(x)-\bar\rho(x)F\in\mathcal{K}_{A_f}(H_m,K_m)$ for all $x\in A$. Hence, we get an element $\alpha=[(H_m\oplus K_m,\pi\oplus\bar\rho,F)]\in {\rm KK}(A,A_f)$.

\vspace{0.2cm}

\noindent To prove theorem \ref{TheoremKequivalence} it suffices to prove that $\alpha\underset{A_f}{\ot}[\lambda]=[\id_{A}]$ in ${\rm KK}(A,A)$ and $[\lambda]\underset{A}{\ot}\alpha=[\id_{A_f}]$ in ${\rm KK}(A_f,A_f)$. We prove the easy part in the next proposition.

\begin{proposition}\label{sub-K-equivalence}
One has $[\lambda]\underset{A}{\ot}\alpha=[\id_{A_f}]$ in ${\rm KK}(A_f,A_f)$.
\end{proposition}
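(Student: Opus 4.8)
The plan is to compute the Kasparov product $[\lambda]\underset{A}{\ot}\alpha$ explicitly at the level of cycles. By definition $[\lambda]$ is represented by the cycle $(A\underset{\lambda}{\ot}A_f\ominus 0,\lambda,0)$, i.e. the Hilbert $A_f$-module $A_f$ with left action of $A_f$ through $\lambda$ and zero Fredholm operator (degenerate in the even part), while $\alpha$ is represented by $(H_m\oplus K_m,\pi\oplus\bar\rho,F)$. Hence the product $[\lambda]\underset{A}{\ot}\alpha$ is represented by the cycle obtained by inducing $\alpha$ along $\lambda$: the module becomes $(H_m\oplus K_m)\underset{\lambda}{\ot}A_f$, which after using the identifications $H_k\underset{A_k}{\ot}A_f\underset{\lambda}{\ot}A_f\simeq H_k\underset{A_k}{\ot}A_f$ and $K\underset{B}{\ot}A_f\underset{\lambda}{\ot}A_f\simeq K\underset{B}{\ot}A_f$ is literally $H_m\oplus K_m$ again, with the left action of $A_f$ now given by $(\pi\oplus\bar\rho)\circ\lambda$ and the same operator $F$. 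So the task reduces to showing that the cycle $(H_m\oplus K_m,(\pi\circ\lambda)\oplus(\bar\rho\circ\lambda),F)\in\KK(A_f,A_f)$ equals $[\id_{A_f}]$.

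First I would dispose of the $K_m$-summand. Since $1-FF^*$ is the orthogonal projection onto $(\eta\underset{B}{\ot}1_{A_f}).A_f$, the operator $F$ restricts to a unitary from $H_m$ onto the orthocomplement of that submodule inside $K_m$; equivalently, $FF^*$ is a projection whose range is $K_m\ominus(\eta\underset{B}{\ot}1_{A_f}).A_f$. The standard degeneracy argument then shows that the cycle $(H_m\oplus K_m,(\pi\oplus\bar\rho)\circ\lambda,F)$ is operator-homotopic (by rotating $F$ to a genuine unitary onto a complemented submodule and peeling off a degenerate piece) to $(N,\sigma,0)$ where $N=(\eta\underset{B}{\ot}1_{A_f}).A_f\subset K_m$ is the one-dimensional ``vacuum'' submodule and $\sigma$ is the restriction of $\bar\rho\circ\lambda$ to $N$. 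Concretely one checks $\bar\rho(\lambda(a))(\eta\underset{B}{\ot}1_{A_f})=\eta\underset{B}{\ot}\lambda_{A_f}(a)$ for $a\in A_f$ — i.e. the vacuum vector is cyclic and the induced representation on $N$ is unitarily equivalent to the left-regular action of $A_f$ on itself; here one uses that $\rho$ is the edge-reduced representation so that $\rho(a)\eta=\eta\cdot E$-type terms vanish appropriately and only the scalar part survives on the vacuum. Thus $(N,\sigma,0)\simeq(A_f,L,0)=[\id_{A_f}]$.

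The key computational step, and the place where care is needed, is verifying that $F$ together with the two representations actually defines a Kasparov cycle over $A_f$ and that the homotopy above is legitimate — that is, that $F\,((\pi\oplus\bar\rho)\circ\lambda)(x)-((\pi\oplus\bar\rho)\circ\lambda)(x)\,F$ is $A_f$-compact for all $x\in A_f$, which is exactly Lemma~\ref{LemmaCompactCommutation}(3) pushed through the tensor product by $1_{A_f}$, and that $1-F^*F$, $1-FF^*$ lie in $\K_{A_f}(H_m,K_m)$, which was already recorded before the statement. The main obstacle I anticipate is purely bookkeeping: keeping track of which reduced words have last letter in $A_k^\circ$ versus not, so as to identify precisely the range of $F$ and hence the complement $N$, and checking that the left $A_f$-action restricted to $N$ is the regular representation rather than some twisted version. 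Once the vacuum submodule is correctly identified, the conclusion $[\lambda]\underset{A}{\ot}\alpha=[\id_{A_f}]$ is immediate, and I would phrase the argument as: the product cycle is, up to addition of a degenerate cycle and an operator homotopy contracting $F$ to its polar part, the cycle $(A_f,L_{A_f},0)$ representing the unit of $\KK(A_f,A_f)$.
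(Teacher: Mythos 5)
Your starting point is correct and matches the paper: the product $[\lambda]\underset{A}{\ot}\alpha$ is represented by $(H_m\oplus K_m,(\pi\circ\lambda)\oplus(\bar\rho\circ\lambda),F)$. But the reduction you then propose --- peel off a degenerate piece and read the class off the cokernel of $F$ --- does not work here, for two reasons. First, that ``standard degeneracy argument'' requires $F$ to intertwine the two representations \emph{exactly}; only then is the cycle the sum of a degenerate cycle and the index cycle carried by $\ker F\oplus\mathrm{coker}\,F$. Here Lemma~\ref{LemmaCompactCommutation} gives exact intertwining only for $a\in A_k$ against $F_k$; for $a\in A_{\overline k}^\circ$ the commutator is a genuinely nonzero rank-one operator, so $F$ commutes with the $A_f$-action only modulo compacts and no submodule decomposition of the cycle is available. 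Second, and relatedly, the submodules you want to restrict to are not invariant: $\bar\rho(\lambda(a))(\eta\underset{B}{\ot}1_{A_f})=\rho(a)\eta\underset{B}{\ot}1_{A_f}$, which for $a\in A_k^\circ$ lies in $K_{(k)}\underset{B}{\ot}A_f$, orthogonal to $(\eta\underset{B}{\ot}1_{A_f}).A_f$. The compression of $\bar\rho\circ\lambda$ to the vacuum submodule is left multiplication by $E_B(\lambda(a))$ --- a ucp map, not the regular representation --- so the identification $(N,\sigma,0)\simeq(A_f,L,0)$ is false. You also drop the kernel: $1-F^*F\neq 0$ (it projects onto $(\xi_1\underset{A_1}{\ot}1_{A_f}).A_f\oplus(\xi_2\underset{A_2}{\ot}1_{A_f}).A_f$), so $F$ is not a unitary from $H_m$ onto the complement of the vacuum, and any index-type count would have to involve both defect projections.

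The missing idea is the rotation trick, which is the actual content of the paper's proof. One forms the difference $[\lambda]\underset{A}{\ot}\alpha-[\id_{A_f}]$ by adjoining an extra summand $A_f$ to $K_m$, so that $F$ extends to a genuine unitary $\widetilde F$ sending $\xi_1\underset{A_1}{\ot}1_{A_f}\mapsto\eta\underset{B}{\ot}1_{A_f}$ and $\xi_2\underset{A_2}{\ot}1_{A_f}\mapsto 1_{A_f}$. One then computes that $\widetilde F^*\widetilde\rho_m(a)\widetilde F$ equals $\pi_m(a)$ for $a\in A_2$ but equals $v^*\pi_m(a)v$ for $a\in A_1$, where $v$ is the symmetry exchanging $\xi_1\underset{A_1}{\ot}1_{A_f}$ and $\xi_2\underset{A_2}{\ot}1_{A_f}$. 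The homotopy is then performed on the \emph{left representation}, not on the operator: setting $v_t=\cos(t)+iv\sin(t)$, which fixes $\pi_m(B)$, the universal property of the full amalgamated free product produces homomorphisms $\pi_t$ with $\pi_t\vert_{A_1}=v_t^*\pi_m(\cdot)v_t$ and $\pi_t\vert_{A_2}=\pi_m$, giving a path of cycles from the difference cycle at $t=0$ to a degenerate cycle at $t=\pi/2$. This is precisely where it matters that the source algebra is $A_f$ rather than $A$ (the corresponding step for $\alpha\underset{A_f}{\ot}[\lambda]$ is the hard part of the theorem, since one must check $\pi_t$ factors through $A$). Without this, or some substitute handling the failure of exact intertwining, your argument does not go through.
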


\begin{proof}
Observe that $[\lambda]\underset{A}{\ot}\alpha=[(H_m\oplus K_m,\pi_m\oplus\rho_m,F)]$ where $\pi_m=\pi\circ\lambda\,:\,A_f\rightarrow\mathcal{L}_{A_f}(H_m)$ and $\rho_m=\bar\rho\circ\lambda\,:\,A_f\rightarrow\mathcal{L}_{A_f}(K_m)$. Hence, $[\lambda]\underset{A}{\ot}\alpha-[\id_{A_f}]$ is represented by the Kasparov triple $(H_m\oplus\widetilde{K}_m,\pi_m\oplus\widetilde{\rho}_m,\widetilde{F})$, where $\widetilde{K}_m=K_m\oplus A_f$ and $\widetilde{\rho}_m(x)=\rho_m(x)\oplus x$, where we view $A_f=\mathcal{L}_{A_f}(A_f)$ by left multiplication. Finally, $\widetilde{F}\in\mathcal{L}_{A_f}(H_m,\widetilde{K}_m)$ is the unitary defined by
$$\widetilde{F}(\xi_1\underset{A_1}{\ot}1_{A_f})=\eta\underset{B}{\ot}1_{A_f},\quad\widetilde{F}(\xi_2\underset{A_2}{\ot}1_{A_f})=1_{A_f}\quad\text{and,}$$
$$\widetilde{F}(\xi)=F(\xi)\,\,\text{for all}\,\,\xi\in H_m\ominus\left((\xi_1\underset{A_1}{\ot}1_{A_f}).A_f\oplus(\xi_2\underset{A_2}{\ot}1_{A_f}).A_f\right).$$
We collect some computations in the following claim.

\vspace{0.2cm}

\noindent\textbf{Claim.}\textit{ Let $v\in\mathcal{L}_{A_f}(H_m)$ be the self-adjoint unitary defined by the identity on $H_m\ominus((\xi_1\underset{A_1}{\ot} 1_{A_f}).A_f\oplus (\xi_2\underset{A_2}{\ot} 1_{A_f}).A_f)$ and $v(\xi_1\underset{A_1}{\ot} 1_{A_f})=\xi_2\underset{A_2}{\ot} 1_{A_f}$, $v(\xi_2\underset{A_2}{\ot} 1_{A_f})=\xi_1\underset{A_1}{\ot} 1_{A_f}$. One has:
\begin{enumerate}
\item $\widetilde{F}^*\widetilde{\rho}_m(b)\widetilde{F}=\pi_m(b)$ and $v^*\pi_m(b)v=\pi_m(b)$ for all $b\in B$.
\item $\widetilde{F}^*\widetilde{\rho}_m(a)\widetilde{F}=v^*\pi_m(a)v$ for all $a\in A_1$.
\item $\widetilde{F}^*\widetilde{\rho}_m(a)\widetilde{F}=\pi_m(a)$ for all $a\in A_2$.
\end{enumerate}}
\noindent\textit{Proof of the claim.}The proof of $(1)$ is obvious and we leave it to the reader. 

\vspace{0.2cm}
\noindent$(2)$. By $(1)$, it suffices to prove $(2)$ for $a\in A_1^{\circ}$. Let $a\in A_1^{\circ}$. One the one hand:

$$\widetilde{F}^*\widetilde{\rho}_m(a)\widetilde{F}\xi_1\underset{A_1}{\ot}1_{A_f}=\widetilde{F}^*(\rho(a)\eta\underset{B}{\ot}1_{A_f})=a\xi_2\underset{A_2}{\ot}1_{A_f}\quad\text{and}\quad
\widetilde{F}^*\widetilde{\rho}_m(a)\widetilde{F}\xi_2\underset{A_2}{\ot}1_{A_f}=\widetilde{F}^*(a)=\xi_2\underset{A_2}{\ot}a.$$

\noindent One the other hand:
$$v^*\pi_m(a)v\xi_1\underset{A_1}{\ot}1_{A_f}=v^*(a\xi_2\underset{A_2}{\ot} 1_{A_f})=a\xi_2\underset{A_2}{\ot} 1_{A_f}\quad\text{and}\quad
v^*\pi_m(a)v\xi_2\underset{A_2}{\ot}1_{A_f}=v^*(a\xi_1\underset{A_1}{\ot} 1_{A_f})=\xi_2\underset{A_2}{\ot} a.$$
\noindent Let now $x=a_1\dots a_n\in A$ be reduced operator with $a_k\in A_{l_k}^{\circ}$. We prove by induction on $n$ that $\widetilde{F}^*\widetilde{\rho}_m(a)\widetilde{F}x\xi_k\underset{A_k}{\ot} 1_{A_f}=v^*\pi_m(a)vx\xi_k\underset{A_k}{\ot} 1_{A_f}$ for all $k\in\{1,2\}$. Suppose that $n=1$ so $x\in A_1^{\circ}\cup A_2^{\circ}$ and let $k\in\{1,2\}$ such that $x\notin A_k^{\circ}$ (the case $x\in A_k^{\circ}$ has been done before). We have:

$$\widetilde{F}^*\widetilde{\rho}_m(a)\widetilde{F}x\xi_k\underset{A_k}{\ot} 1_{A_f}=\widetilde{F}^*(\rho(ax)\eta\underset{B}{\ot}1_{A_f})=\left\{\begin{array}{lcl}
(ax)^{\circ}\xi_2\underset{A_2}{\ot} 1_{A_f}+\xi_1\underset{A_1}{\ot}E_B(ax)&\text{if}&x\in A_{1}^{\circ},\\
ax\xi_1\underset{A_1}{\ot} 1_{A_f}&\text{if}&x\in A_{2}^{\circ}.\end{array}\right.$$
One the other hand we have:
$$v^*\pi_m(a)vx\xi_k\underset{A_k}{\ot} 1_{A_f}=v^*(ax\xi_k\underset{A_k}{\ot}1_{A_f})=\left\{\begin{array}{lcl}
(ax)^{\circ}\xi_2\underset{A_2}{\ot} 1_{A_f}+\xi_1\underset{A_1}{\ot}E_B(ax)&\text{if}&x\in A_{1}^{\circ}\,\,(k=2),\\
ax\xi_1\underset{A_1}{\ot} 1_{A_f}&\text{if}&x\in A_{2}^{\circ}\,\,(k=1).\end{array}\right.$$

\vspace{0.2cm}

\noindent Finally, suppose that $n\geq 2$ and the formula holds for $n-1$. Write $ax=y+z$, where, if $l_1=1$, $y=(aa_1)^{\circ}a_2\dots a_n$ and $z=E_B(aa_1)a_2\dots a_n$ and, if $l_1=2$, $y=ax$ and $z=0$. Observe that, in both cases,  $y$ is a reduced operator ending with a letter from $A_{l_n}^\circ$ and $z$ is either $0$ or a reduced operator ending with a letter from $A_{l_n}^\circ$. By the induction hypothesis, we may and will assume that $k\neq l_n$. We have:
\begin{eqnarray*}
\widetilde{F}^*\widetilde{\rho}_m(a)\widetilde{F}x\xi_k\underset{A_k}{\ot} 1_{A_f}&=&\widetilde{F}^*(\rho(ax)\eta\underset{B}{\ot} 1_{A_f})=\widetilde{F}^*(\rho(y)\eta\underset{B}{\ot} 1_{A_f})+\widetilde{F}^*(\rho(z)\eta\underset{B}{\ot} 1_{A_f})\\
&=&y\xi_k\underset{A_k}{\ot} 1_{A_f}+z\xi_k\underset{A_k}{\ot} 1_{A_f}=ax\xi_k\underset{A_k}{\ot} 1_{A_f}.
\end{eqnarray*}
Moreover,
\begin{eqnarray*}
v^*\pi_m(a)v x\xi_k\underset{A_k}{\ot} 1_{A_f}&=&v^*(ax\xi_k\underset{A_k}{\ot} 1_{A_f})=v^*(y\xi_k\underset{A_k}{\ot} 1_{A_f})+v^*(z\xi_k\underset{A_k}{\ot} 1_{A_f})\\
&=&y\xi_k\underset{A_k}{\ot} 1_{A_f}+z\xi_k\underset{A_k}{\ot} 1_{A_f}=ax\xi_k\underset{A_k}{\ot} 1_{A_f}.
\end{eqnarray*}

\noindent The proof of $(3)$ is similar.\hfill{$\Box$}

\vspace{0.2cm}

\noindent\textit{End of the proof of proposition \ref{sub-K-equivalence}.} Let $t\in\R$ and define $v_t=\cos(t)+iv\sin(t)\in\mathcal{L}_{A_f}(H_m)$. Since $v=v^*$ is unitary, $v_t$ is a unitary for all $t\in\R$. Moreover, assertion $(1)$ of the Claim implies that $v_t\pi_m(b)v_t^{*}=\pi_m(b)$ for all $b\in B$. It follows from the universal property of $A_f$ that there exists a unique unital $*$-homomorphism $\pi_t\,:\,A_f\rightarrow \mathcal{L}_{A_f}(H_m)$ such that:
$$\pi_t(a)=\left\{\begin{array}{lcl}
v_t^*\pi_m(a)v_t&\text{if}&a\in A_1,\\
\pi_m(a)&\text{if}&a\in A_2.\end{array}\right.$$
Then the triple $\alpha_t=(H_m\oplus\widetilde{K}_m,\pi_t\oplus\widetilde{\rho}_m,\widetilde{F})$ gives an homotopy between $\alpha_0$ which represents $[\lambda]\underset{A}{\ot}\alpha-[\id_{A_f}]$ and $\alpha_{\frac{\pi}{2}}$ which is degenerated by the claim.
\end{proof}

\noindent We finish the proof of theorem \ref{TheoremKequivalence} in the next proposition.

\begin{proposition}\label{PropositionKequivalence}
One has $\alpha\underset{A_f}{\ot}[\lambda]=[\id_{A}]$ in ${\rm KK}(A,A)$.
\end{proposition}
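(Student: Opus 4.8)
The plan is to compute the Kasparov product $\alpha\underset{A_f}{\ot}[\lambda]$ explicitly. Since $\alpha=[(H_m\oplus K_m,\pi\oplus\bar\rho,F)]$, applying $\lambda\,:\,A_f\rightarrow A$ on the right means we must view $H_m$ and $K_m$ as Hilbert $A$-modules via $\lambda$, i.e. form $H_m\underset{\lambda}{\ot}A$ and $K_m\underset{\lambda}{\ot}A$. Because $A_f\underset{\lambda}{\ot}A\simeq A$, the module $H_k\underset{A_k}{\ot}A_f\underset{\lambda}{\ot}A$ simplifies to $H_k\underset{A_k}{\ot}A$ where now $A_f$ acts on $A$ through $\lambda$, and similarly $K_m\underset{\lambda}{\ot}A\simeq K\underset{B}{\ot}A$. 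So $\alpha\underset{A_f}{\ot}[\lambda]$ is represented by $(H_m'\oplus K_m',\pi'\oplus\bar\rho',F')$ where $H_m'=H_1\underset{A_1}{\ot}A\oplus H_2\underset{A_2}{\ot}A$, $K_m'=K\underset{B}{\ot}A$, $\pi'$ is the obvious representation of $A$ and $F'=F_1\underset{A_1}{\ot}1\oplus F_2\underset{A_2}{\ot}1$, exactly mirroring the construction of $\alpha$ but with $A_f$ everywhere replaced by $A$.

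Next I would subtract $[\id_A]=[(A,\id,0)]$. Since $1-F'^*F'$ is the projection onto $(\xi_1\underset{A_1}{\ot}1).A\oplus(\xi_2\underset{A_2}{\ot}1).A$, which is a copy of $A\oplus A$ (two complemented copies of $A$, one for each $k$), and $1-F'F'^*$ is the projection onto $(\eta\underset{B}{\ot}1).A\simeq A$, the triple $(H_m'\oplus K_m',\pi'\oplus\bar\rho',F')$ already looks almost degenerate off a finite-dimensional-over-$A$ piece. Concretely, $\alpha\underset{A_f}{\ot}[\lambda]-[\id_A]$ should be represented, after adding the degenerate piece $(A,\id,0)$ with opposite orientation, by a triple of the form $(H_m'\oplus \widetilde K_m',\pi'\oplus\widetilde\rho_m',\widetilde F')$ with $\widetilde K_m'=K_m'\oplus A$, $\widetilde\rho_m'(x)=\bar\rho'(x)\oplus\id(x)$ on the extra copy, and $\widetilde F'$ a genuine unitary sending $\xi_1\underset{A_1}{\ot}1\mapsto\eta\underset{B}{\ot}1$ and $\xi_2\underset{A_2}{\ot}1\mapsto 1_A$ (the extra summand), exactly as in Proposition~\ref{sub-K-equivalence}. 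Then the task is to show this triple is operator-homotopic to a degenerate one.

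The rotation trick is then the heart of the argument, just as in Proposition~\ref{sub-K-equivalence}: introduce the self-adjoint unitary $v\in\mathcal{L}_A(H_m')$ swapping $\xi_1\underset{A_1}{\ot}1$ and $\xi_2\underset{A_2}{\ot}1$ and fixing the orthogonal complement, set $v_t=\cos t+iv\sin t$, check that $v_t$ commutes with $\pi'(b)$ for $b\in B$ (so the twisted representations $\pi_t'$, equal to $v_t^*\pi'(\cdot)v_t$ on $A_1$ and $\pi'$ on $A_2$, are well defined by the universal property of the full free product $A_f$ mapping into $A$ — note we only ever need the \emph{full} free product's universal property here), verify the intertwining relations $\widetilde F'^*\widetilde\rho_m'(a)\widetilde F'=v^*\pi'(a)v$ for $a\in A_1$ and $=\pi'(a)$ for $a\in A_2$ by the same computation as in the claim of Proposition~\ref{sub-K-equivalence}, and conclude that $\alpha_t'=(H_m'\oplus\widetilde K_m',\pi_t'\oplus\widetilde\rho_m',\widetilde F')$ is a homotopy from the representative of $\alpha\underset{A_f}{\ot}[\lambda]-[\id_A]$ at $t=0$ to a degenerate triple at $t=\pi/2$.

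The main obstacle — and the reason this proposition is stated separately and comes after Proposition~\ref{sub-K-equivalence} — is the bookkeeping in the very first step: one must check carefully that the internal tensor product $\alpha\underset{A_f}{\ot}[\lambda]$ really does collapse to the clean ``same construction with $A$ in place of $A_f$'' picture, i.e. that the connecting operator (Kasparov's $F$-operator for the product) can be taken to be $F'$ itself, using that $F'$ is a partial isometry with $1-F'^*F'$ and $1-F'F'^*$ compact over $A$ and $F'\pi'(x)-\bar\rho'(x)F'$ compact over $A$ for all $x\in A$, which follows verbatim from Lemma~\ref{LemmaCompactCommutation} with $A$ replacing $A_f$. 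Once the product is identified, the rotation homotopy is essentially identical to the one already carried out, so I would present the first step in detail and then say ``the rest of the argument is word for word the proof of Proposition~\ref{sub-K-equivalence}.''
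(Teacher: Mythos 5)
There is a genuine gap, and it sits exactly where you wave it away. Your homotopy $\alpha_t'=(H_m'\oplus\widetilde K_m',\pi_t'\oplus\widetilde\rho_m',\widetilde F')$ must be a homotopy of Kasparov $(A,A)$-bimodules, because the class you are trying to kill, $\alpha\underset{A_f}{\ot}[\lambda]-[\id_A]$, lives in ${\rm KK}(A,A)$ with $A$ the \emph{vertex-reduced} free product acting on the left. The universal property of the full free product only produces a unital $*$-homomorphism $\pi_t\,:\,A_f\rightarrow\mathcal{L}_A(H_m')$; at $t=0$ this factors through $\lambda$ by construction, but for $t>0$ there is no a priori reason that the twisted representation ($v_t^*\pi'(\cdot)v_t$ on $A_1$, $\pi'$ on $A_2$) kills $\ker(\lambda)$. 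Your parenthetical remark that ``we only ever need the full free product's universal property here'' is precisely the point at which the argument ceases to be ``word for word'' the proof of Proposition~\ref{sub-K-equivalence}: in that proposition the left algebra is $A_f$ itself, so any representation of $A_f$ is admissible, whereas here you need $\ker(\lambda)\subset\ker(\pi_t)$ for every $t\in[0,\pi/2]$. Note also that you cannot get around this by composing with $[\lambda]$ on the left: that only shows $[\lambda]\ot(\alpha\ot[\lambda]-[\id_A])=0$, which is circular since invertibility of $[\lambda]$ is what is being proved.

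Establishing this factorization is the actual content of the proposition and occupies most of the paper's proof. The paper computes, for a reduced word $a=a_1\dots a_n$, the matrix coefficients $\langle u_t^{*}(\xi_1\underset{A_1}{\ot}1_A),\pi_t(a)u_t^{*}(\xi_1\underset{A_1}{\ot}1_A)\rangle=\sin^{2k}(t)\,a$ (and similarly with $\xi_2$), assembles these into a ucp map $\varphi_t$ on the C*-algebra generated by $\pi_t(A_f)$, proves $\varphi_t$ is GNS-faithful, and then invokes the completely positive radial multipliers of Proposition~\ref{PropMultipliers} (with $r=\sin^2(t)$) to obtain the norm estimate $\Vert\varphi_t(\pi_t(a))\Vert_A\leq\Vert\lambda(a)\Vert$ for all $a\in A_f$; combining this with GNS-faithfulness of $\varphi_t$ yields $\ker(\lambda)\subset\ker(\pi_t)$. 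None of this machinery appears in your proposal. By contrast, the first step you flag as the main obstacle --- identifying the product $\alpha\underset{A_f}{\ot}[\lambda]$ with the ``same construction over $A$'' --- is routine pushforward of modules and is dispatched in one sentence in the paper.
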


\begin{proof}
Observe that $\alpha\underset{A_f}{\ot}[\lambda]=[(H_r\oplus K_r,\pi_r\oplus\rho_r,F_r)]$ where
$$H_r=H_m\underset{\lambda}{\ot}A=H_1\underset{A_1}{\ot}A\oplus H_2\underset{A_2}{\ot}A\quad\text{and}\quad
K_r=K_m\underset{\lambda}{\ot}A=K\underset{B}{\ot}A=\left(K\underset{B}{\ot}A_k\right)\underset{A_k}{\ot} A,$$
with the canonical representations $\pi_r\,:\,A\rightarrow\mathcal{L}_{A}(H_r)$, $\pi_r(x)=\pi(x)\underset{\lambda}{\ot}1=x\underset{A_1}{\ot}1_{A}\oplus x\underset{A_2}{\ot}1_{A}$ and $\rho_r\,:\,A\rightarrow\mathcal{L}_{A}(K_r)$, $\rho_r(x)=\bar\rho(x)\underset{\lambda}{\ot}1=\rho(x)\underset{B}{\ot}1_{A}$ and with the operator $F_r=F\underset{\lambda}{\ot} 1\in\mathcal{L}_A(H_r,K_r)$. Hence, $\alpha\underset{A_f}{\ot}[\lambda]-[\id_{A}]$ is represented by the Kasparov triple $(H_r\oplus\widetilde{K}_r,\pi_r\oplus\widetilde{\rho}_r,\widetilde{F}_r)$, where $\widetilde{K}_r=K_r\oplus A$ and $\widetilde{\rho}_r(x)=\rho_r(x)\oplus x$, where we view $A=\mathcal{L}_{A}(A)$ by left multiplication. Finally, $\widetilde{F}_r\in\mathcal{L}_{A}(H_r,\widetilde{K}_r)$ is the unitary defined by
$$\widetilde{F}_r(\xi_1\underset{A_1}{\ot}1_{A})=\eta\underset{B}{\ot}1_{A},\quad\widetilde{F}_r(\xi_2\underset{A_2}{\ot}1_{A})=1_{A}\quad\text{and,}$$
$$\widetilde{F}(\xi)=F(\xi)\,\,\text{for all}\,\,\xi\in H_r\ominus\left((\xi_1\underset{A_1}{\ot}1_{A}).A\oplus(\xi_2\underset{A_2}{\ot}1_{A}).A\right).$$
The claim in the proof of proposition \ref{sub-K-equivalence} implies the following claim.

\vspace{0.2cm}

\noindent\textbf{Claim.}\textit{ Let $u\in\mathcal{L}_{A}(H_r)$ be the self-adjoint unitary defined by the identity on $H_r\ominus((\xi_1\underset{A_1}{\ot} 1_{A}).A\oplus (\xi_2\underset{A_2}{\ot} 1_{A}).A)$ and $u(\xi_1\underset{A_1}{\ot} 1_{A})=\xi_2\underset{A_2}{\ot} 1_{A}$, $u(\xi_2\underset{A_2}{\ot} 1_{A})=\xi_1\underset{A_1}{\ot} 1_{A}$. One has:
\begin{enumerate}
\item $\widetilde{F}^*\widetilde{\rho}_r(b)\widetilde{F}=\pi_r(b)$ and $u^*\pi_r(b)u=\pi_r(b)$ for all $b\in B$.
\item $\widetilde{F}^*\widetilde{\rho}_r(a)\widetilde{F}=u^*\pi_r(a)u$ for all $a\in A_1$.
\item $\widetilde{F}^*\widetilde{\rho}_r(a)\widetilde{F}=\pi_r(a)$ for all $a\in A_2$.
\end{enumerate}}
\noindent Let $t\in\R$ and define the unitary $u_t=\cos(t)+iu\sin(t)\in\mathcal{L}_{A}(H_r)$. Assertion $(1)$ of the Claim implies that $u_t^*\pi_r(b)u_t=\pi_r(b)$ for all $b\in B$. By the universal property of full amalgamated free products, for all $t\in\R$, there exists a unique unital $*$-homomorphism $\pi_t\,:\,A_f\rightarrow \mathcal{L}_{A}(H_r)$ such that:
$$\pi_t(a)=\left\{\begin{array}{lcl}
u_t^*\pi_r(a)u_t&\text{if}&a\in A_1,\\
\pi_r(a)&\text{if}&a\in A_2.\end{array}\right.$$
Arguing as in the end of the proof of proposition \ref{sub-K-equivalence}, we see that it suffices to show that, for all $t\in[0,\frac{\pi}{2}]$, $\pi_t$ factorizes through $A$ i.e. $\ker(\lambda)\subset\ker(\pi_t)$. To do that, we need the following claim.

\vspace{0.2cm}

\noindent\textbf{Claim.} \textit{For all $t\in\R$ and all $a=a_1\dots a_n\in \mathcal{A}$ a reduced operator with $a_k\in A_{l_k}^{\circ}$ one has
\begin{enumerate}
\item $\pi_t(a)u_t^*(\xi_2\underset{A_2}{\ot}1_A)=e^{-it}(a\xi_2\underset{A_2}{\ot}1_A)$ if $l_n=1$ and $\pi_t(a)(\xi_1\underset{A_1}{\ot}1_A)=a\xi_1\underset{A_1}{\ot}1_A$ if $l_n=2$.
\item $\langle u_t^{*}(\xi_1\underset{A_1}{\ot} 1_A),\pi_t(a) u_t^{*}(\xi_1\underset{A_1}{\ot} 1_A)\rangle=\sin^{2k}(t)a$ where $k=\left\{\begin{array}{ll}
\frac{n}{2}&\text{if }n\text{ is even},\\
\frac{n-1}{2}&\text{if }n\text{ is odd and }l_n=1,\\
\frac{n+1}{2}&\text{if }n\text{ is odd and }l_n=2.\end{array}\right.$
\item $\langle \xi_2\underset{A_2}{\ot} 1_A,\pi_t(a) \xi_2\underset{A_2}{\ot} 1_A\rangle=\sin^{2k}(t)a$ where $k=\left\{\begin{array}{ll}
\frac{n}{2}&\text{if }n\text{ is even},\\
\frac{n+1}{2}&\text{if }n\text{ is odd and }l_n=1,\\
\frac{n-1}{2}&\text{if }n\text{ is odd and }l_n=2.\end{array}\right.$
\end{enumerate}}

\noindent\textit{Proof of the claim.}
$(1)$ is obvious by induction on $n$ once observed that $u_t\xi=e^{it}\xi$ (and $u_t^*\xi=e^{-it}\xi$) for all $\xi\in H_r\ominus(\xi_1\underset{A_1}{\ot}1_A.A\oplus\xi_2\underset{A_2}{\ot}1_A.A)$.

\noindent $(2)$. Define, for $a_1\dots a_n\in\mathcal{A}$, $F(a_1,\dots,a_n)=\langle u_t^{*}(\xi_1\underset{A_1}{\ot} 1_A),\pi_t(a) u_t^{*}(\xi_1\underset{A_1}{\ot} 1_A)\rangle$. First suppose that $a\in A_1^{\circ}$ then $F(a)=\langle u_t^{*}(\xi_1\underset{A_1}{\ot} 1_A),u_t^*\pi_r(a)(\xi_1\underset{A_1}{\ot} 1_A)\rangle=\langle \xi_1\underset{A_1}{\ot} 1_A,\xi_1\underset{A_1}{\ot} a\rangle=a$. Now, let $a=a_1\dots a_n\in\mathcal{A}$ with $n\geq 2$ and $l_n=1$. We have:
$$F(a_1,\dots,a_n)
=\langle u_t^{*}(\xi_1\underset{A_1}{\ot} 1_A),\pi_t(a_1\dots a_{n-1})u_t^*(\xi_1\underset{A_1}{\ot} a_n)\rangle=F(a_1,\dots,a_{n-1})a_n.$$
Hence, it suffices to show the formula for $l_n=2$. Suppose $a\in A_2^{\circ}$, we have:
\begin{eqnarray*}
F(a)&=&\langle u_t^{*}(\xi_1\underset{A_1}{\ot} 1_A),\pi_r(a)u_t^*(\xi_1\underset{A_1}{\ot} 1_A)\rangle\\
&=&\langle \cos(t)\xi_1\underset{A_1}{\ot} 1_A-i\sin(t)\xi_2\underset{A_2}{\ot} 1_A,\cos(t)a\xi_1\underset{A_1}{\ot} 1_A-i\sin(t)\xi_2\underset{A_2}{\ot} a\rangle
=\sin^2(t)a.
\end{eqnarray*}
Now suppose $a_1a_2\in\mathcal{A}$, with $l_2=2$, $l_1=1$. We have:
\begin{eqnarray*}
F(a_1,a_2)&=&\langle \xi_1\underset{A_1}{\ot} 1_A,\pi_r(a_1)u_t\pi_r(a_2)u_t^*(\xi_1\underset{A_1}{\ot} 1_A)\rangle\\
&=&\langle \xi_1\underset{A_1}{\ot} 1_A,\pi_r(a_1)u_t(\cos(t)a_2\xi_1\underset{A_1}{\ot} 1_A-i\sin(t)\xi_2\underset{A_2}{\ot} a_2)\rangle\\
&=&\langle \xi_1\underset{A_1}{\ot} 1_A,\cos(t)e^{it}a_1a_2\xi_1\underset{A_1}{\ot} 1_A-i\cos(t)\sin(t) a_1\xi_2\underset{A_2}{\ot} a_2+\sin^2(t)\xi_1\underset{A_1}{\ot} a_1a_2\rangle\\
&=&\sin^2(t)a_1a_2.
\end{eqnarray*}
Finally, suppose that $n\geq 3$ and $a_1\dots a_n\in\mathcal{A}$ with $l_n=2$. Define $x=a_1\dots a_{n-2}$. We have
\begin{eqnarray*}
F(a_1,\dots,a_n)&=&\langle u_t^{*}(\xi_1\underset{A_1}{\ot} 1_A),\pi_t(x)u_t^*\pi_r(a_{n-1})u_t\pi_r(a_n) u_t^{*}(\xi_1\underset{A_1}{\ot} 1_A)\rangle\\
&=&\langle u_t^{*}(\xi_1\underset{A_1}{\ot} 1_A),\pi_t(x)u_t^*\pi_r(a_{n-1})u_t(\cos(t)a_n\xi_1\underset{A_1}{\ot} 1_A-i\sin(t)\xi_2\underset{A_2}{\ot} a_n)\rangle
\end{eqnarray*}
$$
=\langle u_t^{*}(\xi_1\underset{A_1}{\ot} 1_A),\pi_t(x)u_t^*(\cos(t)e^{it}a_{n-1}a_n\xi_1\underset{A_1}{\ot} 1_A-i\cos(t)\sin(t) a_{n-1}\xi_2\underset{A_2}{\ot} a_n+\sin^2(t)\xi_1\underset{A_1}{\ot} a_{n-1}a_n)\rangle
$$
$$
=\langle u_t^{*}(\xi_1\underset{A_1}{\ot} 1_A),\cos(t)a_1\dots a_n\xi_1\underset{A_1}{\ot} 1_A-ie^{-it}\cos(t)\sin(t)a_1\dots a_{n-1}\xi_2\underset{A_2}{\ot} a_n\rangle$$
$$
+\langle u_t^{*}(\xi_1\underset{A_1}{\ot} 1_A),\sin^2(t)\pi_t(x)u_t^*\xi_1\underset{A_1}{\ot} a_{n-1}a_n)\rangle.
$$
Hence we find:
$$F(a_1,\dots,a_n)=\sin^2(t)\langle u_t^{*}(\xi_1\underset{A_1}{\ot} 1_A),\pi_t(x)u_t^*\xi_1\underset{A_1}{\ot} a_{n-1}a_n)\rangle=\sin^2(t)F(a_1,\dots,a_{n-2})a_{n-1}a_n.$$
The result now follows by an obvious induction on $n$. The proof of $(3)$ is similar.\hfill{$\Box$}
\vspace{0.2cm}

\noindent\textit{End of the proof of proposition \ref{PropositionKequivalence}.} Fix $t\in[0,\frac{\pi}{2}]$ and let $A_t$ be the C*-subalgebra of $\mathcal{L}_A(H_r)$ generated by $\pi_t(A_1)\cup\pi_t(A_2)$. Hence, $\pi_t\,:\,A_f\rightarrow A_t$ is surjective. Consider the ucp map $\varphi_t\,:\,A_t\rightarrow A$ defined by $\varphi_t(x)=\frac{1}{2}\left(\langle u_t^*(\xi_1\underset{A_1}{\ot}1_A),xu_t^*(\xi_1\underset{A_1}{\ot}1_A)\rangle+\langle \xi_2\underset{A_2}{\ot}1_A,x\xi_2\underset{A_2}{\ot}1_A\rangle\right)$ and note that $\varphi_t$ is GNS faithful. Indeed, let $x\in A_t$ such that $\varphi_t(y^*x^*xy)=0$ for all $y\in A_t$. Then $L\subset\ker(x)$ where,
\begin{eqnarray*}
L&=&\overline{\text{Span}}\left(A_tu_t^*(\xi_1\underset{A_1}{\ot} 1_A).A\cup A_t(\xi_2\underset{A_2}{\ot} 1_A).A\right)=\overline{\text{Span}}\left(A_t(\xi_1\underset{A_1}{\ot} 1_A).A\cup A_t(\xi_2\underset{A_2}{\ot} 1_A).A \right)\\
&=&\overline{\text{Span}}\left(A_t(\xi_1\underset{A_1}{\ot} 1_A).A\cup A_tu_t^*(\xi_2\underset{A_2}{\ot} 1_A).A \right)=H_r,
\end{eqnarray*}
where we used assertion $(3)$ of the claim for the last equality. Hence $x=0$. Let $A_{v,k}$ for $k=1,2$ be the $k$-vertex-reduced free product and call $i_k$ the natural inclusion of $A$ in $A_{v,k}$ and $\pi_k=i_k\circ\lambda$ the natural map from $A_f$ to $A_{v,k}$. Clearly $||x||_A=\max (||i_1(x)||,||,i_2(x)||)$ for any $x$ in the vertex-reduced free product $A$. From the assertions $(1)$ and $(2)$ of the claim and proposition \ref{PropMultipliers} with $r=\sin^2(t)$ we deduced that for any  $k=1,2$ there exists two ucp maps $\psi_1^k$ and $\psi_2^k$ from $A_{v,k}$ to
itself such that $i_k(\varphi_t(\pi_t(a)))=\frac{1}{2} (\psi_1^k(\pi_k(a))+\psi_2^k(\pi_k(a)))$ for all $a\in A_f$. Therefore $||\varphi_t(\pi_t(a))||_A \leq  \max (||\pi_1(a)||, ||\pi_2(a)||) =|| \lambda(a) ||$ for all $a\in A_f$. Let us show that $\ker(\lambda)\subset\ker(\pi_t)$. Let $x\in\ker(\lambda)$. Then, for all $y\in A_f$ we have $\lambda(y^*x^*xy)=0$. Therefore  $\varphi_t\circ\pi_t(y^*x^*xy)=0$ for all $y\in A_f$. Since $\pi_t$ is surjective we deduced that $\varphi_t(y^*\pi_t(x)^*\pi_t(x)y)=0$ for all $y\in A_t$. Using that $\varphi_t$ is GNS faithful we deduce that $\pi_t(x)=0$.
\end{proof}
\noindent We obtain the following obvious corollary of theorem \ref{TheoremKequivalence} and corollary \ref{CorDegVertexRed}.
\begin{corollary}[\cite{Cu82}] If we have conditional expectations $E_k\,:\, A_k\rightarrow B$ which are also unital $*$-homomorphism, then 
the canonical surjection $A_1\underset{B}{*} A_2\rightarrow A_1\oplus_B A_2$ is $K$-invertible 
\end{corollary}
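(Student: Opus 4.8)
The plan is to deduce the statement directly from Theorem~\ref{TheoremKequivalence} and Corollary~\ref{CorDegVertexRed}, with no further computation. Write $A_v=A_1\overset{v}{\underset{B}{*}}A_2$ and let $\lambda\,:\,A_f\to A_v$ be the canonical surjection (this is the map called $\lambda\,:\,A_f\to A$ in Section~3). Theorem~\ref{TheoremKequivalence} tells us that $[\lambda]\in{\rm KK}(A_f,A_v)$ is invertible. Since $E_1$ and $E_2$ are unital $*$-homomorphisms, Corollary~\ref{CorDegVertexRed} provides a canonical $*$-isomorphism $\nu\,:\,A_v\overset{\simeq}{\to}A_1\oplus_B A_2$. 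The first step is just to record these two facts.

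The second step is to check that the canonical surjection $q\,:\,A_1\underset{B}{*}A_2\to A_1\oplus_B A_2$ equals $\nu\circ\lambda$. Recall from the proof of Corollary~\ref{CorDegVertexRed} that $\nu$ is produced by the universal property of the vertex-reduced free product (Proposition~\ref{PropVertexReduced}) applied to the faithful unital $*$-homomorphisms $\nu_1(a)=(a,E_1(a))$ and $\nu_2(a)=(E_2(a),a)$, so that $\nu\circ\lambda|_{A_k}=\nu_k$ for $k=1,2$. On the other hand, $q$ is by definition the unique unital $*$-homomorphism with $q|_{A_k}=\nu_k$: it exists by the universal property of $A_f$ (since $\nu_1|_B=\nu_2|_B$), it is onto because the identity $(a_1,a_2)=\nu_1(a_1-E_1(a_1))+\nu_2(a_2-E_2(a_2))+\nu_1(E_1(a_1))$ shows that $A_1\oplus_B A_2$ is generated by $\nu_1(A_1)\cup\nu_2(A_2)$, and it is unique since $A_f$ is the closed linear span of $B$ and the reduced operators. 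Hence $q=\nu\circ\lambda$.

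The final step is purely formal: in ${\rm KK}(A_1\underset{B}{*}A_2,\,A_1\oplus_B A_2)$ one has $[q]=[\lambda]\underset{A_v}{\ot}[\nu]$, and both factors are invertible --- $[\nu]$ because $\nu$ is a $*$-isomorphism, and $[\lambda]$ by Theorem~\ref{TheoremKequivalence} --- so the Kasparov product $[q]$ is invertible, i.e.\ $q$ is $K$-invertible. There is no genuine obstacle here; the only thing needing a moment's attention is the compatibility bookkeeping of the second step, namely that the isomorphism of Corollary~\ref{CorDegVertexRed} respects the canonical copies of $A_1$ and $A_2$ inside both algebras, which is immediate from the explicit formulas for $\nu_1$ and $\nu_2$.
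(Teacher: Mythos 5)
Your proposal is correct and is exactly the argument the paper intends: the paper states this corollary as an ``obvious'' consequence of Theorem~\ref{TheoremKequivalence} and Corollary~\ref{CorDegVertexRed}, and your factorization $q=\nu\circ\lambda$ with $[\lambda]$ $KK$-invertible and $\nu$ a $*$-isomorphism is precisely the intended (and only) content. The bookkeeping you carry out in the second step is accurate.
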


\section{A long exact sequence in $KK$-theory for full amalgamated free products}

\noindent Let $A_1$ and $A_2$ two unital C*-algebras with a common unital C*-subalgebra $B$.  We will denote by $i_l$ the inclusion of $B$ in $A_l$ for $l=1,2$. The algebra
$A_f$ is the full amalgamated free product. To simplify notation we will denote by $S$ the algebra $C_0(]-1,1[)$.
\vspace{0.2cm}

\noindent Let $D$ be the subalgebra of $S\otimes A_f$ consisting of functions $f$ such that $f(]-1,0[)\subset A_1$, $f(]0,1[)\subset A_2$ and $f(0)\in B$. This algebra is of course
isomorphic to the cone of $i_1\oplus i_2$ from $B$ to $A_1\oplus A_2$.
We call   $j$ the inclusion of $D$ in the suspension of $A_f$.

\begin{theorem}\label{bigthm}
Suppose that there exist unital conditional expectations from $A_l$ to $B$ for $l=1,2$, then the map $j$, seen as an element $[j]$ of $KK^0(D, S\otimes A_f)$, is invertible.
\end{theorem}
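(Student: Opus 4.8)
The first thing to notice is what is really at stake. Since $D$ is (as observed) the mapping cone of $\iota:=i_1\oplus i_2\,:\,B\to A_1\oplus A_2$, there is a short exact sequence
$$0\longrightarrow SA_1\oplus SA_2\longrightarrow D\xrightarrow{\ \mathrm{ev}_0\ }B\longrightarrow 0,$$
and it is semisplit (a ucp splitting of $\mathrm{ev}_0$ is $b\mapsto\chi\ot b$ for a fixed $\chi\in S$ with $0\le\chi\le 1$, $\chi(0)=1$). Consequently, once $[j]$ is known to be invertible, inserting the resulting identification $KK^*(\,\cdot\,,D)\cong KK^*(\,\cdot\,,S\ot A_f)=KK^{*+1}(\,\cdot\,,A_f)$ (and its dual in the first variable) into the six-term sequence of this extension yields exactly the two exact sequences stated in the introduction, with boundary map $\iota$. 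So the entire content of the theorem is the invertibility of $[j]$, and this is where the amalgamated free product structure must be used. (By Theorem \ref{TheoremKequivalence}, $S\lambda\,:\,S\ot A_f\to S\ot A_v$ is a $KK$-equivalence and $S\lambda\circ j$ is the analogous inclusion of $D$ into $S\ot A_v$, so one could equally replace $A_f$ by the vertex-reduced product $A_v$; I will argue directly with $A_f$, equipped with its canonical conditional expectation $E\,:\,A_f\to B$ factoring through $A_e$, with GNS triple $(K,\rho,\eta)$.)

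The plan, generalising \cite{Ge97}, is to write down an explicit inverse
$$\beta\in KK^0(S\ot A_f,\,D)\ \bigl(=KK^1(A_f,D)\bigr)$$
and to check that $[j]\underset{S\ot A_f}{\ot}\beta=[\mathrm{id}_D]$ and $\beta\underset{D}{\ot}[j]=[\mathrm{id}_{S\ot A_f}]$, following the scheme of the proof of Theorem \ref{TheoremKequivalence}. The cycle for $\beta$ is obtained by ``spreading'' the Hilbert module $K\underset{B}{\ot}A_f$ over the interval $]-1,1[$: over $]-1,0]$ one uses the $A_1$-module $K\underset{i_1}{\ot}A_1$ together with the partial isometry $F_1$ built from the GNS data of $E_1$ (the analogue of the operator appearing in Lemma \ref{LemmaCompactCommutation}), over $[0,1[$ the $A_2$-module $K\underset{i_2}{\ot}A_2$ and the corresponding $F_2$, and at $t=0$ these two halves are glued over $B$ through $K$ itself; the representation of $A_f$ is $\rho\ot 1$, and $F$ is a self-adjoint interpolation between (phases of) $F_1$ and $F_2$ chosen so that $1-F^2$ and the commutators $[F,\rho(a)\ot1]$ are $D$-compact. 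Here the finite-rank nature of the defects $1-F_k^*F_k$ and $1-F_kF_k^*$ and the degeneration imposed at $t=0$ are precisely what force the cycle to be valued in $D$ and not merely in $S\ot A_f$. The class $[j]$ itself is represented, in this picture, by the evident cycle coming from the inclusion $D\hookrightarrow S\ot A_f$.

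The two Kasparov products are then computed by rotation homotopies of the kind used in Section 3: one introduces the self-adjoint unitary exchanging the two ``vacuum'' vectors, the family $v_t=\cos t+iv\sin t$, uses the universal property of the \emph{full} amalgamated free product to obtain a path of representations $\pi_t$ that fix $B$, and shows that at the two ends of the path one recovers the identity cycle and a degenerate one. The identity $\beta\underset{D}{\ot}[j]=[\mathrm{id}_{S\ot A_f}]$ is the easy half, carried out essentially inside the interval direction as in Proposition \ref{sub-K-equivalence}. The identity $[j]\underset{S\ot A_f}{\ot}\beta=[\mathrm{id}_D]$ is the hard one: as in Proposition \ref{PropositionKequivalence}, one must verify that the interpolating representations genuinely factor through $A_f$ and that the accompanying ucp ``radial'' cut-downs remain contractive, which is where Proposition \ref{PropMultipliers} (the completely positive radial multipliers $\varphi_r$) and the GNS-faithfulness of the canonical conditional expectations onto $A_1$ and $A_2$ enter; this requires a reduced-word computation parallel to the claim in the proof of Proposition \ref{PropositionKequivalence}, now carrying the extra parameter $t\in\,]-1,1[$.

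I expect the main obstacle to be the bookkeeping in this last step: arranging the single operator $F$ compatibly over the whole interval — in particular at the gluing point $t=0$ over $B$ — so that all deficiencies $1-F^2$ and all commutators $[F,\rho(a)\ot 1]$ are \emph{simultaneously} $D$-compact, and then running the reduced-word induction through the rotation homotopy to see that the family stays within Kasparov cycles. Once these are in place the remaining verifications (that the boundary map of the induced six-term sequence is $\iota$, that the endpoints are respectively the identity and a degenerate cycle) are formal, as in Section 3.
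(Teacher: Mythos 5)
Your construction of the candidate inverse matches the paper's: the class $x\in KK^1(A_f,D)$ is realized on the module $(H_1\ot C_0(]-1,0[))\ot_{\kappa_1}D\oplus(H_2\ot C_0(]0,1[))\ot_{\kappa_2}D\oplus(K\ot S)\ot_{\kappa_0}D$, with an operator interpolating between the partial isometries $F_1,F_2$ of Section 3 through cutoff functions, and the identity $\beta\underset{D}{\ot}[j]=[\mathrm{id}_{S\ot A_f}]$ is indeed the easy half, proved by reducing to the cycle of Proposition \ref{sub-K-equivalence} (the paper does this with a reparametrization homotopy and the Connes--Skandalis connection criterion rather than a rotation, but that is a minor point).

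The genuine gap is the other composition. You propose to prove $[j]\underset{S\ot A_f}{\ot}\beta=[\mathrm{id}_D]$ ``as in Proposition \ref{PropositionKequivalence}'', by a rotation homotopy of representations fixing $B$ combined with the radial multipliers of Proposition \ref{PropMultipliers} and a reduced-word induction carrying the interval parameter, and you yourself flag this as the main unresolved obstacle. As described this step does not go through: a representation $\pi_t$ obtained by conjugating the $A_1$-part by a rotation unitary would have to act by $D$-linear operators on a module whose fibre over $s\in\,]-1,1[$ changes type (an $A_1$-module for $s<0$, a $B$-module at $s=0$, an $A_2$-module for $s>0$), so the universal property of $A_f$ cannot be invoked fibrewise to produce such a path inside $\mathcal{L}_D$, and the GNS-faithful cut-down $\varphi_t$ of Proposition \ref{PropositionKequivalence} has no evident analogue relative to $D$; Proposition \ref{PropMultipliers} plays no role in Section 4 of the paper. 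The paper circumvents all of this by a purely $KK$-algebraic argument (Pimsner's trick): it computes only the two restricted products $[\bar\kappa_l]\ot_D[j]\underset{S\ot A_f}{\ot}y=[\bar\kappa_l]$ (via Lemma \ref{lemma-vertex}) and $[j]\underset{S\ot A_f}{\ot}y\ot_D[ev_0]=[ev_0]$ (via Lemmas \ref{lemmaev0} and \ref{lemma-edge}); then $q=[\mathrm{Id}_D]-[j]\underset{S\ot A_f}{\ot}y$ is an idempotent of the ring $KK^0(D,D)$ killed by $[ev_0]$, hence by the six-term sequence of $0\to SA_1\oplus SA_2\to D\to B\to 0$ it factors through $[\bar\kappa_1]\oplus[\bar\kappa_2]$, and therefore $q=q\ot_D q=0$. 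Without this (or a worked-out substitute for your direct homotopy), the hard half of the proof is missing.
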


\noindent The proof of this result will be done in several steps. We will start with the construction of an element $x$ of $KK^1(A_f, D)$. As $KK^1(A_f,D)$ is isomorphic to $KK^0( 
S\otimes A_f, D)$ this will produce a candidate  $y$ for the inverse of $j$. The proof that $y\otimes_D [j]$ is the identity of the suspension of $A_f$ will use \ref{sub-K-equivalence}.
Finally the proof that $[j]\otimes_{S\otimes A_f}y$ is the identity of $D$ will be done indirectly by using a short exact sequence for $D$.

\subsection{An inverse in KK-theory}

In order to present the inverse, we need some additional notations and preliminaries. Let $\kappa_1$ be the inclusion of $C_0(]-1,0[;A_1)$ in $D$ and $\kappa_2$ the inclusion of $C_0(]0,1[;A_2)$ in $D$.  There is also $\kappa_0$ the obvious map from $S\otimes B$ in $D$.
As $K$ of the preceding section is a $B$-module, we can define
$$K_0=(K\otimes S)\otimes_{\kappa_0} D,\,\,K_1=(K\otimes_{i_1}A_1\otimes C_0(]-1,0[))\otimes_{\kappa_1} D\text{ and }K_2=(K\otimes_{i_2}A_2\otimes C_0(]0,1[))\otimes_{\kappa_2} D.$$
\noindent If one defines $I_l$ as the images of $\kappa_l$ in $D$ for $l=1,2$, it is clear that these are ideals in $D$.

\begin{lemma}\label{lemmaK}
$K_l$ is isomorphic to $\overline{K_0.I_l}$ for $l=1,2$ as $D$  Hilbert module.
\end{lemma}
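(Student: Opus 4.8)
The plan is to identify $\overline{K_0.I_l}$ first as a Hilbert $I_l$-module and then to observe that this upgrades automatically to an isomorphism of Hilbert $D$-modules. I treat $l=1$; the case $l=2$ is identical with $]-1,0[$, $\kappa_1$, $I_1$, $A_1$, $i_1$ replaced by $]0,1[$, $\kappa_2$, $I_2$, $A_2$, $i_2$.

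First I would reduce $\overline{K_0.I_1}$ to a balanced tensor product. Since $I_1$ is an ideal of $D$ one has $(\zeta\otimes_{\kappa_0}d)\cdot i=\zeta\otimes_{\kappa_0}(di)$ with $di\in I_1$ for all $\zeta\in K\otimes S$, $d\in D$, $i\in I_1$, and every element of $I_1$ is a product of two elements of $I_1$; hence $\overline{K_0.I_1}=\overline{\text{Span}}\{\zeta\otimes_{\kappa_0}i:\zeta\in K\otimes S,\ i\in I_1\}$, a closed submodule of $K_0$ on which the $D$-valued inner product takes values in $I_1$, so it is naturally a Hilbert $I_1$-module. Let $r\colon S=C_0(]-1,1[)\to C_0(]-1,0[)$ be the restriction and $\sigma=i_1\otimes r\colon S\otimes B\to A_1\otimes C_0(]-1,0[)=I_1$ (identifying $S\otimes B$ with $B\otimes S$). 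Because $I_1$ consists of functions supported in $]-1,0[$ we have $\kappa_0(g)\cdot i=\sigma(g)\cdot i$ inside $I_1$ for all $g\in S\otimes B$, $i\in I_1$; therefore the map which is the identity on elementary tensors respects the balancing, preserves inner products and has dense range, hence extends to a unitary $(K\otimes S)\otimes_\sigma I_1\cong\overline{K_0.I_1}$ of Hilbert $I_1$-modules.

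Next I would compute $(K\otimes S)\otimes_\sigma I_1$ using the compatibility of internal with external tensor products: since $\sigma=i_1\otimes r$ and $I_1=A_1\otimes C_0(]-1,0[)$,
$$(K\otimes S)\otimes_\sigma I_1\ \cong\ (K\otimes_{i_1}A_1)\otimes\bigl(S\otimes_r C_0(]-1,0[)\bigr)\ \cong\ (K\otimes_{i_1}A_1)\otimes C_0(]-1,0[),$$
the second isomorphism holding because $r$ is surjective, whence $S\otimes_r C_0(]-1,0[)\cong C_0(]-1,0[)$ via $f\otimes g\mapsto r(f)g$. (Concretely, writing $E_1:=(K\otimes_{i_1}A_1)\otimes C_0(]-1,0[)=C_0(]-1,0[;K\otimes_{i_1}A_1)$, the map $(\zeta\otimes f)\otimes_\sigma i\mapsto\bigl(t\mapsto\zeta\otimes_{i_1}f(t)i(t)\bigr)$ is a well-defined, inner-product preserving unitary onto $E_1$, both sides producing the section $t\mapsto\overline{f(t)}f'(t)\,i(t)^*\langle\zeta,\zeta'\rangle_B\,i'(t)$, surjectivity following by factoring $h=h_1h_2$ in $C_0(]-1,0[)$ and taking $f=h_1$, $i=h_2a$ to hit $t\mapsto\zeta\otimes_{i_1}h(t)a$.) Thus $\overline{K_0.I_1}\cong E_1$ as Hilbert $I_1$-modules, where $E_1=K\otimes_{i_1}A_1\otimes C_0(]-1,0[)$ is exactly the module whose pushforward along $\kappa_1$ defines $K_1$.

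Finally, since $E_1$ is non-degenerate over $I_1$, the assignment $\zeta\otimes_{\kappa_1}i\mapsto\zeta\cdot i$ ($i\in I_1$) extends to a unitary $K_1=E_1\otimes_{\kappa_1}D\cong E_1$ of Hilbert $I_1$-modules, so $\overline{K_0.I_1}\cong K_1$ as Hilbert $I_1$-modules. To promote this to an isomorphism of Hilbert $D$-modules I would use that any Hilbert $D$-module $F$ with $\langle F,F\rangle\subseteq I_1$ satisfies $F=\overline{F.I_1}$ and $\xi\cdot d=\lim_\lambda\xi\cdot(u_\lambda d)$ for any approximate unit $(u_\lambda)$ of $I_1$; hence the $D$-action on $F$ is determined by the restricted $I_1$-action, and any isometric isomorphism of Hilbert $I_1$-modules between two such modules is automatically $D$-linear. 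Applying this with $F=\overline{K_0.I_1}$ and $F'=K_1$ (both have inner products in $I_1$) finishes the proof. The one step that needs genuine care is this last upgrade — that on a Hilbert $D$-module whose inner products lie in an ideal the $D$-action is pinned down by the action of the ideal; everything else is routine bookkeeping with internal tensor products.
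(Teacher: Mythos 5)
Your proof is correct and follows essentially the same route as the paper's: both identify $\overline{K_0.I_1}$ with $(K\otimes_{i_1}A_1)\otimes C_0(]-1,0[)$ tensored over $D$ by localizing the $S$-factor to $]-1,0[$ and rebalancing over $i_1$, the paper carrying the $\otimes_{\kappa_l}D$ factors throughout and invoking associativity of interior tensor products where you instead work over the ideal $I_1$ and restore the $D$-structure at the end via the (correct) observation that the $D$-action on a Hilbert $D$-module whose inner products lie in $I_1$ is determined by the $I_1$-action. The only imprecision is that restriction $r\colon C_0(]-1,1[)\to C_0(]-1,0[)$ does not exist as stated, since restricted functions need not vanish at $0$; read $\sigma=i_1\otimes r$ as a nondegenerate $*$-homomorphism into the multiplier algebra $M(I_1)$, which changes nothing because all you use is $\sigma(g)i\in I_1$ and the density $\overline{r(S)\,C_0(]-1,0[)}=C_0(]-1,0[)$, already established by your factorization $h=h_1h_2$ (with $h_1$ extended by zero to an element of $S$).
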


\begin{proof}
Lets do if for $l=1$. Indeed as $I_1=\overline{C_0(]-1,0[).I_1}$ because an approximate unit for $C_0(]-1,0[)$ is also one for $I_1$, it is easy to see that $\overline{K_0.I_1}$ is  isomorphic to $\overline{(K\otimes S).C_0(]-1,0[) }\otimes_{\kappa_0} D .I_1$, i.e. $(K\otimes C_0(]-1,0[)) \otimes_{\kappa_0} D .I_1$. Considering that $C_0(]-1,0[;A_1)\otimes_{\kappa_1} D$ is $D.I_1$, one gets that $\overline{K_0.I_1}$ is nothing but $(K\otimes C_0(]-1,0[)) \otimes_{\tilde\kappa_0} C_0(]-1,0[;A_1)\otimes_{\kappa_1} D$ where $\tilde\kappa_0$ is the natural inclusion of $C_0(]-1,0[;B)$ in $C_0(]-1,0[;A_1)$, i.e. $i_1\otimes Id_{C_0(]-1,0[)}.$ Therefore
 $(K\otimes_{i_1}A_1)\otimes C_0(]-1,0[)$ is  $(K\otimes C_0(]-1,0[)) \otimes_{\tilde\kappa_0} C_0(]-1,0[;A_1)$ and $\overline{K_0.I_1}$ is $K_1$.
\end{proof}

\noindent We will also need the following lemmas

\begin{lemma}\label{basicprop}

\begin{enumerate}
\item If $f\in C([-1,1];\R)$, then $f$ is a self-adjoint element in $Z(M(D))$ and more generally for any  $D$-Hilbert module $\mathcal{E}$  then the right multiplication by $f$ induces 
a morphism $\hat f \in Z(\mathcal{L}_D(\mathcal{E}))$ such that the map $f\mapsto \hat f$ is a algebra morphism.
\item Let $f$ in $C_0(]-1,0[;\R)$. Then $f\in I_1\cap Z(D) $ and the right multiplication by $f$ induces a morphism $\hat f$ of $\mathcal{L}_D(K_0,K_1)$ such that $\hat f^* \hat f=\hat{f^2}$ in $\mathcal{L}_D(\K_0)$ and
$\hat f \hat f^*=\hat{f^2}$ in $\mathcal{L}_D(\K_1)$
\item Let $f$ in $C_0(]0,1[;\R)$.  Then $f\in I_2\cap Z(D) $ and the right multiplication by $f$ induces a morphism $\hat f$ of $\mathcal{L}_D(K_0,K_2)$ such that $\hat f^* \hat f=\hat{f^2}$ in $\mathcal{L}_D(\K_0)$ and $\hat f \hat f^*=\hat{f^2}$ in $\mathcal{L}_D(\K_2)$
\end{enumerate}
\end{lemma}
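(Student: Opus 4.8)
The plan is to obtain part~(1) as an instance of the general behaviour of central multipliers on Hilbert modules, and then to reduce parts~(2) and~(3) to part~(1) together with Lemma~\ref{lemmaK}.

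For (1), first note that a function $f\in C([-1,1];\R)$ acts on $D$ by pointwise multiplication, $(fg)(t)=f(t)g(t)$; since $f(t)$ is a scalar, $A_1,A_2,B$ are linear subspaces of $A_f$, and $fg$ vanishes at $\pm1$, this indeed maps $D$ into $D$, so $f$ is a multiplier of $D$. It is self-adjoint because $f$ is real-valued and central because scalars are central in $A_f$, so $f\in Z(M(D))$ and $f\mapsto\hat f$ is an algebra morphism $C([-1,1];\R)\to Z(M(D))$. Given a Hilbert $D$-module $\mathcal{E}$, I would use that the right action of $D$ on $\mathcal{E}$ is non-degenerate (Cohen factorisation: $\mathcal{E}=\overline{\mathcal{E} D}$) to extend it to a right action of $M(D)$, and then check that for central $m\in M(D)$ the operator $\hat m\colon\xi\mapsto\xi m$ is adjointable with $\hat m^*=\widehat{m^*}$ — using $\langle\xi m,\eta\rangle=m^*\langle\xi,\eta\rangle=\langle\xi,\eta\rangle m^*=\langle\xi,\eta m^*\rangle$, which is where centrality enters — and that it commutes with every $T\in\mathcal{L}_D(\mathcal{E})$, since writing $\xi=\eta d$ gives $T(\xi m)=T(\eta(dm))=(T\eta)(dm)=(T\xi)m$. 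Hence $\hat m\in Z(\mathcal{L}_D(\mathcal{E}))$ and $m\mapsto\hat m$ is multiplicative.

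For (2) — and (3) is the same with the obvious changes — I would first extend $f\in C_0(]-1,0[;\R)$ by $0$ on $[0,1[$; the result is a continuous, scalar-valued function on $]-1,1[$ which vanishes at $\pm1$, lies in $A_1$ on $]-1,0[$, and vanishes on $[0,1[$, hence belongs to $D$, in fact to the ideal $I_1$ (it is supported in $]-1,0[$ with values in $A_1$), and it is central since its values are scalars; so $f\in I_1\cap Z(D)$. By Lemma~\ref{lemmaK} we may identify $K_1$ with $\overline{K_0\cdot I_1}$, a Hilbert $D$-submodule of $K_0$ with the restricted inner product. Right multiplication by $f\in I_1$ then sends $K_0$ into $\overline{K_0\cdot I_1}=K_1$, so it defines $\hat f\in\mathcal{L}_D(K_0,K_1)$; its adjoint is right multiplication by $f^*=f$ regarded as a map from $K_1\subset K_0$ back into $K_0$, because for $\zeta\in K_0$ and $\eta\in K_1$ one computes, inside $K_0$, $\langle\zeta f,\eta\rangle=f\langle\zeta,\eta\rangle=\langle\zeta,\eta\rangle f=\langle\zeta,\eta f\rangle$. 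Therefore $\hat f^*\hat f$ is the operator $\zeta\mapsto\zeta f^2$ on $K_0$, that is $\widehat{f^2}\in\mathcal{L}_D(K_0)$, while $\hat f\hat f^*$ is the operator $\eta\mapsto\eta f^2$ on $K_1$, that is $\widehat{f^2}\in\mathcal{L}_D(K_1)$; here $f^2\in C_0(]-1,0[;\R)\subset D$, so both are instances of the construction of (1).

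There is no deep point in this lemma; the only step requiring care will be the bookkeeping in (2)--(3), namely to make sure that ``right multiplication by $f$'' denotes literally the same operation whether an element is viewed in $K_0$ or its image is viewed in the submodule $K_1\subset K_0$, so that the two stated identities $\hat f^*\hat f=\widehat{f^2}$ and $\hat f\hat f^*=\widehat{f^2}$ are simply the single computation $\xi\mapsto\xi f^2$ read in the two modules. This compatibility is exactly what the identification in Lemma~\ref{lemmaK} supplies.
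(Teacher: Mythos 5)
Your proposal is correct and follows exactly the route the paper intends: part (1) is the standard fact that a central multiplier of $D$ acts as a central adjointable operator on any Hilbert $D$-module, and parts (2)--(3) reduce to the identification $K_l\simeq\overline{K_0\cdot I_l}$ of Lemma \ref{lemmaK}, which is precisely what the paper invokes when it declares (1) obvious and (2)--(3) clear in view of that lemma. Your write-up simply supplies the details (Cohen factorisation for the $M(D)$-action, the adjoint computation, and the bookkeeping identifying ``right multiplication by $f$'' on $K_0$ and on $K_1\subset K_0$) that the paper omits.
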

\noindent The first point is pretty obvious and $(2)$ and $(3)$ are also clear  in view of lemma \ref{lemmaK}.

\begin{lemma}\label{compactprop}
\begin{enumerate}
\item If $f\in C_0(]-1,1[;\R)$ then for any $B$-module $\mathcal{E}$ and $F\in \mathcal{K}_B(\mathcal{E})$, we have $(F\otimes 1_S)\otimes_{\kappa_0} 1_D \hat f$ is  a compact operator of 
$(\mathcal{E}\otimes S)\otimes_{\kappa_0} D$
\item If $f\in C_0(]-1,0[;\R)$ then for any $A_1$-module  $\mathcal{E}$ and $F\in \mathcal{K}_{A_1}(\mathcal{E})$, we have $F\otimes 1_{C_0(]-1,0[;\R)}\otimes_{\kappa_1} 1_D \hat f$ is a compact operator of
 $(\mathcal{E}\otimes C_0(]-1,0[))\otimes_{\kappa_1} D$
 \item Similarily for $f\in C_0(]0,1[;\R)$ and $A_2$-modules.
 \end{enumerate}
 \end{lemma}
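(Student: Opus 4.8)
The plan is to reduce all three statements to one elementary fact about interior tensor products: if $C$ is a C*-algebra, $\mathcal{G}$ a Hilbert $C$-module, $\phi\colon C\to M(D)=\mathcal{L}_D(D)$ a $*$-homomorphism whose range is contained in $D=\mathcal{K}_D(D)$, and $T\in\mathcal{K}_C(\mathcal{G})$, then the induced operator $T\otimes_\phi 1_D$ on $\mathcal{G}\otimes_\phi D$ is $D$-compact. I would record this first: by continuity and the density of finite rank operators it is enough to treat $T=\theta_{x,y}$ with $x,y\in\mathcal{G}$, and then $\theta_{x,y}\otimes_\phi 1_D=R_xR_y^{*}$, where $R_x\in\mathcal{L}_D(D,\mathcal{G}\otimes_\phi D)$ is $R_x(d)=x\otimes_\phi d$; since $R_x^{*}R_x$ is left multiplication by $\phi(\langle x,x\rangle)\in D=\mathcal{K}_D(D)$, the operator $R_x$ is $D$-compact, hence so is $R_xR_y^{*}$. (This is standard; I would simply cite Lance's book on Hilbert C*-modules.)

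Granting this, the argument for $(1)$ runs as follows. Put $S=C_0(]-1,1[)$ and recall that $\mathcal{K}_{S\otimes B}(\mathcal{E}\otimes S)=\mathcal{K}_B(\mathcal{E})\otimes S$, and that by Lemma~\ref{basicprop}(1) the operator $\hat f$ on $(\mathcal{E}\otimes S)\otimes_{\kappa_0}D$ is right multiplication by $\kappa_0(f\otimes 1_B)\in D$. Writing $M_f\in\mathcal{L}_S(S)$ for multiplication by $f$, the crucial observation is the identity
$$\bigl((F\otimes 1_S)\otimes_{\kappa_0}1_D\bigr)\,\hat f=\bigl((F\otimes 1_S)(\mathrm{id}_{\mathcal{E}}\otimes M_f)\bigr)\otimes_{\kappa_0}1_D=(F\otimes M_f)\otimes_{\kappa_0}1_D ,$$
which holds because in $(\mathcal{E}\otimes S)\otimes_{\kappa_0}D$ one may slide $\kappa_0(f\otimes 1_B)$ through the balanced tensor product, so that right multiplication by $\kappa_0(f\otimes 1_B)$ becomes multiplication of the $S$-coordinate by $f$. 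The gain is that, although $1_S\notin\mathcal{K}_S(S)$, one does have $M_f\in S=\mathcal{K}_S(S)$, hence $F\otimes M_f\in\mathcal{K}_B(\mathcal{E})\otimes S=\mathcal{K}_{S\otimes B}(\mathcal{E}\otimes S)$; since moreover $\kappa_0(S\otimes B)\subset D=\mathcal{K}_D(D)$, the elementary fact above (with $C=S\otimes B$, $\phi=\kappa_0$) shows that $(F\otimes M_f)\otimes_{\kappa_0}1_D$ is $D$-compact, which is exactly the assertion.

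For $(2)$ and $(3)$ the argument is the same word for word, with $S\otimes B$ replaced by $C_0(]-1,0[;A_1)$ (respectively $C_0(]0,1[;A_2)$) and $\kappa_0$ by $\kappa_1$ (respectively $\kappa_2$): Lemma~\ref{lemmaK} identifies the ambient module, the range of $\kappa_1$ (respectively $\kappa_2$) is the ideal $I_1\subset D=\mathcal{K}_D(D)$ (respectively $I_2$), $\hat f$ is right multiplication by $\kappa_1(f\otimes 1_{A_1})$ (respectively $\kappa_2(f\otimes 1_{A_2})$) by Lemma~\ref{basicprop}(2)--(3), and $M_f$ lies in $C_0(]-1,0[)=\mathcal{K}_{C_0(]-1,0[)}(C_0(]-1,0[))$ (respectively $C_0(]0,1[)$). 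The one point that deserves care — and the place I expect the only (mild) friction — is the sliding identity displayed above; it is precisely where one uses that $f$ vanishes at $\pm 1$, so that $f\otimes 1_B$ genuinely lies in $S\otimes B$ and $\kappa_0(f\otimes 1_B)$ is an honest element of $D$, not merely of $M(D)$. All the rest is routine bookkeeping together with the continuity/closedness argument already invoked.
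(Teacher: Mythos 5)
Your proof is correct, but it is organized around a different key lemma than the paper's. The paper argues entirely by hand: it reduces to a rank-one operator $F=\theta_{\xi,\eta}$ and factors $f=f_1f_2^4$ with $f_1,f_2\in C_0$, so that after absorbing one copy of $f_2$ into the $S$-slot and one into the $D$-slot of each vector, the whole expression becomes an honest rank-one operator $\theta_{\xi\otimes f_2\otimes f_2,\,\eta\otimes f_2\otimes f_2}$ on the $D$-module composed with the bounded central $\hat f_1$. You instead isolate the standard fact (Lance) that $T\otimes_\phi 1_D$ is $D$-compact whenever $T$ is compact and $\phi$ takes values in $D=\mathcal{K}_D(D)$, and reduce to it by sliding $f$ once across the balanced tensor product, replacing $F\otimes 1_S$ by the genuinely compact $F\otimes M_f\in\mathcal{K}_B(\mathcal{E})\otimes S$. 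The underlying mechanism is the same in both arguments --- the function $f$, because it vanishes at $\pm 1$, can be moved through $\otimes_{\kappa_0}$ (resp.\ $\otimes_{\kappa_l}$) to kill the non-compact factor --- but your route avoids the ad hoc factorization $f=f_1f_2^4$ and makes visible exactly where the hypothesis $f\in C_0$ enters, at the cost of importing an external lemma where the paper stays self-contained. Your sliding identity and the identifications $\mathcal{K}_{S\otimes B}(\mathcal{E}\otimes S)=\mathcal{K}_B(\mathcal{E})\otimes S$, $\kappa_0(S\otimes B)\subset D$ are all valid, and the deduction that $R_x$ is compact from compactness of $R_x^*R_x$ is standard, so there is no gap.
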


\begin{proof}
Point $(2)$ and $(3)$ are similar to $(1)$. To do $(1)$, let $F$ be the rank one operator $\theta_{\xi,\eta}$ for $\xi$ and $\eta$ vectors in $\mathcal{E}$ which is defined as
$\theta_{\xi,\eta}(x)= \xi<\eta,x>$ for all $x$ in $\mathcal{E}$. Then $(F\otimes 1_S)\otimes_{\kappa_0} 1_D \hat f$ is $\theta_{\xi\otimes f_2\otimes f_2,\eta\otimes f_2\otimes f_2} \hat f_1$ and therefore compact for any function $f=f_1f_2^4$ with $f_1$ and $f_2$ in $C_0(]-1,1[;\R)$.  As any function can be written like that, use for example the polar decomposition, we get our result.
\end{proof}
\noindent Define now two functions in $C([-1,1];\R)$ : $C^+(t)$ is $\cos(\pi t)$ if $t\geq 0$ and  $1$ if $t\leq 0$, the function $C^-(t)$ is  $\cos(\pi t)$ if $t\leq 0$ and $1$ if $t \geq 0$.
Similarly, we have two functions in $S$ ; $S^+$ is $\sin(\pi t)$ if $t\geq 0$ and  $0$ if $t\leq 0$, the function $S^-(t)$ is  $\sin(\pi t)$ if $t\leq 0$ and $0$ if $t \geq 0$.
And finally $T$ is the identity function of $C([-1,1];\R)$.

\noindent With the notation of the first part, we have a natural $D$-module 
$$H=(H_1\otimes C_0(]-1,0[))\otimes_{\kappa_1} D\oplus (H_2\otimes C_0(]0,1[))\otimes_{\kappa_2} D\oplus (K\otimes S)\otimes_{\kappa_0} D.$$
It is also clear that H is endowed with a natural (left) action of $A_f$ as $H_1, H_2$ and $K$ have it.

\vspace{0.2cm}

\noindent Let $G$ be the operator of $\mathcal{L}_D(H)$ defined in matrix form by 

$$G=\begin{pmatrix} \widehat{C^-} & 0 & - ((F_1\otimes 1_{C_0(]-1,0[)})^*\otimes_{\kappa_1} 1 )\widehat{S^-} \\
0 & -\widehat C^+ & ((F_2\otimes 1_{C_0(]0,1[)})^*\otimes_{\kappa_2} 1 )\widehat{S^+}\\
-  \widehat{S^-}^*((F_1\otimes 1_{C_0(]-1,0[)})\otimes_{\kappa_1} 1)  & \widehat{S^+}^* ((F_2\otimes 1_{C_0(]0,1[)})\otimes_{\kappa_2} 1) & Z\\
\end{pmatrix}$$
where $Z= -\widehat{C^-} (q_1\otimes 1_S)\otimes_{\kappa_0} 1+\widehat{ C^+}(q_2\otimes 1_S)\otimes_{\kappa_0} 1-\widehat{T} (q_0\otimes 1_S)\otimes_{\kappa_0} 1$. Thanks to  lemma \ref{basicprop}, $G$ is well-defined. Moreover the following holds.

\begin{proposition}
The operator $G$ verifies $G^2-1$ is a compact operator of $H$ and $G$ commutes modulo compact operators with the action of $A_f$.
\end{proposition}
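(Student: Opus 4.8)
The plan is to check the two assertions ($G^2-1$ compact and $[G,\pi(a)]$ compact for $a\in A_f$) essentially by direct matrix computation, using the structural properties of $F_1,F_2$ from Lemma \ref{LemmaCompactCommutation}, the centrality and functional-calculus relations of Lemma \ref{basicprop}, and the compactness principle of Lemma \ref{compactprop}. First I would compute $G^2$ block by block. The key algebraic inputs are: the scalar identities $(C^-)^2+(S^-)^2=1$ on $]-1,0[$ and $(C^+)^2+(S^+)^2=1$ on $]0,1[$, together with $(C^-)^2=1$, $S^-=0$ on $[0,1]$ (and symmetrically for $C^+,S^+$), so that $\widehat{(C^-)^2}+\widehat{(S^-)^2}$ acts as the projection onto the ``$]-1,0[$-part'' and similarly on the other side; and the partial-isometry relations $F_k^*F_k=1-(\text{proj onto }\xi_k.A_k)$, $F_kF_k^*=q_k\otimes 1$, $q_1+q_2+q_0=1$, all projections commuting, recalled just before Lemma \ref{LemmaCompactCommutation}. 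Plugging these in, the diagonal $(1,1)$ and $(2,2)$ entries of $G^2$ become $\widehat{(C^-)^2}+\widehat{(S^-)^2}(F_1^*F_1\otimes 1)$ and $\widehat{(C^+)^2}+\widehat{(S^+)^2}(F_2^*F_2\otimes 1)$; since $F_k^*F_k=1$ up to the rank-one projection onto $\xi_k.A_k$, and $\widehat{(S^\pm)^2}$ times that rank-one projection is compact by Lemma \ref{compactprop}, each of these equals $1$ modulo compacts. The $(3,3)$ entry expands, using $Z^2$ and the two off-diagonal contributions $\widehat{(S^-)^2}(F_1F_1^*\otimes 1)$ and $\widehat{(S^+)^2}(F_2F_2^*\otimes 1)$, to $\widehat{T^2}(q_0\otimes 1)+\widehat{(C^-)^2}(q_1\otimes 1)+\widehat{(C^+)^2}(q_2\otimes 1)+\widehat{(S^-)^2}(q_1\otimes 1)+\widehat{(S^+)^2}(q_2\otimes 1)$, which is $(q_0+q_1+q_2)\otimes 1=1$ up to the term $\widehat{T^2-1}(q_0\otimes 1)$; but $T^2-1\in C_0(]-1,1[)$, so by Lemma \ref{compactprop}(1) this is compact. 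Finally the off-diagonal $(1,3)$, $(2,3)$, $(1,2)$ entries of $G^2$: the $(1,2)$ block is a sum of two terms each containing a factor $\widehat{S^-}\widehat{S^+}=\widehat{S^-S^+}=0$ (disjoint supports) or $\widehat{C^-}\cdot\widehat{S^+}\cdot(\dots)$ which vanishes since $C^-S^+=0$ as functions and these commute with everything by Lemma \ref{basicprop}(1); similarly the $(1,3)$ and $(2,3)$ blocks telescope to $(\widehat{C^-}\widehat{S^-}-\widehat{S^-}\widehat{C^-})(\dots)=0$ plus compact corrections coming from replacing $F_k^*F_k$ or $q_k$ by $1$. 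I would write this out carefully but expect it to be routine bookkeeping.

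For the commutation with $A_f$, recall that $A_f$ acts on each of $H_1,H_2,K$ and hence diagonally on $H$ (through the three legs $\rho$ on $K$, the identity on $H_k$ via $A_k\subset A_f$ — more precisely through $\lambda$ and the representations fixed in Section 3). The scalar blocks $\widehat{C^\pm},\widehat{S^\pm},\widehat T, q_0\otimes1$ lie in $Z(M(D))$-like positions and commute with the left $A_f$-action exactly (the left action is by $D$-linear, indeed $S\otimes A_f$-module, operators commuting with right multiplication by $C([-1,1])$-functions), so the only possible failure of commutation comes from the entries containing $F_1,F_2,q_1,q_2$. For those one invokes Lemma \ref{LemmaCompactCommutation}: $\rho(a)F_k-F_ka\in\mathcal K$ for all $a\in A$, hence $\rho(a)F_k-F_k a$ (tensored up and multiplied by the bounded scalar $\widehat{S^\pm}$) is compact by Lemma \ref{compactprop}(2),(3) after writing $S^\pm=g\cdot h^4$ as in the proof of \ref{compactprop}; similarly $[\rho(a),q_k]$ is compact since $q_k\otimes 1=F_kF_k^*\otimes 1$ and $[\rho(a),F_kF_k^*]\in\mathcal K$, multiplied by $\widehat{C^\pm}$ or $\widehat{S^\pm}$, stays compact. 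Assembling the blocks, $G\pi(a)-\pi(a)G\in\mathcal K_D(H)$ for all $a$ in a generating set, hence for all $a\in A_f$.

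The main obstacle — really the only place demanding care rather than bookkeeping — is handling the corner where the supports of the cutoff functions meet $0$: the isometry defects of $F_k$ (the rank-one projections onto $\xi_k.A_k$) and the discrepancy $T^2-1$ near $\pm1$ must all be absorbed into compacts via Lemma \ref{compactprop}, and one must be sure that the scalar functions $S^\pm$, $T^2-1$, $(C^\pm)^2-1$ all genuinely lie in the open-interval ideals $C_0(]-1,0[)$, $C_0(]0,1[)$, or $C_0(]-1,1[)$ so that the hypotheses of \ref{compactprop} apply — this is where the precise choice $C^\pm(t)=1$ off the relevant half-interval and $C^\pm(t)=\cos(\pi t)$, vanishing of $\sin(\pi t)$ at the interval endpoint, are used. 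Once that is checked the verification is mechanical.
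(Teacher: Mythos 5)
Your treatment of $G^2-1$ is essentially the paper's computation: the same partial-isometry relations $F_kF_k^*=q_k\otimes 1$, $F_k^*F_k=1-(\text{projection onto }\xi_k.A_k)$, the identities $(C^\pm)^2+(S^\pm)^2=1$ and $q_0+q_1+q_2=1$, and Lemma \ref{compactprop} applied to $(S^\pm)^2$ and $T^2-1$. (Your aside that $\widehat{(C^-)^2}+\widehat{(S^-)^2}$ is ``the projection onto the $]-1,0[$-part'' is a slip --- that sum is identically $1$ --- but you use it correctly afterwards.) The compact commutation of the off-diagonal entries of $G$, which carry the factor $\widehat{S^\pm}$ with $S^\pm\in C_0$ of an open half-interval, is also handled as in the paper.

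The genuine gap is in the commutator of $Z$ with the action of $A_f$. You argue term by term: ``$[\rho(a),q_k]$ is compact \dots multiplied by $\widehat{C^\pm}$ or $\widehat{S^\pm}$, stays compact.'' This is exactly the step Lemma \ref{compactprop} does \emph{not} cover, because the coefficient functions of $Z$ are $C^-$, $C^+$ and $T$, none of which lies in $C_0(]-1,1[)$: they take the values $\pm 1$ at the endpoints. A field of operators on $(K\otimes S)\otimes_{\kappa_0}D$ that is constant in $t$ and compact in each fibre is not a compact operator of the $D$-module (compare $\mathcal{K}_{C_0(]-1,1[)}(C_0(]-1,1[))\neq\mathcal{L}_{C_0(]-1,1[)}(C_0(]-1,1[))$); one needs the field to vanish at $\pm 1$, which is precisely the hypothesis $f\in C_0(]-1,1[)$ in Lemma \ref{compactprop}. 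So $\widehat{C^-}\,[q_1\otimes 1_S,\rho(a)\otimes 1_S]\otimes_{\kappa_0}1$ and $\widehat{T}\,[q_0\otimes 1_S,\rho(a)\otimes 1_S]\otimes_{\kappa_0}1$ are not individually compact, and your argument breaks down. The correct route, which the paper takes, exploits a cancellation between the three terms of $Z$: for $a\in A_1$ one has the \emph{exact} commutation $[q_1,\rho(a)]=0$ (from $\rho(a)F_1=F_1a$), and substituting $q_2=1-q_1-q_0$ gives $[Z,\rho(a)\otimes_{\kappa_0}1]=-\widehat{C^++T}\,[q_0,\rho(a)]\otimes_{\kappa_0}1$; the function $C^++T$ does vanish at $\pm1$ and $q_0$ is compact, so Lemma \ref{compactprop} now applies. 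The case $a\in A_2$ is symmetric. Without this rearrangement the compact-commutation claim for $Z$ is unjustified.
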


\begin{proof}
Computing $G^2$ one gets as upper left $2\times 2$  corner :
$$\begin{pmatrix}
\widehat{C^-}^2+F_1^*\otimes_{\kappa_1} 1 \widehat{S^-}  \widehat{S^-}^*F_1\otimes_{\kappa_1} 1 & F_1^*\otimes_{\kappa_1} 1 \widehat{S^-} \widehat{S^+}^* F_2\otimes_{\kappa_2} 1 \\
F_1^*\otimes_{\kappa_1} 1 \widehat{S^-} \widehat{S^+}^* F_2\otimes_{\kappa_2} 1  & \widehat{C^+}^2+F_2^*\otimes_{\kappa_1} 1 \widehat{S^+}  \widehat{S^+}^*F_2\otimes_{\kappa_1} 1\\
\end{pmatrix}$$
As $F_1^*F_1$ is the identity modulo compact operator, using lemma \ref{compactprop} ( the function $(S^-)^2$ is in $C_0(]-1,1[)$ ) one has that $F_1^*\otimes_{\kappa_1} 1 \widehat{(S^-)^2} F_1\otimes_{\kappa_1} 1$ is $\widehat{(S^-)^2}$ modulo compact operators. Recalling also that $F_1^*F_2=0$, one gets that  this matrix is the identity modulo compact operators.

\noindent Let's focus now on the last row of $G^2$. We get first $-\widehat{C^-} F_1^*\otimes_{\kappa_1} 1 \widehat{S^-} - F_1^*\otimes_{\kappa_1} 1 \widehat{S^-} Z$.  As $F_1^*q_1\otimes_{i_1} 1=F_1^*$ and $F_1^*q_2\otimes_{i_1} 1=0$ along with $F_1^*q_0\otimes_{i_1} 1=0$, $F_1^*\otimes_{\kappa_1} 1 \widehat{S^-} Z$ is $-F_1^*\otimes_{\kappa_1} 1 \widehat{S^-}\widehat{C^-}$.
The second composant of that row is treated in the same way. 
Finally the last composant is $\widehat{S^-}^2(F_1F_1^*)\otimes_{\kappa_1} 1+\widehat{S^+}^2(F_2F_2^*)\otimes_{\kappa_1} 1+ \widehat{C^-}^2 (q_1\otimes 1_S)\otimes_{\kappa_0} 1+\widehat{ C^+}^2(q_2\otimes 1_S)\otimes_{\kappa_0} 1+\widehat{T}^2 (q_0\otimes 1_S)\otimes_{\kappa_0} 1$ as $q_0,q_1,q_2$ are commuting projections.
But $F_lF_l^*$ is $q_l\otimes_{i_l} 1$  so $\widehat{S^-}^2(F_1F_1^*)\otimes_{\kappa_1} 1$ is $\widehat{S^-}^2 (q_1\otimes 1_S)\otimes_{\kappa_0} 1$.
Hence, as $q_1+q_2+q_0=1$, the last component is $1+ \widehat{T^2-1} (q_0\otimes 1_S)\otimes_{\kappa_0} 1$. As $T^2-1$ is in $C_0(]-1,1[)$ and $q_0$ is compact, this composant is then $1$ modulo compact
operator.

\noindent Addressing now the compact commutation with the left action of $A_f$, it is very obvious using \ref{compactprop} for every composant of $G$ except $Z$ as it
 contains multiplication with functions not in $C_0(]-1,1[)$. 
 So let $a$ be in $A_1$. We need to compute $[Z,\rho(a)\otimes_{\kappa_0} 1]$. But we know that $[q_1,\rho(a)]=0$. As $q_2=1-q_1-q_0$  we get that $[Z,\rho(a)\otimes_{\kappa_0} 1]=
 -(\widehat{C^+ + T})[q_0,\rho(a)]\otimes_{\kappa_0}1$ which is compact as $C^+ + T$ is a function that vanishes on $-1$ and $1$. 
 The case when $a$ is in $A_2$ is treated in a similar way, hence the compact commutation property is proved for all $a$ in $A_f$.
 \end{proof}
 
 \noindent As a consequence, the couple $(H, G)$ defines an element of $KK^1(A_f, D)$ which we will call $x$ in the sequel.

 \subsection{K-equivalence}
In all the following proofs we will very often use the external tensor product of Kasparov elements. Instead of the traditional notation $\tau_C(x)$
 for the tensorisation with the algebra $C$ of an element $x$ in $KK^*(A,B)$,  we will write $1_C\otimes x$ for the element in $KK^*(C\otimes A, C\otimes B)$ or $x\otimes 1_C$
 for the element in $KK^*(A\otimes C, B\otimes C)$.  Of course $B\otimes C $ is (non canonically) isomorphic to $C\otimes B$, but as we will perform several times this operation, the order will matter.  Note that we do not specify the tensor norm as the algebra $C$ we will be using is alway nuclear. Also when $\pi$ is a morphism between $A$ and $B$, we will write $[\pi]$ for the canonical element in $KK^0(A,B)$. We will denote by $b$ the element of $KK^1(\mathbb{C}, S)$ which is defined on the $S$ Hilbert module $S$ itself by the operator $\widehat{T}$. It is well known that $b$ is invertible.  

\begin{proposition}\label{subequiv}
 With the hypothesis of \ref{bigthm}, one has in $KK^1(A_f, A_f\otimes S)$ that $x\otimes_D [j]$ is homotopic to $ ( 1_{A_f}\otimes b)\otimes_{A_f\otimes S} ([Id_{A_f}]\otimes 1_S)$
\end{proposition}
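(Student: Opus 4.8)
The plan is to compute the Kasparov product $x \otimes_D [j]$ explicitly at the level of cycles, identify it with a cycle on the $S\otimes A_f$ module $(K\otimes S)\otimes_B A_f$, and then compare it with the standard cycle representing $(1_{A_f}\otimes b)\otimes_{A_f\otimes S}([\mathrm{Id}_{A_f}]\otimes 1_S)$ via an explicit homotopy built by rotating coordinates, exactly as in the proof of Proposition \ref{sub-K-equivalence}. First I would unwind the definitions: $x=[(H,G)]\in KK^1(A_f,D)$ and $[j]\in KK^0(D,S\otimes A_f)$ is the inclusion of $D$ into $S\otimes A_f$, so $x\otimes_D[j]$ is represented by the module $H\otimes_j(S\otimes A_f)$ with the operator $G\otimes_j 1$. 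Since $j$ is injective, the three summands of $H$ become, after tensoring down to $S\otimes A_f$: $(H_1\otimes C_0(]-1,0[))\otimes_{A_1}(S\otimes A_f)$, $(H_2\otimes C_0(]0,1[))\otimes_{A_2}(S\otimes A_f)$, and $(K\otimes S)\otimes_B(S\otimes A_f)$, with the obvious identifications of the ideals $I_l$ inside $S\otimes A_f$. The key simplification is that the partial isometries $F_1,F_2$ that enter $G$, once tensored up to $A_f$, become the operator $F$ of Section $3$ with $1-F^*F$ and $1-FF^*$ compact; this is precisely the data behind $\alpha\in KK(A,A_f)$ and Proposition \ref{sub-K-equivalence}.

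Next I would set up the comparison target. The element $1_{A_f}\otimes b\in KK^1(A_f, A_f\otimes S)$ is represented on $A_f\otimes S$ (as an $A_f\otimes S$-module) by the operator $\widehat T$ (multiplication by the coordinate function $T$ on $]-1,1[$), with left action of $A_f$ by multiplication; composing with $[\mathrm{Id}_{A_f}]\otimes 1_S$ just re-reads this as a cycle in $KK^1(A_f, A_f\otimes S)$. So the statement to prove is that the cycle $(H\otimes_j(S\otimes A_f),\, G\otimes_j 1)$ is homotopic to $(A_f\otimes S,\, \widehat T)$ with $A_f$ acting by left multiplication. I would do this in two moves. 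The first move is to use the compact partial isometries $F_1,F_2$ (tensored to $A_f$) to ``absorb'' the $H_1$- and $H_2$-summands against parts of the $(K\otimes S)$-summand: concretely, the rotation homotopy $t\mapsto$ (matrix built from $\cos$, $\sin$ of $F_l$) degenerates the off-diagonal-plus-$q_l$ pieces of $G$, leaving a cycle supported on $(\eta\otimes S)\otimes_B(S\otimes A_f)\cong A_f\otimes S$, with residual operator $\widehat T$ restricted there. This is where one uses that $1-FF^*$ is the projection onto $(\eta\otimes 1_{A_f})\cdot A_f$ and that on the complement $G$ is a compact perturbation of a self-adjoint unitary, so that the complement contributes a degenerate cycle. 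The function-theoretic bookkeeping — that $\widehat{C^\pm}$, $\widehat{S^\pm}$ restricted appropriately glue up to the coordinate function $T$ on $]-1,1[$ — is the content of the identities $(C^-)^2+(S^-)^2=1$ on $]-1,0[$, etc., already exploited in the proof that $G^2-1$ is compact.

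The second, and I expect main, obstacle is checking that the left $A_f$-action survives this reduction as honest left multiplication by $A_f$ on $A_f\otimes S$, rather than some twisted action that needs a further homotopy. This is exactly the phenomenon seen in Proposition \ref{sub-K-equivalence}, where $\widetilde F^*\widetilde\rho_m(a)\widetilde F$ equals $v^*\pi_m(a)v$ for $a\in A_1$ and $\pi_m(a)$ for $a\in A_2$, and one needs the unitary rotation $v_t=\cos t+iv\sin t$ to undo the discrepancy on $A_1$. I would import that computation verbatim (it only involves the modules over $A_1,A_2,B$ and the operator $F$, which are the same building blocks here, now tensored by the commutative algebras $C_0(]-1,0[)$, $C_0(]0,1[)$, $S$), and combine the rotation $v_t$ on the $A_f$-coordinates with the $F_l$-rotation above into a single two-parameter homotopy; since $S$ is nuclear and the tensor factors are central, the two rotations commute and can be performed simultaneously or consecutively. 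Concatenating the $F$-absorption homotopy with the $v_t$-homotopy of Proposition \ref{sub-K-equivalence} then yields a homotopy from $x\otimes_D[j]$ to the degenerate-plus-$(A_f\otimes S,\widehat T)$ cycle, which is exactly $(1_{A_f}\otimes b)\otimes_{A_f\otimes S}([\mathrm{Id}_{A_f}]\otimes 1_S)$. The only real care needed is to verify that $G\otimes_j 1$ is a genuine Kasparov product representative for $x\otimes_D[j]$ (i.e.\ the connection and positivity conditions), but since $[j]$ is represented by an isometry-like cycle with trivial operator, the product is just the pushforward and no delicate connection argument is required.
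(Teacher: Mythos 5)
Your first step is right and agrees with the paper: since $[j]$ is a $*$-homomorphism, $x\otimes_D[j]$ is simply the pushforward cycle $\left(H\otimes_j(S\otimes A_f),\,G\otimes_j 1\right)$, whose three summands are supported over $]-1,0[$, $]0,1[$ and $]-1,1[$ respectively. From there, however, your argument has two genuine gaps. First, you never address the mismatch between the half-intervals and the full interval: the pushforward module is \emph{not} of the form $M\otimes S$, so it cannot yet be compared with $(1_{A_f}\otimes b)\otimes_{A_f\otimes S}([\mathrm{Id}_{A_f}]\otimes 1_S)$, which lives on a module of that form. The paper devotes a lemma to exactly this point, constructing a homotopy over the triangles $\Delta_1=\{(t,s): -1<t<s\}$ and $\Delta_2=\{(t,s): -s<t<1\}$ that stretches $C_0(]-1,0[)$ and $C_0(]0,1[)$ out to $S$. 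Second, and more seriously, your ``absorption'' step would not terminate where you claim. After cancelling the $H_1$- and $H_2$-summands against the ranges of $q_1$ and $q_2$, what remains is the compression of the cycle to $q_0K\otimes S\otimes_B A_f$; but the compression of the left action $\rho(a)$ to $q_0K=\overline{\eta\cdot B}$ is given by $E_B(a)$, not by left multiplication by $a$, so the residual cycle is not $(A_f\otimes S,\widehat T)$ with the tautological action. Importing the rotation $v_t$ ``verbatim'' does not repair this: in Proposition \ref{sub-K-equivalence} the unitary $\widetilde F$ that intertwines $\widetilde\rho_m$ with $v^*\pi_m(\cdot)v$ is defined on $K_m\oplus A_f$, i.e.\ only after an extra $A_f$-summand has been adjoined, and your sketch neither introduces the corresponding extra $A_f\otimes S$-summand nor verifies the analogous intertwining identities in the suspended setting.

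The paper avoids constructing this homotopy altogether. After the $\Delta$-homotopy it takes as representative of $[\mathrm{Id}_{A_f}]$ the \emph{nontrivial} cycle $(H_m\oplus K_m,\pi_m\oplus\rho_m,F)$ furnished by Proposition \ref{sub-K-equivalence}, checks that the stretched operator $\widetilde G$ is a $\widehat T$-connection (by evaluating at $t=\pm 1$ and using $q_0+q_1+q_2=1$) and that its anticommutator with $F\otimes 1_S$ is positive, and then invokes the Connes--Skandalis characterisation of the Kasparov product; the rotation trick is used only indirectly, through the citation of Proposition \ref{sub-K-equivalence}. To salvage your approach you should either adopt that connection-plus-positivity argument, or else carry out in full the analogue of the Claim of Proposition \ref{sub-K-equivalence} for the suspended modules, including the extra summand and the sign and interval bookkeeping that your sketch currently asserts rather than proves.
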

 
 \begin{proof}
 To prove that we will choose the representant of $[Id_{A_f}]$ that appear in \ref{sub-K-equivalence} and show that its Kasparov product with $b$ is  homotopic to $x\otimes_D [j]$.
 Call $j_l$ for $l=1,2$ the inclusions of $A_l$ in $A_f$ and $j_0=j_1\circ i_1=j_2\circ i_2$ the inclusion of $B$ in $A_f$. First it is obvious that $H\otimes_{j} (A\otimes S)$ is $H_1\otimes_{j_1}A_f\otimes C_0(]-1,0[)\oplus H_2\otimes_{j_2} A_f\otimes C_0(]0,1[)\oplus K\otimes_{j_0} A_f\otimes S$
 which is not quite the same as $(H_1\otimes_{j_1}A_f\oplus H_2\otimes_{j_2} A_f\otimes\oplus K\otimes_{j_0} A_f)\otimes S$. So we will realize now an homotopy to fix that.
 
 \begin{lemma}
 Consider the following two spaces : $\Delta_1=\{(t,s)\in \R^2,  0\leq s\leq 1, \,-1< t<s\}$ and $\Delta_2=\{(t,s)\in \R^2,  0\leq s\leq 1 ,\,-s< t<1\}$. The Hilbert module $\overline H=
 H_1\otimes_{j_1}A_f\otimes C_0(\Delta_1)\oplus H_2\otimes_{j_2} A_f\otimes C_0(\Delta_2)\oplus K\otimes_{j_0} A_f\otimes S\otimes C([0,1])$ is endowed with a natural structure of
 $A_f\otimes S\otimes C([0,1])$ Hilbert module and $A_f$ left action. Moreover the operator 
 $$\overline G= \begin{pmatrix} \widehat{C^-}\otimes 1_{C([0,1])} & 0 & - F_1^*\otimes_{j_1} 1\otimes 1_{\Delta_1} \widehat{S^-}\otimes 1_{C([0,1])}  \\
0 & -\widehat C^+ \otimes 1_{C([0,1])}& F_2^*\otimes_{j_2} 1\otimes 1_{\Delta_2}  \widehat{S^+}\otimes 1_{C([0,1])}\\
-  \widehat{S^-}^*\otimes 1_{C([0,1])} \,F_1\otimes_{j_1} 1\otimes 1_{\Delta_1}  & \widehat{S^+}^*\otimes 1_{C([0,1])}\, F_2\otimes_{j_2} 1 \otimes_{\Delta_2} & \overline Z\\
\end{pmatrix}
$$
with $\overline Z= \widetilde Z\otimes 1_{C([0,1])} $ where $\widetilde Z=-\widehat{C^-} q_1\otimes_{j_0} 1\otimes 1_S+\widehat{ C^+}q_2\otimes_{j_0} 1\otimes 1_S  -\widehat{T} q_0\otimes_{j_0} 1\otimes 1_S$
 makes the pair $(\overline H, \overline G)$ into an element of $KK^1(A_f, A\otimes S\otimes C([0,1]))$ for which the evaluation at $t=0$  is $x\otimes_{D} [j]$ and the evaluation at $t=1$
 has $(H_1\otimes_{j_1}A_f\oplus H_2\otimes_{j_2} A_f\otimes\oplus K\otimes_{j_0} A_f)\otimes S$ as module and
 $\widetilde G=\begin{pmatrix}
 \widehat{C^-} & 0 & - F_1^*\otimes_{j_1} 1\otimes 1_S \widehat{S^-} \\
0 & -\widehat C^+ & F_2^*\otimes_{j_2} 1\otimes 1_S \widehat{S^+}\\
-  \widehat{S^-}^*F_1\otimes_{j_1} 1\otimes 1_S  & \widehat{S^+}^* F_2\otimes_{j_2} 1 & \widetilde Z\\
 \end{pmatrix}$ as operator.\end{lemma}
 
 \begin{proof} As it is a straightforward check, details will be omitted.
 \end{proof}
  
  \noindent Then one easily checks that $\widetilde G$ is an $\widehat{T}\otimes_{A_f} 1$ connection.
 Indeed as $H_1\otimes_{j_1}A_f\oplus H_2\otimes_{j_2} A_f$ is of grading $0$  and $K\otimes {j_0} A_f$ of grading $-1$, one need to check that, when evaluating on 
 $-1$, $\widetilde G$ does the same thing as $\widehat T$ i.e. is the matrix $\begin{pmatrix}-1&0&0\\ 0&-1&0\\ 0&0&1\\ \end{pmatrix}$ and, when evaluating on  $1$, $G$ is the opposite matrix.
 It is indeed the case as $q_1+q_2+q_0=1$.
 
 \vspace{0.2cm}
 
 \noindent Lastly one need the following lemma where the operator $F$ of \ref{sub-K-equivalence} appears.
 
 \begin{lemma}
 The anti-commutator of $\widetilde G$ and $F\otimes 1_S$ is positive.
 
 \end{lemma}
 
 \begin{proof}
 To do that, we will decompose $\widetilde G$ in its diagonal and anti-diagonal part.
 It is clear that $\begin{pmatrix} \widehat{C^-} & 0 & 0 \\
0 & -\widehat C^+ & 0\\
0 & 0 & \widetilde Z\\
\end{pmatrix}$ and $\begin{pmatrix} 0 & 0 &  F_1^*\otimes_{i_1} 1 \otimes 1_S \\
0 & 0 & F_2^*\otimes_{\i_2} 1 \otimes 1_S\\
  F_1\otimes_{i_1} 1 \otimes 1_S &  F_2\otimes_{i_2} 1 \otimes 1_S & 0\\
\end{pmatrix}$ anti-commutes modulo compact operator as we have (modulo compact operator)  $q_1F_1=F_1$ and $q_2F_1= q_0 F_1=0$. On the other hand the anti-commutator with the anti-diagonal part is
 $$\begin{pmatrix}
 -2 (F_1^*F_1)\otimes_{j_1} 1\otimes 1_S \widehat{S^-} &0&0\\
 0& 2 (F_2^*F_2)\otimes_{j_2} 1\otimes 1_S \widehat{S^+}&0\\
 0&0&-2 q_1\otimes_{j_0} 1\otimes 1_S\widehat{S^-} +2 q_2 \otimes_{j_2} 1\otimes 1_S \widehat{S^+} \\
 \end{pmatrix}$$
 As $-S^-$ and $S^+$ are positive functions and $q_1$ and $q_2$ commutes, the previous matrix is a diagonal matrix of positive operators hence positive.
 \end{proof}
 
 \noindent Using Connes- Skandalis characterization of the Kasparov product, we have established that $\widetilde G$ is  a representant of the  Fredholm operator for the product
 $ ( 1_{A_f}\otimes b)\otimes_{A_f\otimes S} ([Id_{A_f}]\otimes 1_S)$.  Our proposition is henceforth proven.
 \end{proof}
 
 \noindent We need now the following two lemmas to get some information about $[j]\otimes_{A_f\otimes S} (x\otimes 1_S)$ as an element of $KK^1(D,D\otimes S)$.
 
 \begin{lemma}\label{lemmaev0}
 Call $ev_0$ the morphism from $D$ to $B$ that evaluates a function at $0$.
 Then we have in $KK^1(D, B\otimes S)$ that $[j]\otimes_{A_f\otimes S} ( (x\otimes_D [ev_0])\otimes 1_S)= - [ev_0]\otimes_B(1_B\otimes b)$.
  \end{lemma}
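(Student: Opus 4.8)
The plan is to compute $x\otimes_D[ev_0]$ explicitly, plug it into $[j]\otimes_{A_f\otimes S}(\,\cdot\otimes 1_S)$, and deform the resulting cycle onto $-[ev_0]\otimes_B(1_B\otimes b)$. First, since $[ev_0]$ is the class of a $*$-homomorphism, the product $x\otimes_D[ev_0]$ is obtained by restricting the cycle $(H,G)$ along $ev_0$, i.e. by forming $H\otimes_{ev_0}B$ and $G\otimes_{ev_0}1$. Because $ev_0\circ\kappa_1=ev_0\circ\kappa_2=0$ (functions in $C_0(]-1,0[;A_1)$ and $C_0(]0,1[;A_2)$ vanish at $0$) the first two summands of $H$ disappear, while $(K\otimes S)\otimes_{\kappa_0}D\otimes_{ev_0}B\cong K$. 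In $Z$ the multipliers $\widehat{C^-},\widehat{C^+},\widehat{T}$ are replaced by their values at $0$, and as $C^-(0)=C^+(0)=1$ and $T(0)=0$ we get $G\otimes_{ev_0}1=q_2-q_1$ on $K$. Hence $x\otimes_D[ev_0]=[(K,\rho,q_2-q_1)]\in KK^1(A_f,B)$.

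Next, $[j]$ is also the class of a $*$-homomorphism, so $[j]\otimes_{A_f\otimes S}((x\otimes_D[ev_0])\otimes 1_S)$ is represented by $(K\otimes S,\rho\otimes 1,(q_2-q_1)\otimes 1)$ with the left action restricted along $j$. Writing $K\otimes S\cong C_0(]-1,1[;K)$ over $B\otimes S$, this is the cycle $\mathcal{C}=(\,C_0(]-1,1[;K),\ f\mapsto(s\mapsto\rho(f(s))),\ q_2-q_1\,)$; it is a valid cycle precisely because every $f\in D$ vanishes at $s=\pm1$, so that $\rho(f(s))q_0$ is a norm-continuous $C_0$ family of compact operators (recall $(q_2-q_1)^2-1=-q_0$ and that $q_0$ is compact).

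The heart of the argument is to first correct the operator. The ranges of $q_2-q_1$ and $\widehat{T}q_0$ are orthogonal (here $\widehat{T}$ is multiplication by the coordinate of $]-1,1[$), so $G_r:=(q_2-q_1)-r\,\widehat{T}q_0$, $r\in[0,1]$, is a homotopy of cycles for the fixed action of $\mathcal{C}$: indeed $G_r^2-1=(r^2\widehat{T}^2-1)q_0$ and $[G_r,\rho(f(s))]$ are $q_0$-supported compact operators which vanish at $s=\pm1$ because $f$ does. This connects $\mathcal{C}$ to $\mathcal{C}'=(\,C_0(]-1,1[;K),\ \rho\circ j,\ (q_2-q_1)-\widehat{T}q_0\,)$. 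The sign is essential: $[(q_2-q_1)-\widehat{T}q_0,\rho(a)]=-(1+\widehat{T})[q_0,\rho(a)]$ for $a\in A_1$, and $=(1-\widehat{T})[q_0,\rho(a)]$ for $a\in A_2$; since the $D$-action uses $A_1$ on $]-1,0[$ and $A_2$ on $]0,1[$, and $1+\widehat{T}$ vanishes at $s=-1$ while $1-\widehat{T}$ vanishes at $s=+1$, both these commutators and $(q_2-q_1-\widehat{T}q_0)^2-1=(\widehat{T}^2-1)q_0$ now vanish at $s=\pm1$ regardless of how the action behaves there.

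Consequently the $C_0$-parameter can be collapsed: $\phi_r(f):=(s\mapsto\rho(f(rs)))$, $r$ running from $1$ to $0$, is a homotopy of valid cycles with operator $(q_2-q_1)-\widehat{T}q_0$ (the rescaled action still takes values in $A_1$, $A_2$, $B$ on $]-1,0[$, $]0,1[$, $\{0\}$), ending at the cycle with constant action $f\mapsto\rho(ev_0(f))$. As $ev_0(f)\in B$ commutes exactly with $q_0,q_1,q_2$ and with $\widehat{T}$, both the action and the operator split along $K=(1-q_0)K\oplus q_0K$: on $(1-q_0)K$ the operator is the self-adjoint unitary $q_2-q_1$ commuting exactly with the action — a degenerate, hence null, cycle — and on $q_0K=\eta\cdot B\cong B$ the action is left multiplication by $ev_0(f)$ and the operator is $-\widehat{T}$, which is exactly the cycle representing $-[ev_0]\otimes_B(1_B\otimes b)$. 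This gives the claimed identity. The main obstacle is precisely this third step: one must correct the operator by $-\widehat{T}q_0$, and with that sign, so that the leftover $q_0$-supported commutators decay at the two endpoints dictated by the partition of $]-1,1[$ into the $A_1$- and $A_2$-regions, before the $C_0$-direction can be collapsed and the non-vacuum part of $K$ discarded.
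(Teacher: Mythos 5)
Your argument is correct and follows essentially the same route as the paper's proof: restrict along $ev_0$ to get the cycle $(K\otimes S,\,(\rho\otimes 1_S)\circ j,\,(q_2-q_1)\otimes 1_S)$, perturb the operator by $-\widehat{T}q_0$ (harmless since $\rho(f(s))q_0$ is a $C_0$-family of compacts), collapse the $]-1,1[$-variable by rescaling the argument of $f$ to reach the constant action $\rho\circ ev_0$, and read off $-1_B\otimes b$ on $q_0K\simeq B$ after discarding the degenerate $(1-q_0)$-part. Your explicit sign analysis of the commutators $-(1+\widehat{T})[q_0,\rho(a)]$ and $(1-\widehat{T})[q_0,\rho(a)]$ is exactly the content of the paper's remark that the endpoint evaluations $1-2q_1$ and $2q_2-1$ commute with $\rho(A_1)$ and $\rho(A_2)$ respectively.
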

 
 \begin{proof}
 Let's first describe the left hand side. The Hilbert module is $K\otimes 1_S$ as the module $(H_1\otimes C_0(]-1,0[))\otimes_{\kappa_1} D\otimes_{ev_0} B$ is $0$.
 The left $D$ action is given by $(\rho\otimes 1_S)\circ j$ and the operator is just $(-q_1+q_2)\otimes 1_S$.
 We can replace this operator with $G_0=(-q_1+q_2)\otimes 1_S-\widehat{T}q_0\otimes 1_S$ as for any $f$ in $D$, $(\rho\otimes 1_S)\circ j(f)\,\widehat{T}q_0\otimes 1_S$ is compact.
 Note now  that the evaluation at $-1$  of $G_0$ is $(1-2q_1)$ and at $-1$ is $2q_2-1$.
 It then enables us to do an homotopy.  Consider the pair $(K\otimes S\otimes C([0,1]), G_0\otimes 1_{C([0,1])})$ where the left action of $D$ is defined now for any $f$ in $D$ and
 $k\in C(]-1,1[\times[0,1];K)$ as $(f.k)(t,s)=\rho(f(t(1-s))) k(t,s)$. This is still a Kasparov element as $(G_0^2-1)\otimes 1_{C([0,1])} =(\widehat{(T^2-1)}q_0\otimes 1_S)\otimes 1_{C([0,1])}$ hence compact.
 Also the commutator of the left action with the operator $G_0\otimes 1$ is compact. Indeed, as $q_0$ is compact, it is only necessary to check that the evaluation at $-1$ or $1$ of any
 commutator is $0$. But this is true as $[q_1,\rho(A_1)]=0$ and $[q_2,\rho(A_2)]=0$. Therefore $[j]\otimes_{A_f\otimes S} ( (x\otimes_D [ev_0])\otimes 1_S)$ is homotopic to an element of $KK^1(D,B\otimes S)$ wich is described with the pair $(K\otimes S, G_0)$ where $D$ acts on $K\otimes S$ as the
 constant morphism $\rho\circ ev_0$. So it is $[ev_0]\otimes_B z$ with $z$ an element of $KK^1(B,B\otimes S)$ which is only non trivial on $q_0K\otimes S\simeq B\otimes S$ where $G_0$ acts as $-\widehat{T}$. 
 Thus $z=- 1_B\otimes b$.  
 \end{proof}
 
\noindent Recall that for $l=1,2$, $\kappa_l$ is the inclusion of $A_l\otimes C(]-1,0[)$ in $D$. To be precise we will use $\bar\kappa_l$  for the induced map from $A_l\otimes S$ to $D$ via the isomorphism of
$C(]-1,0[)$ with $S$.
 
 \begin{lemma}\label{lemma-vertex}
 For all $l=1,2$, one has $[j_l]\otimes_{A_f} x=([Id_{A_l}]\otimes b)\otimes_{A_l\otimes S} [\bar\kappa_l]\in KK^1(A_l,D)$.
  \end{lemma}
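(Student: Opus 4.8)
We prove the assertion for $l=1$; the case $l=2$ is entirely symmetric, exchanging the roles of $A_1$ and $A_2$, of the functions $C^-,S^-$ and $C^+,S^+$, of the half-intervals $]-1,0[$ and $]0,1[$, and invoking the second isomorphism of Proposition~\ref{CorVertex-EdgeReduced} wherever the first was used. By definition $[j_1]\otimes_{A_f}x$ is represented by the Kasparov $A_1$–$D$-module $(H,G)$ in which $A_1$ acts through the inclusion $j_1\colon A_1\hookrightarrow A_f$. The plan is to split this module as $M\oplus N$, with $M$ a ``Bott piece'' realizing the right-hand side and $N$ a degenerate remainder.

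Since $B\subset A_1$, Lemma~\ref{LemmaCompactCommutation}(1) gives the \emph{exact} relation $\rho(a)F_1=F_1a$ for $a\in A_1$; taking adjoints and using $F_1F_1^*=q_1\otimes1$ one gets that $q_1$ commutes with $\rho(A_1)$, so that $K$ splits $\rho(A_1)$-equivariantly as $q_1K\oplus(q_0+q_2)K$, and that for $a\in A_1^{\circ}$ the operator $\rho(a)$ sends $q_0K$ into $q_1K$ through a $B$-compact (``rank one'') operator, $q_0$ itself being compact. Since $F_1$ kills $\xi_1A_1\subset H_1$, the submodule $M:=(\xi_1A_1\otimes C_0(]-1,0[))\otimes_{\kappa_1}D$ is invariant under both $A_1$ and $G$, and $G$ acts on $M$ as the central self-adjoint multiplication operator $\widehat{C^-}$. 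Using $\xi_1A_1\cong A_1$ and $C_0(]-1,0[)\cong S$ one identifies $M$ with $(A_1\otimes S)\otimes_{\bar\kappa_1}D$; under this identification $\widehat{C^-}$ and the image of $1_{A_1}\otimes\widehat T$ under $\bar\kappa_1$ are central self-adjoint operators with values in $[-1,1]$ and identical limits at the two endpoints of $]-1,0[$, so the affine path between them is an operator homotopy of Kasparov modules (its square differs from $1$ by an element of $\mathcal{K}_D(M)\cong I_1$). Hence $(M,G|_M)$ represents $([Id_{A_1}]\otimes b)\otimes_{A_1\otimes S}[\bar\kappa_1]$.

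It remains to prove that the complementary summand $N:=H\ominus M$ — which carries $(H_1^{\circ}\otimes C_0(]-1,0[))\otimes_{\kappa_1}D$, all of $(H_2\otimes C_0(]0,1[))\otimes_{\kappa_2}D$, and all of $(K\otimes S)\otimes_{\kappa_0}D$ — together with the restricted $A_1$-action and $G|_N$, is homotopic to a degenerate module, i.e.\ vanishes in $KK^1(A_1,D)$. The mechanism is that over the half-interval $]-1,0[$ the pair $(\widehat{C^-},\widehat{S^-})$ traces a half-turn of the circle from $(-1,0)$ to $(1,0)$ while $F_1$ identifies $H_1^{\circ}\otimes C_0(]-1,0[)$ with the matching $q_1K$-fibre, so this part of $N$ is a contractible cone, and likewise over $]0,1[$ with $(-\widehat{C^+},\widehat{S^+})$ and $F_2$ identifying $H_2^{\circ}$ with the $q_2K$-fibre. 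Collapsing these cone parameters, rescaling $\widehat{C^{\pm}}$ and $\widehat T$ to symmetries where the square of $G|_N$ is not already $1$, and using that the only remaining ``twist'' of the $A_1$-action — the map mixing $q_0K$ into $q_1K$ — is $B$-compact, one arrives at an operator that is an honest symmetry commuting with $A_1$ modulo $\mathcal{K}_D$, i.e.\ a degenerate Kasparov module. Verifying that all these homotopies can be carried out simultaneously and compatibly with the $A_1$-action, whose non-triviality is concentrated in the $B$-compact map $q_0K\to q_1K$ and in the faithful but ``rotated'' action on $H_1^{\circ}$ and $H_2$, is the main technical point; all the compactness invoked is that of the ideals $I_1,I_2$ (each equal to its own algebra of $D$-compact operators as a Hilbert $D$-module) and of the rank-one projection $q_0$. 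Granting this, $[j_l]\otimes_{A_f}x=([Id_{A_l}]\otimes b)\otimes_{A_l\otimes S}[\bar\kappa_l]$.
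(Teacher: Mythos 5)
Your splitting $H=M\oplus N$ with $M=(\xi_1A_1\otimes C_0(]-1,0[))\otimes_{\kappa_1}D$ is legitimate ($\xi_1A_1$ is a complemented $A_1$-invariant submodule of $H_1$ killed by $F_1$, so both the $A_1$-action and $G$ are diagonal with respect to it), and your identification of $(M,G|_M)$ with $([Id_{A_1}]\otimes b)\otimes_{A_1\otimes S}[\bar\kappa_1]$ via the affine homotopy of central functions with boundary values $-1,+1$ is correct. The gap is in the claim that $[N,G|_N]=0$. Decompose $N$ further along $K=q_1K\oplus q_2K\oplus q_0K$. Because $\rho(a)F_1=F_1a$ exactly for $a\in A_1$, the summand $N_1=(H_1^{\circ}\otimes C_0(]-1,0[))\otimes_{\kappa_1}D\oplus(q_1K\otimes S)\otimes_{\kappa_0}D$ is an honestly degenerate Kasparov module (the operator squares to $1$ and commutes exactly with $A_1$); that part of your picture is fine. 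The substance lies in the complementary $A_1$-invariant summand $N_2=(H_2\otimes C_0(]0,1[))\otimes_{\kappa_2}D\oplus((q_0+q_2)K\otimes S)\otimes_{\kappa_0}D$, and your sketch does not treat it correctly. First, for $a\in A_1^{\circ}$ one has $\rho(a)\eta\in K_1^{\circ}\subset q_2K$, so the $A_1$-action mixes $q_0K$ with $q_2K$ while commuting with $q_1$; your assertion that the residual twist is ``the map mixing $q_0K$ into $q_1K$'' is wrong. Second, $F_2$ kills $\xi_2A_2$, so $N_2$ contains the two diagonal pieces $\xi_2A_2\otimes C_0(]0,1[)$ with operator $-\widehat{C^+}$ (limits $-1$ at $0^+$ and $+1$ at $1^-$) and $q_0K\otimes S$ with operator $-\widehat{T}$ (limits $+1$ and $-1$). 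Neither can be ``rescaled to a symmetry'': for self-adjoint operators whose square is $1$ modulo compacts the boundary values are homotopy invariants, and a symmetry over a connected interval has equal boundary values. These two Bott-type pieces have to cancel against each other, and that cancellation is invisible from the compactness of $q_0$ alone.

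What closes the gap is precisely the device the paper uses: the compact perturbations $F_2\rightsquigarrow\overline{F}_2=F_2+\theta_{\eta\otimes 1,\xi_2}$ and $-\widehat{T}q_0\rightsquigarrow\widehat{C^+}q_0$ (compact because $C^++T\in S$ and $q_0$ is rank one). The isometry $\overline{F}_2$ maps $H_2$ onto $(1-q_1)K\otimes_B A_2$ and intertwines the $A_1$-actions exactly by Lemma \ref{LemmaCompactCommutation}(4), after which $G|_{N_2}$ squares to $1$ and commutes exactly with $A_1$, i.e.\ $N_2$ becomes genuinely degenerate. With that insertion your direct-sum route actually yields a shorter argument than the paper's, which performs the same perturbation globally and then shrinks the $t>0$ part by an explicit homotopy over triangular domains before extracting the Bott factor and checking $z'=[Id_{A_1}]$ via $\overline{F}_1$. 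As written, however, the degeneracy of $N$ is asserted rather than proved, and the assertion rests on a misidentification of how the $A_1$-action couples the pieces $q_0K$, $q_2K$ and $\xi_2A_2$.
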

 
 \begin{proof}
 We will do the lemma for $l=1$. The element $[j_1]\otimes_{A_f} x$ as the same module and operator that $x$, the only change is that we only consider a left action of $A_1$.
  We  first perform a compact perturbation of the operator $G$. With the operators $\overline F_l$ defined before  \ref{LemmaCompactCommutation}, consider
  
  $$G_1=\begin{pmatrix} \widehat{C^-} & 0 & - F_1^*\otimes_{\kappa_1} 1 \widehat{S^-} \\
0 & -\widehat C^+ & \overline F_2^*\otimes_{\kappa_2} 1 \widehat{S^+}\\
-  \widehat{S^-}^*F_1\otimes_{\kappa_1} 1  & \widehat{S^+}^* \overline F_2\otimes_{\kappa_2} 1 & \overline Z\\
\end{pmatrix}$$
where $\overline Z = -\widehat{C^-} (q_1\otimes 1_S)\otimes_{\kappa_0} 1+\widehat{ C^+}(1- q_1\otimes 1_S)\otimes_{\kappa_0} 1$.

\noindent As $F_2-\overline F_2$ is compact (see \ref{LemmaCompactCommutation} ) and $\overline Z- Z= \widehat{C^+ +T}(q_0\otimes 1_S)\otimes_{\kappa_0} 1$  is compact as
$C^+ +T$ is in $S$, we get the same element of $KK^1(A_1,D)$. Observe now that when evaluating at any positive $t$, $G_1^2$ is the identity because $\overline F_2$ is an isometry and $\widehat{S^-} F_1\otimes_{\kappa_1} 1$ vanishes and that  for any $t$, $G_1$ commutes exactly with the left action of $A_1$ as $F_1$ and $\overline F_2$ does.
 
 \vspace{0.2cm}
 
 \noindent We will now construct an homotopy to remove the $[0,1[$ part of our module.
 Consider the space $\Delta_3=\{(t,s)\in\R\, 0<s\leq 1,\, 0<t<s\}$ and $\Delta_4=\{(t,s)\in\R\, 0\leq s\leq 1,\, -1<t<s\}$ which are open in $]-1,1[\times [0,1]$.
 Hence we also have a natural imbedding $\delta_4$ of $C_0(\Delta_4;B)$ in $D\otimes C([0,1])$ and $\delta_3$ of $C_0(\Delta_3;A_2)$ in $D\otimes C([0,1])$.
 Then $\widetilde H= (H_1\otimes C_0(]-1,0[))\otimes_{\kappa_1} D\otimes C([0,1])  \oplus (H_2\otimes C_0(\Delta_3[))\otimes_{\delta_3} D\otimes C([0,1])  \oplus (K\otimes C_0(\Delta_4)\otimes_{\delta_4} D\otimes C([0,1])$ is well defined and the pair $(\widetilde H, \widetilde G_1)$ is a Kasparov element in $KK^1(A_1,D\otimes C([0,1]))$. 
  Indeed the only thing to check is whether $ \widetilde G_1^2$ is the identity modulo compact operator as $\widetilde G_1$ has exact commutation with the action of $A_1$.
  But this is true by the previous observation.
  \vspace{0.2cm}
  
  \noindent Therefore $[j_l]\otimes_{A_f} x$ can be represented by the evaluation at $0$ of this Kasparov element. Let's describe it: the module part is
  $(H_1\oplus K\otimes_{i_1} A_1 )\otimes C_0(]-1,0[)\otimes_{\kappa_1} D$ with obvious left $A_1$ action as $(K\otimes C_0(]-1,0[))\otimes_{\kappa_0} D$ is isomorphic to
  $(K\otimes_{i_1}A_1)\otimes C_0(]-1,0[)\otimes_{\kappa_1} D$. With this identification,  the operator is
  $$E_1=\begin{pmatrix}
  \widehat{C^-}  & - F_1^*\otimes 1_{C_0(]-1,0[)} \otimes_{\kappa_1} 1 \widehat{S^-} \\
-  \widehat{S^-}^*F_1\otimes 1_{C_0(]-1,0[)}\otimes_{\kappa_1} 1  &   -\widehat{C^-} (q_1\otimes_{i_1} 1\otimes 1_{C_0(]-1,0[})\otimes_{\kappa_1} 1+(1- q_1 \otimes_{i_1} 1 \otimes1_{C_0(]-1,0[})\otimes_{\kappa_1} 1\\
\end{pmatrix}$$.

\noindent It is then clear, after identifying $C_0(]-1,0[)$ with $S$, that  $[j_l]\otimes_{A_f} x$ is $z\otimes_{A_1} [\bar\kappa_1]$ with $z$ in $KK^1(A_1, A_1\otimes S)$. By recalling that $1-q_1$ commutes with the left action of $A_1$, it is obvious that $z$ is represented by the pair $((H_1\oplus q_1K\otimes_{i_1} A_1)\otimes S, \overline E_1)$
with $\overline E_1=\begin{pmatrix}
  \widehat{C_1}  & - F_1^*\otimes 1_{S_1} \widehat{S_1} \\
-  \widehat{S_1}^*F_1\otimes 1_{S}  &   -\widehat{C_1} (q_1\otimes_{i_1} 1\otimes 1_{S})\\
\end{pmatrix}$
where $C_1$ is the function $\cos(\pi(t/2-1/2))$ and $S_1$ the function $\sin(\pi(t/2-1/2))$.
 \vspace{0.2cm}

\noindent Following the proof of \ref{subequiv}, $z$ is obviously the product $z'\otimes b$ where $z'$ is the element of $KK^0(A_1,A_1)$ given by
 the module $H_1\oplus q_1K\otimes_{i_1} A_1$ with $H_1$ positively graded and the obvious left action of $A_1$ and the operator $\begin{pmatrix}
 0&F_1^*\\ F_1&0\end{pmatrix}$. We will be finish when we prove that $z'$ is $[Id_{A_1}]$.  To do this we represent $z'\oplus -[Id_{A_1}]$ by the module $H_1\oplus q_1K\otimes_{i_1} A_1\oplus q_0K\otimes_{i_1} A_1\simeq
 H_1\oplus (1-q_2) K\otimes_{i_1} A_1$ and the previous operator. But it is a compact perturbation of
 $\begin{pmatrix}
 0&\overline{F_1}^*\\ \overline{F_1}&0\end{pmatrix}$.   This last operator is homotopic via a simple rotation to 
 $\begin{pmatrix}
 \overline{F}_1^* \overline{F}_1&0\\ 0&\overline{F}_1\overline{F}_1^* \end{pmatrix}$ hence trivial as $\overline{F}_1^* \overline{F}_1=1$ and $\overline{F}_1\overline{F}_1^*=1$ modulo compact operators as observed
 before \ref{LemmaCompactCommutation}.
 \end{proof}
 
 \vspace{0.2cm}
 \noindent We are now ready to prove our theorem \ref{bigthm}.
 
 \begin{proof}
 Call $a \in KK^1(S, \mathbb{C})$ the inverse of $b$. The element $y=(1_{A_f}\otimes a)\otimes_{A_f} x$ is an element of $KK^0(A\otimes S, D)$. We claim that this is the inverse of $[j]$. Indeed thanks to \ref{subequiv} we have that 
 $$y\otimes_D [j]=(1_{A_f}\otimes a)\otimes_{A_f} x\otimes_D [j]= (1_{A_f}\otimes a)\otimes_{A_f} (1_{A_f}\otimes b)\otimes_{A_f\otimes S} ([Id_{A_f}]\otimes 1_S).$$
  As $a\otimes_{ \mathbb{C}} b=[Id_S]$ we get that $
 y\otimes_D [j]= (1_A\otimes [Id_S])\otimes_{A_f\otimes S}([Id_{A_f}]\otimes 1_S)$ is $[Id_{A_f\otimes S}]$. To prove the reverse equality, we will need a trick that can be found already in \cite{Pi86}.
  Observe first that for any $l=1,2$ and using \ref{lemma-vertex},
  \begin{eqnarray*}
  [\bar\kappa_l]\otimes_D\otimes [j]\otimes_{A_f\otimes S} y&=&[j\circ \bar\kappa_l]\otimes_{A_f\otimes S} y=([j_l]\otimes 1_S)\otimes_{A_f\otimes S} (1_{A_f}\otimes a)\otimes_{A_f}x\\
  &=&(1_{A_l}\otimes a)\otimes_{A_l}[j_l]\otimes_{A_f}x\\
  &=&(1_{A_l}\otimes a)\otimes_{A_l}(1_{A_l}\otimes b)\otimes_{A_l}([Id_{A_l}]\otimes 1_S)\otimes_{A_l\otimes S} [\bar\kappa_l]\\
  &=& [\bar\kappa_l].
  \end{eqnarray*}
  \noindent We need now to compute $[j]\otimes_{A_f\otimes S} y\otimes_D [ev_0]$. To do this we will use the following lemma.
\begin{lemma}\label{lemma-edge}
In $KK^1(D\otimes S, A\otimes S)$, one has
$([j]\otimes_{A_f\otimes S}(1_{A_f}\otimes a))\otimes 1_S= -(1_D\otimes a)\otimes_D [j].$
\end{lemma}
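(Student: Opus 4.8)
The plan is to reduce this to the purely formal properties of the Kasparov product --- associativity, compatibility of the operation $1_C\otimes(-)$ with the product, and the interchange rule (compatibility of the exterior product with composition, carrying a Koszul sign) --- exploiting that $a$ is a two-sided inverse of $b$. The only genuine content is the bookkeeping of one sign.

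First I would set $w:=[j]\otimes_{A_f\otimes S}(1_{A_f}\otimes a)\in KK^1(D,A_f)$, where (as is implicit throughout the section) $[j]$ is regarded as an element of $KK^0(D,A_f\otimes S)$ via the flip $S\otimes A_f\cong A_f\otimes S$. By definition the left-hand side of the lemma is then $w\otimes 1_S$. Since $b$ is invertible with inverse $a$, the operation $\eta\mapsto\eta\otimes_{A_f\otimes S}(1_{A_f}\otimes a)$ is inverse to $\zeta\mapsto\zeta\otimes_{A_f}(1_{A_f}\otimes b)$ (one only needs that $1_{A_f}\otimes(-)$ is compatible with the Kasparov product, together with $b\otimes_S a=[\id_{\C}]$ and $a\otimes_{\C}b=[\id_S]$). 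In particular $[j]=w\otimes_{A_f}(1_{A_f}\otimes b)$, again up to the flip.

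Next I would substitute this factorisation into the right-hand side and use associativity of the product, writing
$$-(1_D\otimes a)\otimes_D[j]=-\bigl((1_D\otimes a)\otimes_D w\bigr)\otimes_{A_f}(1_{A_f}\otimes b).$$
This is where the sign appears: because $w$ is the Kasparov product of the even class $[j]$ with the odd class $a$, it lies in $KK^1$, so the interchange rule gives $(1_D\otimes a)\otimes_D w=-\,w\otimes a$ as elements of $KK^0(D\otimes S,A_f)$, the Koszul sign being $(-1)^{\deg a\cdot\deg w}=(-1)^{1\cdot 1}=-1$. A second application of the interchange rule --- this time with trivial sign, since $\deg a\cdot\deg[\id_{A_f}]=0$ --- gives $(w\otimes a)\otimes_{A_f}(1_{A_f}\otimes b)=w\otimes(a\otimes_{\C}b)=w\otimes 1_S$. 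Combining the last two displayed identities yields $-(1_D\otimes a)\otimes_D[j]=w\otimes 1_S$, which is precisely the left-hand side, and the lemma follows.

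The only obstacle is the careful handling of the canonical flip isomorphisms ($S\otimes A_f\cong A_f\otimes S$, $A_f\otimes\C\cong A_f$, and so on) and of the signs in the interchange rule. All of these operations are of degree $0$ except for the single transposition of the two degree-one classes $a$ and $w$, and it is exactly this transposition that forces the factor $-1$ in the statement.
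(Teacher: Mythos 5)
Your argument is correct and is essentially the paper's own proof in a slightly different packaging: the paper composes with $1_D\otimes b$ on the left and extracts the sign from the fact that the flip $\Sigma$ of $S\otimes S$ satisfies $[\Sigma]=-[{\rm Id}_{S\otimes S}]$, whereas you factor $[j]=w\otimes_{A_f}(1_{A_f}\otimes b)$ and extract the same sign from the Koszul rule for interchanging the two odd classes $a$ and $w$ --- two formulations of the same fact. Both proofs rest only on $a$ being a two-sided inverse of $b$ together with the graded commutativity of the external Kasparov product, so nothing further is needed.
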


\begin{proof}
Indeed, $$(1_D\otimes b)\otimes_{D\otimes S}([j]\otimes_{A_f\otimes S}(1_{A_f}\otimes a))\otimes 1_S=[j]\otimes_{A_f\otimes S}(1_{A_f}\otimes(1_S\otimes b)\otimes_{S\otimes S}
(a\otimes 1_S)).$$
If $\Sigma$ is the flip automorphism of $S\otimes S$ then clearly $[\Sigma]=-[Id_{S\otimes S}]$ in $KK^0(S\otimes S, S\otimes S)$. 
As a consequence $(1_S\otimes b)\otimes_{S\otimes S}(a\otimes 1_S)=-1_S\otimes ( b\otimes_{\mathbb{C}} a)=-[Id_S]$. 
Hence $$(1_D\otimes b)\otimes_{D\otimes S}([j]\otimes_{A_f\otimes S}(1_{A_f}\otimes a))\otimes 1_S)=-[j].$$ Multiplying both side by $1_D\otimes a$ gives the result.
\end{proof}
\vspace{0.2cm}

\noindent In view of the lemma and \ref{lemmaev0} one has:
\begin{eqnarray*}
  ([j]\otimes_{A_f\otimes S} y\otimes_D [ev_0])\otimes 1_S&=&-(1_D\otimes a)\otimes_D ([j]\otimes_{A_f\otimes S} (x\otimes_D [ev_0])\otimes 1_S)\\
  &=&+ (1_D\otimes a)\otimes_D [ev_0] \otimes_B (1_B\otimes b)\\
  &=&(1_D\otimes a)\otimes_D (1_D\otimes b)\otimes_{D\otimes S}( [ev_0]\otimes 1_S)\\
  &=&[ev_0]\otimes  1_S
  \end{eqnarray*}
\noindent As $-\otimes 1_S$ from $KK(B_1,B_2)$ to $KK(B_1\otimes S, B_2\otimes S)$ is an isomorphism for any $B_1$ and $B_2$, we get
$[j]\otimes_{A_f\otimes S} y\otimes_D [ev_0]=[ev_0]$. Denote now $q= [Id_D]-[j]\otimes_{A_f\otimes S} y$. As $y\otimes_D [j]=[Id_{A_f\otimes S}]$, $q$ is an idempotent in the ring $KK^0(D, D)$.  On the other end, $D$ fits into a short exact sequence 
$$0\rightarrow A_1\otimes S\oplus A_2\otimes S\overset{\bar\kappa_1\oplus \bar\kappa_2}{\longrightarrow} D\overset{ev_0}\longrightarrow B\rightarrow 0.$$
The induced six terms exact sequence for the functor $KK^0(D,-)$ then shows that, as $q\otimes_D [ev_0]=0$, there exist $q_l$ in
$KK^0( D,A_l)$ for $l=1,2$ such that $q=(q_1\oplus q_2)\otimes_{A_1\oplus A_2}([\bar\kappa_1]\oplus [\bar\kappa_2] )$. So $q=q\otimes_D q =(q_1\oplus q_2)\otimes_{A_1\oplus A_2}([\bar\kappa_1]\oplus [\bar\kappa_2] )\otimes_D q=0$.
 because $[\bar\kappa_l]\otimes_D q=0$ for $l=1,2$ as observed before \ref{lemma-edge}.
 Therefore $[Id_D]= [j]\otimes_{A_f\otimes S} y$ and the K-equivalence between $A_f$ and $D$ is established.
 \end{proof}
 
 \noindent We obtain the following immediate corollary.
  \begin{corollary}
 The amalgamated free product of two discrete discrete quantum groups is $K$-amenable if and only if the two initial discrete quantum groups are $K$-amenable.
 \end{corollary}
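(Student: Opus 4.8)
The plan is to pass to the dual C*-algebras and to apply the long exact sequences of Theorem~\ref{bigthm}. Write $\G=\G_1\underset{\mathbb{H}}{*}\G_2$ for the amalgamated free product over a common discrete quantum subgroup $\mathbb{H}$. On the maximal side put $A_k=C_m(\h{\G}_k)$ and $B=C_m(\h{\mathbb{H}})$: the inclusions $\mathbb{H}\leq\G_k$ of discrete quantum groups give unital inclusions $B\subset A_k$ together with canonical conditional expectations $E_k\,:\,A_k\to B$, and by the universal property of the full amalgamated free product $C_m(\h{\G})=A_1\underset{B}{*}A_2=A_f$. On the reduced side put $A_k^r=C_r(\h{\G}_k)$, $B^r=C_r(\h{\mathbb{H}})$ with the Haar-state-preserving conditional expectations $E_k^r\,:\,A_k^r\to B^r$; since the Haar states are faithful these $E_k^r$ are GNS-faithful, so the vertex- and edge-reduced amalgamated free products of $(A_1^r,A_2^r,B^r)$ coincide and are canonically isomorphic to $C_r(\h{\G})$, and by Theorem~\ref{TheoremKequivalence} the canonical surjection $A_1^r\underset{B^r}{*}A_2^r\to C_r(\h{\G})$ is invertible in $\KK$. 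The reduction homomorphisms $\lambda_k\,:\,A_k\to A_k^r$ agree on $B$ with the reduction $\lambda_B\,:\,B\to B^r$, hence by the universal property of $A_f$ they induce a $*$-homomorphism $\mu\,:\,A_f\to A_1^r\underset{B^r}{*}A_2^r$, and the canonical surjection $C_m(\h{\G})\to C_r(\h{\G})$ factors as the surjection $A_1^r\underset{B^r}{*}A_2^r\to C_r(\h{\G})$ composed with $\mu$. Consequently $\G$ is $K$-amenable if and only if $[\mu]$ is invertible in $\KK$.

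First I would apply Theorem~\ref{bigthm} to the triple $(A_1,A_2,B)$ with $(E_1,E_2)$ and to the triple $(A_1^r,A_2^r,B^r)$ with $(E_1^r,E_2^r)$, obtaining the two six-term exact sequences displayed in the introduction, say for the functor $\KK^*(\,\cdot\,,C)$ with $C$ an arbitrary separable C*-algebra. The crucial point is that these sequences are natural with respect to $\mu$: the equivalence of Theorem~\ref{bigthm} is implemented by the \emph{inclusion} $j\,:\,D\hookrightarrow S\ot A_f$, and both this inclusion and the short exact sequence $0\to A_1\ot S\oplus A_2\ot S\to D\to B\to 0$ are manifestly compatible with the reduction maps; therefore $\mu^*$, $\lambda_1^*\oplus\lambda_2^*$ and $\lambda_B^*$ constitute a morphism from the reduced six-term sequence to the maximal one. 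Now assume $\G_1$ and $\G_2$ are $K$-amenable. Since $\mathbb{H}$ is a quantum subgroup of the $K$-amenable discrete quantum group $\G_1$, it is itself $K$-amenable; hence $[\lambda_1]$, $[\lambda_2]$ and $[\lambda_B]$ are $\KK$-equivalences, so $\lambda_1^*\oplus\lambda_2^*$ and $\lambda_B^*$ are isomorphisms in every degree and for every $C$, and the five lemma forces $\mu^*$ to be an isomorphism for every $C$. By Yoneda $[\mu]$ is invertible, i.e.\ $\G$ is $K$-amenable. Conversely, if $\G$ is $K$-amenable then each $\G_k$, being a quantum subgroup of $\G$, is $K$-amenable.

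The homological part above is routine once the exact sequences are in place; the real content lies in the two quantum-group facts used as black boxes, and it is there that I expect any difficulty to be concentrated. The first is that the maximal (resp.\ reduced) C*-algebra of an amalgamated free product of discrete quantum groups is the full (resp.\ reduced, hence vertex-reduced) amalgamated free product of the dual C*-algebras, equipped with the canonical conditional expectations onto $B$ (resp.\ with GNS-faithful Haar-state-preserving ones); this follows from the universal properties of the objects involved and the faithfulness of the Haar states on the reduced algebras. The second is that $K$-amenability of a discrete quantum group passes to its quantum subgroups — the analogue of Cuntz's result for discrete groups — which is proved as in the classical case once one has the compatible unital inclusions $B\subset A_k$, $B^r\subset A_k^r$ together with a conditional expectation; this is the only point where genuine input beyond the present paper is required.
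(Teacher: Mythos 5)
Your proposal is correct and follows essentially the same route as the paper: factor the canonical surjection $C_m(\h{\G})\to C_r(\h{\G})$ through the full amalgamated free product of the reduced algebras, dispose of the second factor by Theorem~\ref{TheoremKequivalence} (vertex $=$ edge reduced since the Haar states are faithful), and dispose of the first by the six-term sequences of Theorem~\ref{bigthm} together with the five lemma. If anything you are more careful than the paper, which tacitly uses that the common quantum subgroup is itself $K$-amenable (needed for the five lemma at the $B$-term); you correctly isolate this as the one external input, and it does hold via the standard restriction argument for the Cuntz-type element in Vergnioux's definition of $K$-amenability.
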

 
 \begin{proof}
One way is obvious.  Let us prove the converse. Let $G_1, G_2, H$ be compact quantum groups and suppose that $\widehat{H}$ is a common discrete quantum subgroup of both $\widehat{G_1},\widehat{G_2}$ and $\widehat{G_k}$ is $K$-amenable for $k=1,2$. Write, for $k=1,2$, $C_m(G_k), C_m(H)$ the full C*-algebras and $C(G_k), C(H)$ the reduced C*-algebra and view $C_m(H)\subset C_m(G_k)$, $C(H)\subset C(G_k)$, for $k=1,2$. Let $\widehat{G}$ be the amalgamated free product discrete quantum group. One has $C_m(G)=C_m(G_1)\underset{C_m(H)}{*}C_m(G_2)$ and $C(G)=C(G_1)\underset{C(H)}{\overset{e}{*}}C(G_2)$, where the edge-reduced amalgamated free product is done with respect to the \textit{faithful} Haar states on $C(G_k)$, for $k=1,2$. Let $\lambda_{G_k}\,:\,C_m(G_k)\rightarrow C(G_k)$ be the canonical surjection. By assumption, $\lambda_{G_k}$ is $K$-invertible  for $k=1,2$. Observe that the canonical surjection $\lambda_G\,:\,C_m(G)\rightarrow C(G)$ is given by $\lambda_G=\pi\circ\lambda$, where $\lambda\,:\, C_m(G_1)\underset{C_m(H)}{*}C_m(G_2)\rightarrow C(G_1)\underset{C(H)}{*}C(G_2)$ is the free product of the maps $\lambda_{G_1}$ and $\lambda_{G_2}$ and $\pi\,:\,C(G_1)\underset{C(H)}{*}C(G_2)\rightarrow C(G_1)\underset{C(H)}{\overset{e}{*}}C(G_2)$ is the canonical quotient map. By theorem \ref{TheoremKequivalence} $\pi$ is $K$-invertible and using the exact sequence of the full free product and the five's lemma, $\lambda$ is $K$-invertible.
 \end{proof}

\noindent
{\small\sc Emmanuel Germain} \\
  {\small LMNO, CNRS UMR 6139, Universit\'e de Caen\\
\em E-mail address: \tt emmanuel.germain@unicaen.fr}

\bigskip

\noindent
{\small \sc Pierre FIMA} \\ \nopagebreak
  {\small Univ Paris Diderot, Sorbonne Paris Cit\'e, IMJ-PRG, UMR 7586, F-75013, Paris, France \\
  Sorbonne Universit\'es, UPMC Paris 06, UMR 7586, IMJ-PRG, F-75005, Paris, France \\
  CNRS, UMR 7586, IMJ-PRG, F-75005, Paris, France \\
\em E-mail address: \tt pierre.fima@imj-prg.fr}

\end{document}